\newcommand{\bigO}{\mathcal{O}}
\newcommand{\I}{{\rm i}}
\newcommand{\pp}{\mathbb{P}}
\newcommand{\ee}{\mathbb{E}}
\newcommand{\rr}{\mathbb{R}}
\newcommand{\nn}{\mathbb{N}}
\newcommand{\zz}{\mathbb{Z}}
\newcommand{\p}{\partial}
\newcommand{\de}{\delta}
\newcommand{\De}{\Delta}
\newcommand{\uno}[1]{\mathbf{1}_{#1}}
\newcommand{\ep}{\varepsilon}
\newcommand{\wt}{\widetilde}
\newcommand{\wh}{\widehat}
\newcommand{\qqand}{\qquad\text{and}\qquad}
\newcommand{\inv}[1]{\frac{1}{#1}}
\newcommand{\itwopii}[1]{\frac{1}{(2\pi\I)^{#1}}}
\renewcommand{\d}{\mathrm{d}}
\let\Re\relax
\DeclareMathOperator{\Re}{Re}
\newcommand{\eq}{\begin{equation}}
\newcommand{\eeq}{\end{equation}}
\newcommand{\equ}{{\rm eq}}
\newcommand{\Zeq}{Z^\equ}
\newcommand{\cz}{\mathcal{Z}}
\newcommand{\mcal}{\mathcal{M}}
\newcommand{\ts}{\hspace{0.1em}}
\newcommand{\tts}{\hspace{0.05em}}
\newcommand{\tsm}{\hspace{-0.1em}}
\newcommand{\aip}{\mathcal{A}}
\newcommand{\SM}{\mathcal{S}}
\newcommand{\SN}{\bar{\mathcal{S}}}
\renewcommand{\P}{\chi}
\newcommand{\fK}{\mathbf{K}}
\newcommand{\fI}{\mathbf{I}}
\newcommand{\fT}{\mathbf{S}}
\newcommand{\fA}{\mathbf{A}}
\newcommand{\fB}{\mathbf{B}}
\newcommand{\fD}{\mathbf{D}}
\newcommand{\fL}{\mathbf{L}}
\newcommand{\fR}{\mathbf{R}}
\newcommand{\fH}{H}
\newcommand{\tfB}{\widetilde{\mathbf{B}}}
\newcommand{\fh}{\mathfrak{h}}
\newcommand{\ff}{\mathfrak{f}}
\newcommand{\fX}{\mathfrak{X}}
\newcommand{\uptext}[1]{\text{\upshape{#1}}}
\newcommand{\<}{\langle\,}
\renewcommand{\>}{\,\rangle}
\DeclareMathOperator{\Ai}{Ai}
\DeclareMathOperator{\epi}{\uptext{epi}}
\DeclareMathOperator{\hypo}{\uptext{hypo}}
\DeclareMathOperator{\UC}{\uptext{UC}}
\newtheorem{prop}{Proposition}
\newtheorem{thm}{Theorem}
\newtheorem{cor}{Corollary}
\newtheorem{conj}{Conjecture}
\theoremstyle{remark}
\newtheorem{rmk}{Remark}
\newlength{\bibitemsep}\setlength{\bibitemsep}{.2\baselineskip plus .05\baselineskip minus .05\baselineskip}
\newlength{\bibparskip}\setlength{\bibparskip}{1pt}
\let\oldthebibliography\thebibliography
\renewcommand\thebibliography[1]{%
  \oldthebibliography{#1}%
  \setlength{\parskip}{\bibitemsep}%
  \setlength{\itemsep}{\bibparskip}%
}
\title{Airy process with wanderers, KPZ fluctuations, and a deformation of the Tracy--Widom GOE distribution}
\author{Karl Liechty \thanks{Department of Mathematical Sciences, DePaul University, Chicago, IL, 60614 USA \href{mailto:kliechty@depaul.edu}{\nolinkurl{kliechty@depaul.edu}}.} \and Gia Bao Nguyen \thanks{Department of Mathematics, KTH Royal Institute of Technology, SE-100 44 Stockholm, Sweden
\href{mailto:nguyengb@kth.se}{\nolinkurl{nguyengb@kth.se}}.}
\and Daniel Remenik \thanks{Departamento de Ingenier\'ia Matem\'atica and Centro de Modelamiento Matem\'atico (UMI-CNRS 2807), Universidad de Chile \href{mailto:dremenik@dim.uchile.cl}{\nolinkurl{dremenik@dim.uchile.cl}}.}}
\begin{document}
\maketitle

\begin{abstract}
We study the distribution of the supremum of the Airy process with $m$ wanderers minus a parabola, or equivalently the limit of the rescaled maximal height of a system of $N$ non-intersecting Brownian bridges as $N\to\infty$, where the first $N-m$ paths start and end at the origin and the remaining $m$ go between arbitrary positions.
The distribution provides a $2m$-parameter deformation of the Tracy--Widom GOE distribution, which is recovered in the limit corresponding to all Brownian paths starting and ending at the origin.

We provide several descriptions of this distribution function: (i) A Fredholm determinant formula; (ii) A formula in terms of Painlev\'e II functions; (iii) A representation as a marginal of the KPZ fixed point with initial data given as the top path in a stationary system of reflected Brownian motions with drift; (iv) A characterization as the solution of a version of the Bloemendal--Virag PDE \cite{Bloemendal-Virag11, Bloemendal-Virag16} for spiked Tracy--Widom distributions; \linebreak (v) A representation as a solution of the KdV equation.
We also discuss connections with a model of last passage percolation with boundary sources.
\end{abstract}

\section{Introduction}

The Tracy--Widom (TW) GOE, GUE, and GSE distributions describe the rescaled location of the largest eigenvalue in self-adjoint random matrix ensembles with real, complex, and quaternion entries, respectively, as the size of the matrix approaches infinity.  Each of these distributions may be described in terms of a certain solution to the Painlev\'{e} II equation, or equivalently in terms of a Fredholm determinant or determinants of an operator involving Airy functions. The Painlev\'{e} II equation (PII) is the second order nonlinear ODE
\eq\label{eq:PII}
q''(s) = 2q(s)^3+sq(s),
\eeq
and we consider the {\it Hastings--McLeod solution} to this equation, which behaves as 
\eq\label{eq:HM}
q(s) \sim \Ai(s), \qquad  \textrm{ as }\qquad s\to+\infty,
\eeq
where $\Ai$ is the Airy function.
 Define the functions
\eq\label{eq:PIIfunctions}
v(s) = \int_s^\infty q(x)^2\,dx, \quad E(s) = \exp\!\left(\tsm-\frac{1}{2} \int_s^\infty q(x)\,dx\right), \quad F(s) = \exp\!\left(\tsm-\frac{1}{2} \int_s^\infty v(x)\,dx\right).
\eeq
Then the distributions for the Tracy--Widom GOE, GUE, and GSE distribution functions, denoted as $F_1$, $F_2$, and $F_4$, respectively, are given as 
\eq\label{eq:TWdistr1}
F_1(s) = F(s)E(s), \qquad F_2(s) = F(s)^2, \qquad F_4(2^{-2/3}s) = \frac{1}{2}\!\left(E(s)+\frac{1}{E(s)}\right)F(s).
\eeq
For the description in terms of Fredholm determinant(s), let $\fB_s$ be the integral operator acting in $L^2([0,\infty))$ with the kernel $\fB_s(x,y)$ defined by
\eq\label{eq:Bs}
\fB_s(x,y) \coloneqq \Ai(x+y+s).
\eeq
Then the Tracy--Widom distribution functions are also given as
\eq\label{eq:TWdistr2}
F_1(s) = \det(\fI-\fB_s), \quad F_2(s) =  \det(\fI-\fB_s^2), \quad F_4(2^{-2/3}s) = \frac{1}{2}\left( \det(\fI-\fB_s)+ \det(\fI+\fB_s)\right),
\eeq
with all determinants in $L^2([0,\infty))$.

The Tracy--Widom distributions were originally discovered in the context of random matrix theory \cite{Tracy-Widom94, Tracy-Widom96}, but have since become ubiquitous in random systems with a high degree of correlation. In particular, the Tracy--Widom GUE and GOE distributions appear as one-point distributions in random growth models in the Kardar--Parisi--Zhang (KPZ) universality class with narrow-wedge (GUE) and flat (GOE) initial conditions; the GSE case arises similarly in half-space KPZ models. For narrow-wedge initial data the multi-point fluctuations for KPZ models are described by the {\it Airy$_2$ process}. This coincides with the limiting process at the edge of Dyson Brownian motion for complex Hermitian matrices, a very natural dynamic version of the Gaussian Unitary Ensemble (GUE).

Beyond the Tracy--Widom GO/U/SE distributions and the Airy$_2$ process, the relationship between random matrix theory and KPZ models becomes a bit more tenuous. One might expect, for instance, that the multi-point fluctuations for KPZ models with flat initial data coincide with the limiting process at the edge of Dyson Brownian motion for real symmetric matrices, but this is not so \cite{Bornemann-Ferrari-Prahofer08}. The former process, known as the {\it Airy$_1$ process}, is a central object in KPZ models, which has TW-GOE marginals but does not seem to appear at all in random matrix models.
Instead, the TW-GOE distribution is connected to the Airy$_2$ process through the well known formula \cite{Corwin-Quastel-Remenik13,Johansson03}
\eq\label{eq:F1_identity}
\pp\!\left(\sup_{t\in \rr} (\aip(t) - t^2) \le x\right) = F_1(2^{2/3} x),
\eeq
where $\aip$ is the Airy$_2$ process, a particular case of a general variational formula satisfied by KPZ models \cite{Corwin-Liu-Wang16,Dauvergne-Ortmann-Virag18,Matetski-Quastel-Remenik17,Quastel-Remenik14}. This identity implies that the maximum of Dyson's Brownian motion minus a parabola, or equivalently an ensemble of non-intersecting Brownian bridges, is described by the Tracy--Widom GOE distribution in the appropriate scaling limit.

In this paper we introduce a natural deformation of the TW-GOE distribution which appears both in the setting of random matrices/non-intersecting Brownian motions and as a scaling limit of KPZ models. On the random matrix side we arrive at our deformation by deforming the left hand side of \eqref{eq:F1_identity}, replacing the Airy$_2$ process with the {\it Airy process with wanderers}; this is a deformation of the Airy$_2$ process introduced in \cite{Adler-Ferrari-van_Moerbeke10} which arises as the scaling limit of systems $N$ non-intersecting Brownian bridges in which most particles are conditioned to return start at and return to a common point, but a few are conditioned to start and end elsewhere. 

On the KPZ side this deformed TW-GOE distribution will arise as a marginal of the KPZ fixed point (the universal process, constructed in \cite{Matetski-Quastel-Remenik17}, conjectured to govern the asymptotic fluctuations of all models in the class) with initial data constructed as a certain random deformation of the flat initial conditions; concretely, one may think of it as coming from the totally asymmetric simple exclusion process, one of the paradigmatic models in the KPZ class, started from a random perturbation of the periodic initial conditions.
The two perspectives lead to several alternative descriptions of our distribution: as the Fredholm determinant of a finite rank perturbation of the kernel $\fB_s$ from \eqref{eq:Bs}, as a formula given in terms of a Lax pair for the Hastings--McLeod solution of PII, and as a solution of two different PDEs.

We remark that the Airy process with wanderers also appears as the scaling limit of the last passage times in a certain directed last passage percolation (LPP) model with boundary sources \cite{Borodin-Peche08}.
As a consequence, the supremum of the Airy process with wanderers minus a parabola, which is the primary object studied in this paper, is related to the point-to-line last passage time for this model.
The LPP perspective is discussed in Section \ref{sec:cts}, where we also comment on connections with recent results obtained in \cite{FitzGerald-Warren20}.

\section{Main results}

\subsection{Non-intersecting Brownian bridges and Airy processes with wanderers}\label{sec:nonint}

Consider $N$ non-intersecting Brownian bridges $(B_1(t),\dotsc,B_N(t))$ on the time interval $[-1,1]$, labeled so that $B_1(t)\leq B_2(t)\leq\dotsm\leq B_N(t)$, such that the first $N-m$ paths start and end at $0$, while the $m$ remaining top {\it outlier paths} go from a set of $m$ locations $\alpha_1\geq\dotsm\geq\alpha_m\geq0$ to another set of locations $\beta_1\geq\dotsm\geq\beta_m\geq0$.
When $\alpha_i=\beta_i=0$ for $i=1,\dotsc,m$, the system has a limit shape bounded by the ellipse $\mathcal{C}\coloneqq\{(t,\pm\sqrt{2N(1-t^2)})\!:t\in[-1,1]\}$ and the fluctuations around the top boundary are described by the Airy line ensemble minus a parabolic shift, the top line of which is precisely the Airy$_2$ process $\aip$ which we mentioned in the introduction. In fact, since the top paths reach a maximal height of about $\sqrt{2N}$ due to the conditioning on non-intersection, the same fluctuation process arises whenever the $\alpha_i$'s and $\beta_i$'s stay bounded as $N\to\infty$.

In \cite{Adler-Ferrari-van_Moerbeke10} it was shown that a new fluctuation process, the \emph{Airy process with $m$ wanderers}, arises if one scales the starting and ending locations of the top $m$ non-intersecting Brownian bridges critically around $\sqrt{2N}$.
The correct scaling corresponds to choosing
\begin{equation}\label{eq:ab-scaling}
\alpha_i=\sqrt{2N}\!\left(1-\frac{a_i}{N^{1/3}}\right)\qqand\beta_i=\sqrt{2N}\!\left(1-\frac{b_i}{N^{1/3}}\right)
\end{equation}
for fixed choices of $a_1\leq a_2\leq\dotsm\leq a_m$ and $b_1\leq b_2\leq\dotsm\leq b_m$ satisfying additionally $-a_1\leq b_1$; for later convenience, we have changed the sign of $a_i$ in our scaling compared to \cite{Adler-Ferrari-van_Moerbeke10}.
The Airy process with wanderers, which we denote as $\aip^{(\vec a,\vec b)}_m$, then occurs at a spatial scale of order $x-\sqrt{2N}=\bigO(N^{-1/6})$, which is much smaller than the scaling of the starting/ending points \eqref{eq:ab-scaling}, which is $\alpha_i-\sqrt{2N} = \bigO(N^{1/6})$.
Thus the condition $-a_1<b_1$ ensures that the top path, and thus all the $m$ wanderers, interact  non-trivially with the cloud of $N-m$ particles, giving rise to the new fluctuation process; otherwise, the wanderers don't feel the effect of the bulk, and the fluctuations become Gaussian, see Figure \ref{fig:wand} below as well as Figure 1 in \cite{Adler-Ferrari-van_Moerbeke10}.
The precise statement is the following: under the scaling \eqref{eq:ab-scaling} with $a_1\leq a_2\leq\dotsm\leq a_m$ and $b_1\leq b_2\leq\dotsm\leq b_m$ and $-a_1<b_1$, the top Brownian path $(B_N(t))_{t\in[-1,1]}$ in the system introduced above satisfies
\begin{equation}\label{eq:wm-to-aw}
\sqrt{2}N^{1/6}\left(B_N(N^{-1/3}t)-\sqrt{2N}\right)\xrightarrow[N\to\infty]{}\aip^{(\vec a,\vec b)}_m(t)
\end{equation}
in distribution, uniformly over $t$ in compact subsets of $\rr$.
The convergence of the finite-dimensional distributions follows from \cite[Thm. 1.2]{Adler-Ferrari-van_Moerbeke10}, which in fact proves much more, namely the joint convergence of the gap probabilities for any fixed number of paths at the top of the system to those of a limiting (extended) point process\footnote{While this limiting point process is what \cite{Adler-Ferrari-van_Moerbeke10} calls the Airy process with wanderers, throughout this paper we use always use this term mean the scaling limit of the top path; this is similar  to how the Airy$_2$ process names both the scaling limit of the top path in Dyson Brownian motion and the determinantal point process arising from scaling the GUE eigenvalues at the edge.
}.
The upgrade to uniform convergence on compact sets, and in fact the existence of a continuous process corresponding to the scaling limit of the top path, follows from the results of \cite{Corwin-Hammond11}.

The limiting process $\aip^{(\vec{a},\vec{b})}_{m}$ can be defined through its finite dimensional distributions\footnote{Recall we have changed the sign of the $a_i$'s compared to \cite{Adler-Ferrari-van_Moerbeke10}.}: for $t_1<t_2<\ldots<t_n$,
\begin{equation}\label{eq:awextdeter}
\pp\!\left(\aip^{(\vec{a},\vec{b})}_{m}(t_j)\leq r_j,\,j=1,\dotsc,n\right)=\det\!\left(\fI-\P_r\fK^{(\tilde{a},\tilde{b})}_{m}\P_r\right)_{L^2(\{t_1,\ldots,t_n\}\times\rr)}
\end{equation}
where for a fixed vector $r\in\rr^n$ we set
\begin{equation}\label{eq:defChis}
\P_r(t_j,x)=\uno{x>r_j}\qqand\bar\P_r(t_j,x)=\uno{x\leq r_j},
\end{equation}
which we also regard as multiplication operators acting on the space $L^2(\{t_1,\dotsc,t_m\}\times\rr)$, and where the extended kernel $\fK^{(\tilde{a},\tilde{b})}_{m}$ is defined as
\begin{align}\label{eq:awextkernel}
 \fK^{(\vec{a},\vec{b})}_{m}&(s,x;t,y)=-\tfrac{1}{\sqrt{4\pi(t-s)}}e^{-(y-x)^2/(4(t-s))-(t-s)(x+y)/2+(t-s)^3/12}\uno{s<t}\\
 &+\frac{1}{(2\pi\I)^2}\int_{\Gamma_{\<\vec b-s}}du\int_{\Gamma_{-\vec a-t\>}}dv \frac{e^{u^3/3-xu}}{e^{v^3/3-yv}}\frac{1}{(u+s)-(v+t)}\prod_{k=1}^m\frac{u+a_k+s}{v+a_k+t}\frac{v-b_k+t}{u-b_k+s}.
\end{align}
The integration contours are as follows: $\Gamma_{\<\vec{b}-s}$ goes from $e^{-\pi\I/3}\infty$ to $e^{\pi\I/3}\infty$ and passes to the left of each ${b}_i-s$, $\Gamma_{\vec{a}-t\>}$ goes from $e^{-2\pi\I/3}\infty$ to $e^{2\pi\I/3}\infty$ and passes to the right of each $a_i-t$, and they are such that the shifted contours $\Gamma_{\<\vec{b}-s}+s$ and $\Gamma_{\vec{a}-t\>}+t$ do not intersect.

Note finally the kernel $\fK^{(\tilde{a},\tilde{b})}_{m}$ makes perfect sense if some of the $a_i$'s or the $b_i$'s are set to $\infty$: all that happens is that the corresponding factors in the integrand on the right hand side of \eqref{eq:awextdeter} disappear.
Physically, in view of the scaling \eqref{eq:ab-scaling}, one expects that this should recover the case where the corresponding $\alpha_i$'s or $\beta_i$'s equal $0$.
This is indeed the case, and can be derived without additional difficulty by repeating the arguments of \cite{Adler-Ferrari-van_Moerbeke10} with the endpoints of the corresponding Brownian paths tied to the origin.
In particular, taking both $a_m$ and $b_m$ to $\infty$ in $\aip^{((a_1,\dotsc,a_m),(b_1,\dotsc,b_m))}_m$ one simply recovers the Airy process with $m-1$ wanderers $\aip^{((a_1,\dotsc,a_{m-1}),(b_1,\dotsc,b_{m-1}))}_{m-1}$, while if all $a_i$'s are taken to $\infty$ and all $b_i$'s are set at a common location $b\in(0,\infty)$ one recovers the particular case studied in \cite{Adler-Delepine-van_Moerbeke09}.

\begin{figure}
\centering
\footnotesize
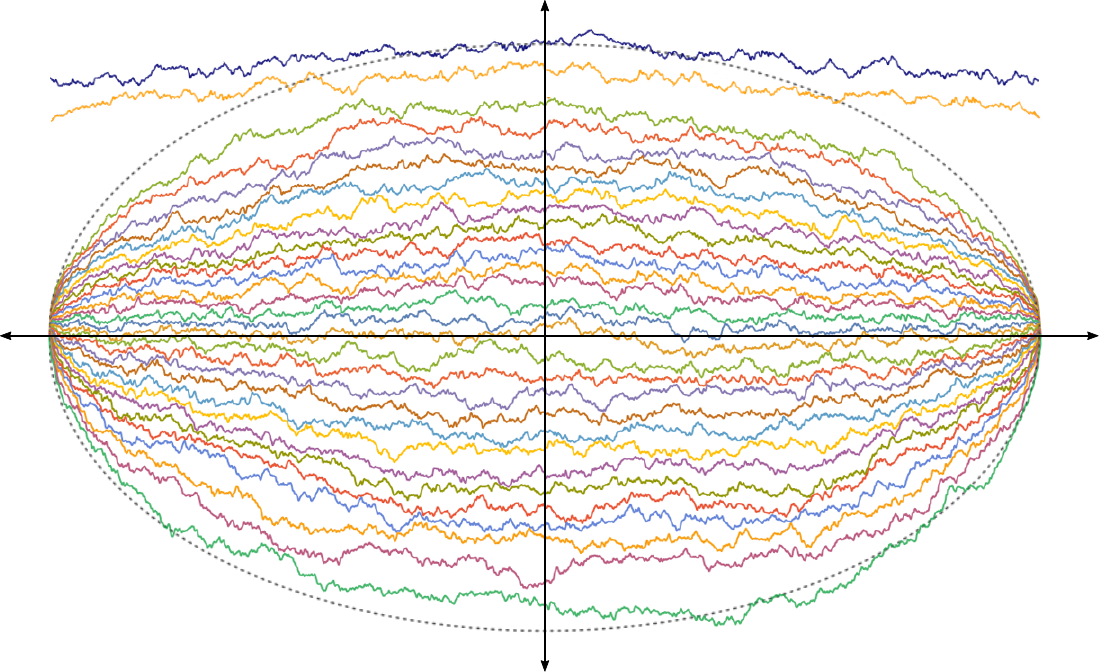
\caption{\small A system of non-intersecting Brownian bridges with $2$ outliers going from $(\alpha_1,\alpha_2)$ to $(\beta_1,\beta_2)$ with scalings as in \eqref{eq:ab-scaling}.
Here the $a_i$'s and $b_i$'s are positive (so the condition $-a_1<b_1$ holds) and thus the outlier paths enter the region defined by the ellipse $\mathcal{C}$ and hence they interact nontrivially with the paths in the bulk.}
\label{fig:wand}
\end{figure}

The primary object of interest in the current paper is the distribution of the supremum of the Airy process with wanderers minus a parabola:
\eq\label{eq:supwm-to-supai}
\begin{aligned}
F^{(\vec a,\vec b)}_m(r)&=\pp\Big(\sup_{t\in \rr}\big(\aip^{(\vec a,\vec b)}_{m}(t)-t^2\big)\leq r\Big)\\
&=\lim_{N\to\infty}\pp\Big(\sqrt{2}N^{1/6}\!\left({\textstyle\sup_{t\in[-1,1]}}B_N(t)-\sqrt{2N}\right)\leq r\Big),
\end{aligned}
\eeq
where the second equality comes from \eqref{eq:wm-to-aw} (which only gives convergence over $t$ in compact sets, but from the arguments of \cite{Corwin-Hammond11}, see Corollary 5.2 there, it follows that the probability that the maximum is attained outside of a given box $[-M,M]$ goes to $0$ as $M\to\infty$)\footnote{Alternatively, our proof of \eqref{eq:Fab-det} can be repeated for the maximal height of a finite system of non-intersecting Brownian bridges as in \cite{Nguyen-Remenik17}, and the resulting Fredholm determinant can be checked to converge to the right hand side of \eqref{eq:Fab-det} in the right scaling.}. 
In terms of LPP with boundary sources as in \cite{Borodin-Peche08}, one expects that $F^{(\vec a,\vec b)}_m$ is the distribution of the asymptotic fluctuations of the corresponding point-to-line last passage times. 

From the preceding discussion one should expect that 
\begin{equation}\label{eq:limits1}
\lim_{a_1,b_1,\dotsc,a_m,b_m\to +\infty}F^{(\vec a,\vec b)}_m(r)=F_1(2^{2/3}r).
\end{equation}
On the other hand, in view of \eqref{eq:supwm-to-supai} and the scaling introduced above, one should expect that $\sup_{t\in \rr}\big(\aip^{(\vec a,\vec b)}_{m}(t)-t^2\big)=\infty$ if any of the $a_i$'s or $b_i$'s are negative, and in particular that
\begin{equation}\label{eq:limits2}
\lim_{a_1\to 0^+ \ \textrm{or} \ b_1 \to 0^+}F^{(\vec a,\vec b)}_m(r)=0.
\end{equation}
In fact, if that is the case then one (or both) of the endpoints of the top path lies at a location greater than or equal than $\sqrt{2N}$, and the supremum of the path near that endpoint does not feel the bulk and has fluctuations around $\sqrt{2N}$ which are of order $1$.
In view of this, we will assume in the sequel that all $a_i$'s and $b_i$'s are positive (which in particular implies the required condition $-a_1<b_1$).

We will prove in Section \ref{sec:limittrans} that both limits \eqref{eq:limits1} and \eqref{eq:limits2} hold. 

\subsection{Fredholm determinant formula for the maximal height}\label{sec:det}

Our first result is an explicit Fredholm determinant formula for $F^{(\vec a,\vec b)}_m$.

\begin{thm} \label{thm:DetFormula}
Consider the Airy process with $m$ wanderers $\aip^{(\vec{a},\vec{b})}_{m}$ with parameters satisfying $0<a_1\leq\dotsm\leq a_m$ and $0<b_1\leq\dotsm\leq b_m$.
Then
\eq\label{eq:Fab-det}
F^{(\vec a,\vec b)}_m(r) = \det(\fI-\P_0\tfB_{2^{2/3}r}^{(\vec{a},\vec{b})}\P_0)_{L^2(\rr)},
\eeq
where we recall, $\P_0(x)=\uno{x>0}$, and 
where $\tfB_{2^{2/3}r}^{(\vec{a},\vec{b})}$ is the integral operator with kernel
\begin{equation}\label{eq:Bab}
\tfB^{(\vec a,\vec b)}_{2^{2/3}r}(x,y)=\frac1{2\pi\I}\int_{\langle}dw\,e^{2w^3/3-(x+y+2r)w}\prod_{k=1}^m\frac{a_k+w}{a_k-w}\frac{b_k+w}{b_k-w}.
\end{equation}
Here the contour $\langle$ goes from $e^{-\pi\I/3}\infty$ to $e^{\pi\I/3}\infty$ and passes to the left of all $a_i$'s and all $b_i$'s.
\end{thm}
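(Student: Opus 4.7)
The plan is to adapt the path-integral strategy of \cite{Corwin-Quastel-Remenik13} (which identifies $F_1(2^{2/3}r) = \det(\fI-\fB_r)$) to the Airy-wanderer setting, following the general framework also used in \cite{Nguyen-Remenik17} at the finite-$N$ level. I would first express, for finite $L>0$, the probability
$$G_L(r) \coloneqq \pp\Bigl(\aip^{(\vec a,\vec b)}_m(t) \leq r + t^2 \text{ for all } t \in [-L, L]\Bigr)$$
as a single-time Fredholm determinant on $L^2(\rr)$, and then send $L\to\infty$.

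The first step is to start from \eqref{eq:awextdeter} with time lattice $-L=t_1<\dotsm<t_n=L$ and heights $r_j=r+t_j^2$, refine the partition, and apply the path-integral lemma for extended determinantal kernels whose diagonal term is a Markov semigroup (e.g.\ Proposition~3.4 of \cite{Corwin-Quastel-Remenik13}). Since the first summand in \eqref{eq:awextkernel} is (after absorbing the factor $e^{-(t-s)(x+y)/2+(t-s)^3/12}$ through a Gaussian conjugation) essentially a Brownian transition density, the lemma yields
$$G_L(r) = \det\Bigl(\fI - K^{(\vec a,\vec b)}_{-L} + K^{(\vec a,\vec b)}_{-L}\,\Theta^{r,L}\Bigr)_{L^2(\rr)},$$
where $K^{(\vec a,\vec b)}_{-L}(x,y)=\fK^{(\vec a,\vec b)}_m(-L,x;-L,y)$ and $\Theta^{r,L}$ is the explicit operator describing a Brownian bridge from $(-L,x)$ to $(L,y)$ that stays above the parabola $s\mapsto r+s^2$.

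Next I would send $L\to\infty$. On the probabilistic side, the Corwin--Hammond estimates (Cor.~5.2 in \cite{Corwin-Hammond11}) ensure $G_L(r)\to F^{(\vec a,\vec b)}_m(r)$. On the operator side, $\Theta^{r,L}$ can be evaluated by the reflection principle after the Gaussian change of variables, and then composed with $K^{(\vec a,\vec b)}_{-L}$. In this limit, the double contour integral of \eqref{eq:awextkernel} collapses to a single contour by completing the square in the $v$-variable; the $L$-dependent rational prefactors $\prod_k\tfrac{u+a_k-L}{v+a_k-L}\cdot\tfrac{v-b_k-L}{u-b_k-L}$, although each tends pointwise to $1$, combine nontrivially with linear-in-$L$ terms from the Brownian Gaussian exponent to produce the finite factor $\prod_k\tfrac{a_k+w}{a_k-w}\cdot\tfrac{b_k+w}{b_k-w}$ in the resulting single-variable integral. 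The coefficients $2w^3/3$ and $2r$ in the exponent arise because the time interval has effective length $2L$; a final rescaling $w\mapsto 2^{-1/3}w$ and the fact that the reflection computation in $\Theta^{r,L}$ restricts the integration variable to $(0,\infty)$ deliver the claimed formula \eqref{eq:Fab-det}, with the projection $\P_0$ accounting for the restriction to positive arguments.

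The hard part will be this last simplification: extracting a finite limit from the rational prefactors whose poles drift to $\pm\infty$ is delicate and requires careful saddle-point-style bookkeeping of the cancellations against the Brownian Gaussian. Throughout, the contours $\Gamma_{\<\vec b-s}$ and $\Gamma_{-\vec a-t\>}$ must be kept on the correct sides of their moving poles, and the super-exponential decay of the Airy-type integrand must be leveraged to upgrade pointwise convergence to trace-norm convergence of $K^{(\vec a,\vec b)}_{-L}\,\Theta^{r,L}$, and hence of the Fredholm determinants. By contrast, the path-integral identity of step one is essentially bookkeeping using the Markov property of the diagonal term in \eqref{eq:awextkernel}.
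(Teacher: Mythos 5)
Your overall route---the continuum-statistics/path-integral formula for the extended kernel, a reflection-principle evaluation of the boundary operator for the parabola, and then $L\to\infty$---is the same as the paper's, but the step you yourself flag as ``the hard part'' is exactly where the proposal has a genuine gap, and the mechanism you propose for it is not the one that works. You plan to extract the limit of the composition of $\Theta^{r,L}$ with the time-$(-L)$ kernel by saddle-point analysis of the double contour integral in \eqref{eq:awextkernel}, claiming that the rational prefactors $\prod_k\frac{u+a_k-L}{v+a_k-L}\frac{v-b_k-L}{u-b_k-L}$, whose poles drift with $L$, ``combine nontrivially with linear-in-$L$ Gaussian terms'' to produce $\prod_k\frac{a_k+w}{a_k-w}\frac{b_k+w}{b_k-w}$; no computation is offered, and this asymptotic cancellation is never needed in the actual argument. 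The missing idea is the exact factorization \eqref{eq:KAB}, $\fK^{(\vec a,\vec b)}_{t}=e^{t\fH}\fB^{(\vec a,\vec b)}_{m,0}\P_0\fA^{(\vec a,\vec b)}_{m,0}e^{-t\fH}$, together with the shift relations \eqref{eq:etHAB}: these are (i) what verifies the hypotheses of the path-integral lemma in the first place (your ``essentially bookkeeping'' step is not automatic for this non-self-adjoint extended kernel), and (ii) what, after a cyclic rearrangement, puts all the $L$-dependence into the semigroups $e^{\pm L\fH}$ while the rational factors sit at parameter $0$. Combined with the identity $\fR^{(r)}_{[-L,L]}=e^{-L\fH}\varrho_r e^{-L\fH}$, the main term $\fA^{(\vec a,\vec b)}_{m,0}\varrho_r\fB^{(\vec a,\vec b)}_{m,0}$ is \emph{exactly} independent of $L$, and the only limit that must actually be taken is that the remainder $\Omega_L$, sandwiched between $\P_0\fA^{(\vec a,\vec b)}_{m,0}e^{L\fH}$ and $e^{L\fH}\fB^{(\vec a,\vec b)}_{m,0}\P_0$, vanishes in trace norm (an adaptation of \cite[Lem.\ 1.2]{Corwin-Quastel-Remenik13}, where one checks the rational factors cause no harm). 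Without these identities, your plan reduces to an unproven claim about delicate cancellations.

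Several of your specific assertions about how \eqref{eq:Bab} emerges are also incorrect, which indicates the limit has not actually been traced through: the factor $e^{2w^3/3}$ and the shift $2r$ do not come from the time interval having ``effective length $2L$,'' but from composing $\fA^{(\vec a,\vec b)}_{m,0}$ (integrand $e^{-v^3/3+(\cdot)v}$) with $\fB^{(\vec a,\vec b)}_{m,0}$ (integrand $e^{u^3/3-(\cdot)u}$) through the reflection $\varrho_r f(x)=f(2r-x)$, the intermediate integral producing $\delta_0(u+v)$; there is no final rescaling $w\mapsto 2^{-1/3}w$ in \eqref{eq:Bab} (that rescaling appears only in the comparison \eqref{eq:Btildeinfty} for the degenerate case); and the projection $\P_0$ in \eqref{eq:Fab-det} originates from the factorization of the diagonal kernel through $\int_0^\infty d\lambda$ in \eqref{eq:intAB}, not from the reflection computation, which instead produces projections $\bar\P_{r+L^2}$ that end up inside the error term. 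Finally, in your single-time determinant the boundary operator must enter as $\Theta^{r,L}e^{2L\fH}K^{(\vec a,\vec b)}_{-L}$ with the connecting semigroup, as in \eqref{eq:becont}; writing the composition without it is not correct.
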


The result is proved in Section \ref{sec:DetFormula}. 
The right hand side of \eqref{eq:Fab-det} can also be written as the Fredholm determinant of a finite rank perturbation of a scaled version of the kernel $\fB_{2^{2/3}r}$ from \eqref{eq:Bs}, see Proposition \ref{prop:finiterank-B} in Section \ref{sec:PII} below.

From the symmetry of the kernel in \eqref{eq:Bab} it follows directly that the supremum of the Airy process with $m$ wanderers with parameters $0<a_1\leq\dotsm\leq a_m$ and $0<b_1\leq\dotsm\leq b_m$ has the same distribution as the one with $2m$ wanderers, with starting points set to $\infty$ and the new endpoints given by the $a_i$'s and the $b_i$'s:

\begin{cor}
For any $0<a_1\leq\dotsm\leq a_m$ and $0<b_1\leq\dotsm\leq b_m$,
\begin{equation}\label{eq:absymm}
F^{(\vec a,\vec b)}_m(r)=F^{(\infty,\langle \vec a,\vec b\rangle)}_{2m}(r)
\end{equation}
with $\langle \vec a,\vec b\rangle$ representing the non-decreasing ordering of the $2m$ parameters.
\end{cor}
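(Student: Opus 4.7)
The plan is to apply Theorem \ref{thm:DetFormula} to both sides of \eqref{eq:absymm} and observe that the resulting kernels coincide identically. First, I would invoke Theorem \ref{thm:DetFormula} for the left hand side to write
\begin{equation}
F^{(\vec a,\vec b)}_m(r) = \det(\fI-\P_0\tfB_{2^{2/3}r}^{(\vec{a},\vec{b})}\P_0)_{L^2(\rr)},
\end{equation}
with $\tfB^{(\vec a,\vec b)}_{2^{2/3}r}$ given by \eqref{eq:Bab}.

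Next, I would apply Theorem \ref{thm:DetFormula} to the right hand side, i.e., to the Airy process with $2m$ wanderers whose $\vec a$-parameters are all set to $+\infty$ and whose $\vec b$-parameters are the $2m$ numbers $\vec c\coloneqq\langle\vec a,\vec b\rangle$. As noted in the paragraph following the definition of $\fK^{(\vec a,\vec b)}_m$ in Section \ref{sec:nonint}, sending any parameter to $+\infty$ simply removes the corresponding factor from the integrand, and the same holds at the level of the reduced kernel $\tfB$: indeed, $\frac{a+w}{a-w}\to 1$ uniformly on the contour $\langle$ as $a\to\infty$ (the contour can be chosen within a bounded distance of the imaginary axis, and the cubic factor $e^{2w^3/3}$ provides the decay needed for dominated convergence). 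Hence
\begin{equation}
\tfB^{(\vec\infty,\vec c)}_{2^{2/3}r}(x,y) = \frac{1}{2\pi\I}\int_{\langle}dw\,e^{2w^3/3-(x+y+2r)w}\prod_{k=1}^{2m}\frac{c_k+w}{c_k-w}.
\end{equation}

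The key observation is then the trivial but crucial fact that the product $\prod_{k=1}^m\frac{a_k+w}{a_k-w}\frac{b_k+w}{b_k-w}$ treats the $a_k$'s and the $b_k$'s as a single multiset of $2m$ poles, so it equals $\prod_{k=1}^{2m}\frac{c_k+w}{c_k-w}$ whenever $\vec c=\langle\vec a,\vec b\rangle$. The contour $\langle$ in both kernels is characterized as lying to the left of all poles, so the two choices of contour can be taken identical. Consequently the kernels $\tfB^{(\vec a,\vec b)}_{2^{2/3}r}$ and $\tfB^{(\vec\infty,\vec c)}_{2^{2/3}r}$ agree pointwise, and the two Fredholm determinants are literally the same, yielding \eqref{eq:absymm}.

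There is no real obstacle here; the only mild subtlety is justifying the limit $a_i\to\infty$ inside the contour integral defining $\tfB$ (needed so that we may legitimately apply Theorem \ref{thm:DetFormula} with some parameters set to $\infty$), but this is immediate from the Gaussian-type decay of $e^{2w^3/3}$ along $\langle$. The statement then drops out with no further computation.
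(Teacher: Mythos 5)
Your proposal is correct and is essentially the paper's own argument: the corollary is stated there as "immediate from Theorem \ref{thm:DetFormula}," precisely because the kernel \eqref{eq:Bab} depends on the $a_k$'s and $b_k$'s only through the single multiset of $2m$ poles, so the two Fredholm determinants coincide. The only minor remark is that the validity of the determinant formula with parameters set to $\infty$ rests on the paper's extension of the wanderer kernel to that case (obtained by repeating the arguments of \cite{Adler-Ferrari-van_Moerbeke10} with the corresponding endpoints tied to the origin) rather than on a dominated-convergence limit of the kernel alone, but this does not affect the conclusion.
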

While immediate from Theorem \ref{thm:DetFormula}, this fact is far from obvious from the perspective of its physical motivation in terms of non-intersecting Brownian bridges (or in terms of LPP with boundary sources, see Section \ref{sec:cts}).

As expected, when all the $a_i$ or $b_i$ parameters are set to $\infty$ we recover the case where all Brownian bridges start and end at the origin.
In fact, the kernel in \eqref{eq:Bab} becomes 
\begin{align}\label{eq:Btildeinfty}
\tfB_{2^{2/3}r}(x,y)&\coloneqq\tfB^{(\infty,\infty)}_{2^{2/3}r}(x,y)=2^{-1/3}\frac1{2\pi\I}\int_{\langle}d w\,e^{2w^3/3-(x+y+2r)w}\\
&=2^{-1/3}\Ai(2^{-1/3}(x+y+2r))=2^{-1/3}\fB_{2^{2/3}r}(2^{-1/3}x,2^{-1/3}y),
\end{align}
where we used the contour integral formula for the Airy function 
\begin{equation}
 \Ai(x)=\frac1{2\pi\I}\int_{\langle}d w\,e^{w^3/3-xw}.\label{eq:airy}
\end{equation}
After rescaling the kernel the right hand side of \eqref{eq:Fab-det} becomes $\det(\fI-\P_0\fB_{2^{2/3}r}\P_0)$, which in view of \eqref{eq:TWdistr2}, matches \eqref{eq:F1_identity}.

Taking the parameters to $0$ in the Fredholm determinant formula is much subtler.
For simplicity, consider the case $m=1$ with $a_1=\infty$ and $b_1=b$.
The kernel in this case becomes
\[\tfB^{(\infty,b)}_{2^{2/3}r}(x,y)=\frac1{2\pi\I}\int_{\langle}dw\,e^{2w^3/3-(x+y+2r)w}\frac{b+w}{b-w}\]
where we recall the contour passes to the left of $b$.
One may be tempted to simply take $b\to0$ inside the kernel, but this leads to $-2^{-1/3}\Ai(2^{-1/3}(x+y+2r))$ which does not give the right answer\footnote {In fact, it leads to $\det(\fI+\P_0\fB_{2^{2/3}r}\P_0)=F_2(2^{2/3} r)/F_1(2^{2/3}r)$, which is not even a distribution function!}. The problem is that the convergence of the kernel holds pointwise, but not in trace norm.
To get the right answer we proceed formally as follows.
First we deform the contour $\langle$ to the imaginary axis and rescale $x\longmapsto x/b$ and $y\longmapsto y/b$ in the Fredholm determinant and $w\longmapsto bw$ in the contour integral appearing inside the kernel to get the formula $F^{(\infty,b)}_1(r)=\det(\fI-\P_0\widehat\fB_b\P_0)$ with $\widehat\fB_b(x,y)=\frac1{2\pi\I}\int_{\I\rr}\ts dw\,e^{2b^3w^3/3-(x+y)w}\frac{1+w}{1-w}$.
Taking $b\to0$ inside the kernel we get $\widehat\fB_0(x,y)=\frac1{2\pi\I}\int_{\I\rr}\ts dw\,e^{-(x+y)w}\frac{1+w}{1-w}$.
Now let $f(z)=e^{-z}$.
An easy computation shows that $\widehat\fB_0f(z)=\frac1{2\pi\I}\int_{\I\rr}\ts dw\,e^{-zw}\frac{1}{1-w}$, and a Fourier transform calculation shows that the right hand side equals $e^{-z}\uno{z\geq0}$.
Hence $f$ is an eigenfunction of $\widehat\fB_0$ with eigenvalue $1$, so that that $F^{(\infty,0)}_1(r)=\det(\fI-\P_0\widehat\fB_0\P_0)=0$, matching \eqref{eq:limits2}.

\subsection{Painlev\'{e} II formula}

We turn next to the PII expression for $F^{(\vec a,\vec b)}_m$.
In addition to the Hastings--McLeod solution to PII denoted $q$ and defined in \eqref{eq:PII} and \eqref{eq:HM}, we need to introduce two special functions, $f(x,w)$ and $g(x,w)$, defined as certain solutions to the following linear differential equations:
\begin{align}
\frac{\p}{\p w}\tsm\begin{pmatrix} f \\ g\end{pmatrix}& = \begin{pmatrix} q(x)^2 & -wq(x)-q'(x) \\ -wq(x)+q'(x) & w^2 - x - q(x)^2 \end{pmatrix}\begin{pmatrix} f \\ g\end{pmatrix} \label{eq:Lax1},\\
\frac{\p}{\p x}\tsm\begin{pmatrix} f \\ g\end{pmatrix}& = \begin{pmatrix} 0 & q(x) \\ q(x) & -w \end{pmatrix}\begin{pmatrix} f \\ g\end{pmatrix}. \label{eq:Lax2}
\end{align}
These equations comprise a  {\it Lax pair} for PII, meaning that the compatibility of the above equations implies that $q(x)$ satisfies PII. We let $f(x,w)$ and $g(x,w)$ be the solution to \eqref{eq:Lax1} satisfying the initial condition (where $F(x)$ was defined in \eqref{eq:PIIfunctions}) 
\eq\label{eq:IC}
\begin{pmatrix} f(x,0) \\ g(x,0)\end{pmatrix} = F(x)\!\begin{pmatrix} 1 \\ 1\end{pmatrix}.
\eeq

We present the PII formulas for $F^{(\vec a,\vec b)}_m$ only for the case $a_1=\dots=a_m=\infty$; the formula for arbitrary $a_i$'s is identical in light of the symmetry \eqref{eq:absymm} between the $a_i$'s and the $b_i$'s.

\begin{thm}\label{thm:PII}
Consider the Airy process with $m$ wanderers $\aip^{(\infty,\vec{b})}_{m}$ with $0<b_1<\dotsm<b_m$. 
We have the formulas
\begin{align}
F^{(\infty,\vec b)}_m(r) &=\frac{ F_1(2^{2/3} r)}{\prod_{1\le j<k\le m} (b_k-b_j)} \det\!\left[\left(b_j+D_r\right)^{k-1}\left(f(2^{2/3} r, 2^{1/3} b_j) - g(2^{2/3} r, 2^{1/3} b_j)\right)\right]_{j,k=1}^m\quad\label{eq:PII1}\\
&=\frac{ F_1(2^{2/3} r)}{\prod_{1\le j<k\le m} (b_k-b_j)} \det\!\left[b_j^{k-1}\left(f(2^{2/3} r, 2^{1/3} b_j) +(-1)^{k} g(2^{2/3} r, 2^{1/3} b_j)\right)\right]_{j,k=1}^m,\label{eq:PII2}
\end{align}
where $D_r$ denotes partial derivative with respect to $r$.
When some of the $b_i$'s coincide the two formulas still hold after using l'H\^opital's rule to compute the right hand side as a limit.
\end{thm}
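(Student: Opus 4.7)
The plan is to begin from the Fredholm determinant formula of Theorem~\ref{thm:DetFormula} specialized to $a_1=\dotsm=a_m=\infty$, where the symbol of the kernel in \eqref{eq:Bab} reduces to $\prod_{k=1}^m(b_k+w)/(b_k-w)$. Since this symbol tends to $(-1)^m$ at infinity and has simple poles only at $w=b_k$, a partial fraction decomposition writes it as $(-1)^m+\sum_k c_k(\vec b)/(b_k-w)$, where the coefficients $c_k(\vec b)$ are the Vandermonde-type residues. Using $1/(b_k-w)=\int_0^\infty e^{(w-b_k)\xi}\,d\xi$ (valid on the contour $\langle$, which lies to the left of $b_k$), one exhibits $\tfB^{(\infty,\vec b)}_{2^{2/3}r}$ as an explicit finite-rank perturbation of the rescaled kernel $\tfB_{2^{2/3}r}$. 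This is the content alluded to in Proposition~\ref{prop:finiterank-B}, and the Weinstein--Aronszajn identity then turns the Fredholm determinant into
\[
F^{(\infty,\vec b)}_m(r)=\det(\fI-\P_0\tfB_{2^{2/3}r}\P_0)\,\det\!\bigl[M_{jk}\bigr]_{j,k=1}^m,
\]
where $M_{jk}$ are inner products of the rank-one vectors with the resolvent $(\fI-\P_0\tfB_{2^{2/3}r}\P_0)^{-1}$ applied to exponential-type source functions indexed by the $b_j$'s. By \eqref{eq:Btildeinfty} and \eqref{eq:TWdistr2}, the first factor is already $F_1(2^{2/3}r)$.

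The heart of the proof is then to identify the resolvent inner products $M_{jk}$ with the Lax pair functions $f(2^{2/3}r,2^{1/3}b_j)$ and $g(2^{2/3}r,2^{1/3}b_j)$. The strategy is to show that, as a function of $(r,w)$, the action of the Airy-type resolvent on the exponential $e^{-2^{1/3}b\cdot}$ satisfies precisely the Lax system \eqref{eq:Lax1}--\eqref{eq:Lax2} with the boundary condition \eqref{eq:IC} at $w=0$; uniqueness of solutions to this linear system then forces the identification. This computation is classical in spirit (traceable to Tracy--Widom's original derivation of the PII representations of $F_1,F_2$): the $w$-dependence is generated by differentiating under the integral and using the integro-differential relations satisfied by the Hastings--McLeod $q$, while the initial datum $f(x,0)=g(x,0)=F(x)$ matches the well-known formula for $(\fI-\P_0\tfB_x\P_0)^{-1}$ applied to the constant eigenfunction. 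Once this step is in place, formula \eqref{eq:PII2} follows by absorbing the Vandermonde factor $\prod_{j<k}(b_k-b_j)^{-1}$ coming from the partial-fraction coefficients $c_k(\vec b)$ and reorganizing the columns via row/column operations; the alternating sign $(-1)^k$ in column $k$ encodes the fact that even- and odd-order residue contributions enter with opposite parities.

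The equivalence of \eqref{eq:PII1} and \eqref{eq:PII2} is then purely algebraic: the $x$-part of the Lax pair \eqref{eq:Lax2} implies that $D_r$, acting on $f(2^{2/3}r,2^{1/3}b_j)-g(2^{2/3}r,2^{1/3}b_j)$, produces a linear combination of $f$ and $g$ with coefficients that are polynomials in $b_j$, so each column $k$ of the matrix in \eqref{eq:PII1} is obtained from the first column by applying a polynomial in $b_j+D_r$; expanding $(b_j+D_r)^{k-1}$ in the basis $\{b_j^{\ell}\}_{\ell=0}^{k-1}$ and using multilinearity of the determinant converts \eqref{eq:PII1} into \eqref{eq:PII2} up to admissible column operations. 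The coincident-$b_i$ case is handled by continuity: both sides are real-analytic in the $b_i$'s, so the equality extends by a standard l'Hôpital limiting argument on the right-hand side. The main obstacle is, as indicated, the second step: rigorously matching the Airy resolvent's action on exponentials with the Lax pair solutions $f,g$, which requires a careful analysis of integral representations and a verification that the sign $(-1)^m$ from the partial-fraction constant combines cleanly with the rank-$m$ piece so that the prefactor is exactly $F_1(2^{2/3}r)$, rather than some other ratio of Tracy--Widom distribution functions.
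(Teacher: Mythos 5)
Your high-level target (Fredholm determinant $\to$ $F_1\times$ an $m\times m$ determinant of resolvent quantities $\to$ identification with the Lax-pair functions) is the same as the paper's, but the mechanism you propose for the first reduction does not work. A partial-fraction decomposition of the symbol, $\prod_k\frac{b_k+w}{b_k-w}=(-1)^m+\sum_k\frac{c_k}{b_k-w}$, does \emph{not} exhibit $\tfB^{(\infty,\vec b)}_{2^{2/3}r}$ as a finite-rank perturbation of $\tfB_{2^{2/3}r}$: each term $c_k/(b_k-w)$ contributes a kernel depending only on $x+y$ (a Hankel operator, essentially $\int_0^\infty e^{-b_k\xi}\,2^{-1/3}\Ai(2^{-1/3}(x+y+2r-\xi))\,d\xi$), which has infinite rank, so Weinstein--Aronszajn does not apply and no $m\times m$ determinant with prefactor $\det(\fI-\P_0\tfB_{2^{2/3}r}\P_0)$ comes out of this decomposition. (Two further problems: the representation $1/(b_k-w)=\int_0^\infty e^{(w-b_k)\xi}\,d\xi$ fails on the tails of the contour $\langle$, where $\Re w\to+\infty$, and no contour admissible for $e^{2w^3/3}$ keeps $\Re w$ bounded above; and the constant term is $(-1)^m$, so for odd $m$ your ``background'' determinant would be $\det(\fI+\P_0\tfB_{2^{2/3}r}\P_0)=F_2(2^{2/3}r)/F_1(2^{2/3}r)$ rather than $F_1(2^{2/3}r)$ --- exactly the trap noted in the footnote of Section \ref{sec:det} --- and you give no argument for how this sign is absorbed.)

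The finite-rank structure in the paper comes from a different source, and this is the work your sketch skips: the identities $\fB^{b_1\dotsm b_{k+1}}_{b_1\dotsm b_j}=(b_{k+1}-\fD)\fB^{b_1\dotsm b_k}_{b_1\dotsm b_j}$, cyclicity of the determinant, and the integration-by-parts identity $\fB\fD+\fD\fB=-\fB\,\delta_0\otimes\delta_0$ are iterated to peel off one genuine rank-one term per wanderer (Proposition \ref{prop:finiterank-B}), which is what produces the correct prefactor $F_1(2^{2/3}r)$ and entries $\prod_{i\le j}(b_i-\fD)(\fI-\fB)^{-1}\psi_{b_1\dotsm b_k}(0)$. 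Two further nontrivial steps are then needed and are absent from your plan: the Vandermonde/LU reduction of Proposition \ref{prop:finiterank-B-2} bringing the entries to $b_k^{j-1}-\fD^{j-1}(\fI-\fB)^{-1}\psi_{b_k}(0)$, and the Tracy--Widom-style commutator identity \eqref{eq:bbsHb} converting these, up to row operations, into $(b_k+D_r)^{j-1}\bigl(1-(\fI-\fB)^{-1}\psi_{b_k}(0)\bigr)$. Finally, the identification with the Lax-pair functions is not re-derived in the paper from \eqref{eq:Lax1}--\eqref{eq:IC}; it is quoted from Baik's formulas, giving $f-g=1-(\fI-\fB)^{-1}\psi_b(0)$. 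Your proposal to prove this from scratch via uniqueness for the Lax system is a substantial separate project, and in any case the object to be identified is the resolvent applied to $\psi_b$ (an Airy-type function with rational symbol), not to a pure exponential. The one part of your argument that is essentially correct and matches the paper is the passage from \eqref{eq:PII1} to \eqref{eq:PII2} via \eqref{eq:Lax2} and elementary column operations, and the analytic-continuation remark for coincident $b_i$'s.
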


Our proof of formula \eqref{eq:PII1} will be based on direct manipulations of the Fredholm determinant formula from Theorem \ref{thm:DetFormula}, and is presented in Section \ref{sec:PII}. 
Then \eqref{eq:PII2} follows from \eqref{eq:PII1} by using \eqref{eq:Lax2} and elementary row operations.
For $m=1$ and $m=2$, there is an alternative derivation based on using the Karlin--McGregor formula for non-intersecting Brownian excursions and discrete orthogonal polynomials to derive an explicit formula for their maximal height.
This second approach is the one that originally led us to these formulas; we sketch it in Appendix \ref{BEandOP}.

For the special case $m=1$ \eqref{eq:PII1} becomes
\begin{equation}\label{def_dist}
F^{(\infty,b)}_m(r)  = F_1(2^{2/3} r)(f(2^{2/3} r, 2^{1/3} b) - g(2^{2/3} r, 2^{1/3} b)).
\end{equation}
Since $f(x,w)\to 1$ and $g(x,w)\to 0$ as $w\to\infty$ \cite{Baik-Rains00,Baik-Rains01a}, it is clear that the right hand side of \eqref{def_dist} approaches $F_1(2^{2/3} r)$ as $b\to\infty$ for fixed $r\in\rr$ while, since $f(x,0) = g(x,0)$, the right hand side of \eqref{def_dist} vanishes for $b=0$.
This recovers the expected behavior stated in \eqref{eq:limits1} and \eqref{eq:limits2}.
While one could attempt to generalize the argument to $m>1$, we will generalize the proof instead using the connection with the KPZ fixed point discussed next (see Section \ref{sec:limittrans}).

\subsection{Connection with KPZ fluctuations}\label{sec:KPZfluct}

The one dimensional KPZ universality class consists of a broad collection of random growth models, last passage percolation and directed polymers, and random stirred fluids.
The name of the class comes from the Kardar--Parisi--Zhang SPDE $\partial_t h = \lambda(\partial_xh)^2  + \nu \partial_x^2h + \sigma \xi$ with $\xi$ a space-time white noise, a canonical continuum equation for random growth introduced in \cite{Kardar-Parisi-Zhang86}.
An analogue of a height function $h(t,x)$ can be associated to every model in the class, and the main goal of the subject is to study the long time, large scale fluctuations of $h$.
The conjecture is that for every model in the KPZ universality class the height function converges to a universal limit $\fh(t,x)$ under the 1:2:3 scaling corresponding to letting $\ep\to0$ in
\begin{equation}\label{eq:123sc}
c_1\ep^{1/2} h(c_2\ep^{-3/2} t, c_3\ep^{-1} x) - C_\ep t,
\end{equation}
for some model-dependent constants $c_1$, $c_2$, $c_3$ and $C_\ep$.
This universal process $\fh(t,x)$ is known as the \emph{KPZ fixed point}, and was constructed in \cite{Matetski-Quastel-Remenik17} as the limit of the 1:2:3 rescaled height function for a specific model in the class, the totally asymmetric simple exclusion process (TASEP); later work shows that it arises too from other models related to TASEP \cite{Dauvergne-Ortmann-Virag18,Matetski-Quastel-Remenik20,Nica-Quastel-Remenik19,Nica-Quastel-Remenik20} as well as from the KPZ equation itself \cite{Quastel-Sarkar20,Virag20}.
The Airy$_2$ and Airy$_1$ processes mentioned in the introduction correspond to the KPZ fixed point at time $t=1$ in the case of two special choices of initial data: narrow-wedge (meaning $\fh(0,x)$ equal to $0$ for $x=0$ and $-\infty$ for all other $x$) for Airy$_2$; and flat (meaning $\fh(0,x)=0$ for all $x$) for Airy$_1$, see \cite{Matetski-Quastel-Remenik17}.

To be more precise, and since it will play a role in our proofs, let us introduce briefly the \emph{TASEP height function} $(h_t(x))_{x\in\zz}$ and its convergence to the KPZ fixed point.
For each fixed $t$ the height function $h_t$ is a simple random walk path, i.e. $h_t(x)-h_t(x-1)\in\{-1,1\}$ for each $x\in\zz$; the global height is fixed by imposing $h_0(0)=0$.
The dynamics of the TASEP height function is that local maxima become local minima independently at rate 1; i.e. if $h_t(x)=h_t(x\pm1)+1$ then the transition $h_t(x)\mapsto h_t(x)-2$ occurs at rate $1$ independently for different $x$'s, the rest of the height function remaining unchanged.
The \emph{TASEP particle system} is simply the discrete derivative of the height function: letting $\hat\eta_t(x)=h_t(x+1)-h_t(x)$ and thinking of $\hat\eta_t(x)=1$ as there being a particle at $x$ at time $t$ and $\hat\eta_t(x)=-1$ as the site being empty, the above dynamics correspond to particles jumping to the right at rate $1$ independently but subject to the exclusion rule that jumps onto occupied sites are forbidden.

Introduce the space $\UC$ of upper semi-continuous functions $\fh\!:\rr\longrightarrow\rr\cup\{-\infty\}$ satisfying $\fh(x)\le A|x| + B$ for some $A,B<\infty$, with the topology of local Hausdorff convergence of hypographs.
It was proved in \cite{Matetski-Quastel-Remenik17} that if $\fh_0$ is a possibly random element of $\UC$ and $\ep^{1/2}h_0(2\ep^{-1}x)\longrightarrow\fh_0(x)$ as $\ep\to0$, then for the TASEP height function it holds that
\begin{equation}\label{eq:TASEPtoFP}
\ep^{1/2}\big(h_{2\ep^{-3/2}t}(2\ep^{-1}x)+\ep^{-3/2}t\big)\xrightarrow[\ep\to0]{}\fh(t,x;\fh_0),
\end{equation}
all in distribution in $\UC$.
The limit $\big(\fh(t,\cdot;\fh_0)\big)_{t\geq0}$ evolves as a Markov process taking values in $\UC$, and $\fh(t,x;\fh_0)$ denotes the state of the process at time $t$ given its initial state $\fh_0$.
The transition probabilities for this Markov process can be expressed through a Fredholm determinant formula, see \cite[Defn. 3.12]{Matetski-Quastel-Remenik17}.

The main result presented in this section relates the maximal height of the Airy process with wanderers minus a parabola with the distribution of the KPZ fixed point at time $t=1$ for a particular choice of initial data, which we describe next.
In view of \eqref{eq:absymm} again, we state everything in terms of the case where all $a_i$'s equal infinity.
Fix $m\geq1$ and $0<b_1\leq\dotsm\leq b_m$, and introduce a system of $m$ reflected Brownian motions with drift with a wall at the origin $0\leq Z^0_1(t)\leq Z^0_2(t)\leq\dotsm\leq Z^0_m(t)$ (which we will simply refer to as RBMs) as follows. 
Write $Z^0_0(t)=0$ for all $t$.
The $m$ paths start from an ordered initial condition $Z^0_1(0)\leq Z^0_2(0)\leq \dotsm\leq Z^0_m(0)$ and perform Brownian motions with drifts $-2b_k$, $k=1,\dotsc,m$, and diffusivity $2$, and interact with each other by one-sided reflections: $Z^0_k(t)$ is reflected to the right off $Z^0_t(k-1)$, $k=1,\dotsc,m$, so that the particles always remain ordered.
In other words, $Z^0_1(t)$ is a Brownian motion with drift $-2b_1$ reflected off the origin and, recursively, $Z^0_k(t)$ is a Brownian motion with drift $-2b_k$ reflected off the lower path $Z^0_{k-1}(t)$, with all the Brownian motions used to run the system being independent.
The system can be constructed explicitly through the Skorokhod reflection mapping, see Section \ref{sec:kpz}.

It is known \cite{Harrison-Williams87} that the system $(Z^0_k)_{k=1,\dotsc,m}$ has a unique stationary distribution $\pi^{(\vec b)}$.
When $m=1$, in which case the system reduces to a Brownian motion reflected off the origin, the stationary distribution is well known to be an exponential with parameter $-2b_1$.
In the general case \cite{FitzGerald-Warren20} showed that, remarkably, $\pi^{(\vec b)}$ can be written in terms of point-to-line (exponential) LPP with boundary sources, see Section \ref{sec:cts}.
When all drifts are different there is also a characterization for the density as a sum of exponentials \cite{Dieker-Moriarty09}.

Let then $\big((\Zeq_k(t))_{t\in\rr}\big)_{k=1,\dotsc,m}$ be a (double-sided) stationary version of our RBMs, having $\pi^{(\vec b)}$ as its fixed time marginals.

\begin{thm}\label{thm:KPZfp}
Consider the Airy process with $m$ wanderers $\aip^{(\infty,\vec{b})}_{m}$ with $0<b_1\leq\dotsm\leq b_m$ and let $\cz^{(\vec b)}_\equ$ be the top path $\Zeq_m$ of the (double-sided) stationary version of the system of reflected Brownian motions with drift with a wall at the origin introduced above.
Then
\begin{equation}
F^{(\infty,\vec b)}_m(r)=\pp\big(\fh(1,0;\cz^{(\vec b)}_\equ)\leq r\big).\label{eq:KPZfp}
\end{equation}
\end{thm}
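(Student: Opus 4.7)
The plan is to prove Theorem~\ref{thm:KPZfp} by matching two Fredholm determinants. By Theorem~\ref{thm:DetFormula}, the left-hand side equals $\det(\fI-\P_0\tfB^{(\infty,\vec b)}_{2^{2/3}r}\P_0)_{L^2(\rr)}$, so it suffices to compute the one-point marginal of the KPZ fixed point started from $\cz^{(\vec b)}_\equ$ and show it takes this same form. The strategy is to apply the Matetski--Quastel--Remenik formula \cite[Defn.\ 3.12]{Matetski-Quastel-Remenik17} for $\pp(\fh(1,0;\fh_0)\le r)$ with $\fh_0=\cz^{(\vec b)}_\equ$ and then average over the random initial datum.

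First, for deterministic $\fh_0$ the MQR formula expresses $\pp(\fh(1,0;\fh_0)\le r)$ as a Fredholm determinant whose kernel couples to $\fh_0$ through a Brownian no-hit probability (a Brownian motion of variance $2$ run backward from time $1$ is required not to cross $\fh_0$). The main task is then to evaluate the expectation of this no-hit probability under the law of $\cz^{(\vec b)}_\equ$. I would use the Skorokhod-reflection construction of the RBM system together with the characterization of its stationary law (as in \cite{Harrison-Williams87,Dieker-Moriarty09,FitzGerald-Warren20}) to reduce the average to a finite-dimensional Gaussian integral involving $m$ independent Brownians with drifts $-2b_k$. A Karlin--McGregor or Lindstr\"om--Gessel--Viennot type determinantal identity on the reflected system, combined with the specific form of the stationary marginals, should be what generates \emph{both} factors $b_k+w$ and $b_k-w$ in \eqref{eq:Bab}: the drift produces the $b_k+w$, while integration against the stationary (exponential-type) density produces the $b_k-w$. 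A contour deformation picking up the poles at $w=b_k$ then identifies the resulting kernel with $\P_0\tfB^{(\infty,\vec b)}_{2^{2/3}r}\P_0$.

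The main obstacle is the interchange of the expectation over $\cz^{(\vec b)}_\equ$ with the Fredholm determinant: $\cz^{(\vec b)}_\equ$ is a random two-sided process, the MQR kernel depends on it nontrivially, and one needs trace-class bounds uniform enough to either move the average inside the determinant or take expectation term-by-term in the Fredholm series. A truncation-and-limit argument, exploiting the decay at $\pm\infty$ of the stationary RBM marginals together with the preservation of the growth condition defining $\UC$ under the KPZ fixed point, should do the job but requires care, as does the justification of the contour deformation through the drift parameters. As a parallel route more in line with the TASEP picture mentioned in the introduction, one could approximate $\cz^{(\vec b)}_\equ$ by the $\ep^{1/2}$-scaling limit of a TASEP initial configuration built as a random perturbation of the periodic pattern with local densities tuned to $b_1,\dotsc,b_m$ (the slow particles playing the role of the wanderers), apply \eqref{eq:TASEPtoFP} at $(1,0)$, and transfer the determinantal identity from the prelimit using the Borodin--P\'ech\'e \cite{Borodin-Peche08} scaling of the corresponding last passage percolation with boundary sources.
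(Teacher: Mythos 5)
There is a genuine gap at the heart of your first (main) route. For random initial data one has $\pp\big(\fh(1,0;\cz^{(\vec b)}_\equ)\leq r\big)=\ee\big[\det\big(\fI-\fK^{\hypo(\cz^{(\vec b)}_\equ)}\big)\big]$, an \emph{expectation of a Fredholm determinant} whose kernel depends on the whole random path through hitting probabilities. The obstacle you flag as technical (``move the average inside the determinant or take expectation term-by-term'') is in fact structural: each term of the Fredholm series involves products of several copies of the kernel evaluated on the \emph{same} random path, so you would need joint moments of no-hit probabilities, and $\ee\det\neq\det\ee$ in general. There is no reason the averaged quantity should again be a single Fredholm determinant, and averaging the kernel alone would give the wrong answer. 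The fact that the right-hand side of \eqref{eq:KPZfp} does equal one Fredholm determinant is a consequence of exact solvability of a prelimit model with this \emph{specific} random initial condition, not of a Karlin--McGregor/LGV averaging of the stationary law. Concretely, the paper realizes the initial data as the diffusive limit of TASEP data obtained by applying $m$ discrete-time PushTASEP steps with $q_k=\tfrac12(1+\ep^{1/2}b_k)$ to the half-periodic configuration; by a Dieker--Warren/Johansson-type argument the resulting (random) initial condition keeps TASEP determinantal, with the randomness absorbed into the symbol $\psi^\circ_{m,t}(w)=e^{t(w-1)}\prod_k\frac{q_k}{1-(1-q_k)w^{-1}}$. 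It is these rational factors, surviving the KPZ scaling limit inside the operators $\fT^{\vec b,\pm}$, that produce $\prod_k\frac{b_k+w}{b_k-w}$ in \eqref{eq:Bab} --- not a split ``drift gives $b_k+w$, stationary density gives $b_k-w$'' mechanism.

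A second missing ingredient is how to reach the \emph{two-sided stationary} initial data $\cz^{(\vec b)}_\equ$. The solvable construction only yields a one-sided system of RBMs with a wall at the origin, giving $\pp\big(\fh(t,x;\cz^0)\leq r\big)$ as in \eqref{eq:Z0form}. The paper then takes $x\to-\infty$, which requires two separate limits you do not address: a trace-norm limit of the kernel $(\fT^{\vec b,-}_{t,x})^*\bar\P_0(\fI+\varrho_0)\fT^{\vec b,+}_{t,-x}$ to $\tfB^{(\infty,\vec b)}_{t,2^{2/3}r}$ (handling the rational perturbations via steepest-descent bounds), and a total-variation convergence of the time-shifted gap process of the one-sided RBM system to its stationary version, combined with shift and reflection invariance of the KPZ fixed point and continuity of its transition probabilities in the initial data. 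Your ``parallel route'' via a TASEP approximation is much closer in spirit to the paper's proof, but as stated it is only a sketch: it neither specifies an initial condition for which exact formulas exist (slow particles/LPP boundary sources would give \eqref{eq:airywkpzfp}-type statements, not directly the stationary initial data) nor explains how stationarity is attained in the limit, which is precisely where the real work lies.
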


Since $\cz^{(\vec b)}_\equ$ is stationary and the KPZ fixed point is invariant under spatial shifts, $\fh(1,x;\cz^{(\vec b)}_\equ)$ is stationary in $x$, so \eqref{eq:KPZfp} also holds if we replace $\fh(1,x;\cz^{(\vec b)}_\equ)$ on the right hand side.
Note that if we take all $b_k$'s to infinity then each path $\Zeq_k$ in the stationary system of RBMs, and in particular $\cz^{(\vec b)}_\equ$, converges to $0$ (see Section \ref{sec:limittrans}).
Therefore $\cz^{(\vec b)}_\equ$ can be thought of as a random, $m$-parameter deformation of the flat initial data $\fh(0,\cdot)\equiv0$.

The proof of Theorem \ref{thm:KPZfp} is contained in Section \ref{sec:kpz}.
As a by-product of the proof we get in particular that $\aip^{(\infty,\vec b)}_m$ is itself the KPZ fixed point at time $1$ with initial data obtained from a similar one-sided system of RBMs $(Z^{\uptext{nw}}_m(t))_{t\geq0}$ where the lower one is free (instead of reflected off the origin): for $\cz^{(\vec b)}_{\uptext{nw}}(t)=Z^{\uptext{nw}}_m(-t)$ for $t\leq0$ and $-\infty$ for $t>0$, one has
\begin{equation}\label{eq:airywkpzfp}
\aip^{(\infty,\vec b)}_m(x)-x^2\stackrel{\uptext{(d)}}{=}\fh(1,x;\cz^{(\vec b)}_{\uptext{nw}})
\end{equation}
as processes in $x$.
Versions of this result are known in the context of TASEP and LPP, where $\cz^{(\vec b)}_{\uptext{nw}}$ is usually expressed equivalently as an initial condition constructed from Brownian LPP (see e.g. \cite{Borodin-Ferrari-Sasamoto09,Borodin-Peche08,Corwin-Ferrari-Peche10,Corwin-Liu-Wang16,Imamura-Sasamoto07}, and also Section \ref{sec:cts} below).
The identity 
\begin{equation}
\fh(1,0;\cz^{(\vec b)}_\equ)\stackrel{\uptext{(d)}}{=}\sup_{x\in\rr}\fh(1,x;\cz^{(\vec b)}_{\uptext{nw}})\label{eq:var}
\end{equation}
which follows from \eqref{eq:KPZfp} and \eqref{eq:airywkpzfp} is an instance of the general variational formula satisfied by the KPZ fixed point \cite[Thm. 4.18]{Matetski-Quastel-Remenik17}. 

\subsection{Relation to the Bloemendal--Virag and Korteweg--de Vries PDEs}

The {\it spiked Tracy--Widom distributions} describe the distribution of the largest eigenvalue in self-adjoint random matrix models with a deterministic finite-rank perturbation chosen in such a way that the largest eigenvalue (or several eigenvalues) begin to separate from the bulk.
They have attracted a lot of interest, in part because they characterize the asymptotic fluctuations of the top eigenvalues of Gaussian sample covariance matrices, i.e. the {\it Wishart ensembles}.
For the case of complex Hermitian matrices a Fredholm determinant formula for the spiked Tracy--Widom distribution was obtained initially by Baik, Ben Arous, and Pech\'{e}; the distribution is therefore known in the literature as the BBP distribution.
The Airy process with wanderers $\aip^{(\infty,\vec b)}_m$ provides a dynamic version of the BBP distribution: its one-point marginal distributions coincide exactly with the BBP distribution with spike parameters given by $\vec b$ shifted according to the time where one is focusing.

The most complete description of the spiked TW distributions is due to Bloemendal and Virag \cite{Bloemendal-Virag11, Bloemendal-Virag16}, who showed (among other descriptions) that they are characterized as solutions to a certain boundary value problem for a linear PDE.
Their results hold for a much more general family of spiked TW distributions which is indexed by a real parameter $\beta>0$, where $\beta=1,2,4$ correspond to real, complex, and quaternionic random matrix ensembles, respectively, and other values of $\beta$ may be understood in terms of tridiagonal matrix ensembles as described by Dumitriu and Edelman \cite{Dumitriu-Edelman02}.
As in \cite{Bloemendal-Virag16}, we denote the distribution function for the rank-$m$ spiked TW distribution as $F_{\beta}(x; w_1, w_2, \dots, w_m)$, where $w_1, \dots w_m$ are parameters which describe the strength of the perturbation. When any of the $w_i\to-\infty$, the largest eigenvalue becomes separated from the bulk and its fluctuations take place on a larger scale, thus $F_{\beta}(x; w_1, w_2, \dots, w_m)\longrightarrow 0.$ When $w_1, w_2, \dots w_m\to+\infty$, the perturbation becomes negligible and $F_{\beta}(x; w_1, w_2, \dots, w_m)$ approaches the usual (un-spiked) Tracy--Widom distribution $F_\beta$. The Bloemendal--Virag PDE is
\eq\label{eq:BVpde}
m\frac{\p F}{\p x} +\sum_{j=1}^m\left( \frac{2}{\beta}\frac{\p^2 F}{\p w_j^2} + (x-w_j^2) \frac{\p F}{\p w_j} \right)+\sum_{1\le j<k\le m} \frac{2}{w_k-w_j}\left(\frac{\p F}{\p w_k} - \frac{\p F}{\p w_j}\right)= 0, 
\eeq 
$(x;w_1, \dots, w_m) \in \rr^{m+1}$, and they showed that $F_{\beta}(x; w_1, w_2, \dots, w_m)$ is the unique bounded solution of \eqref{eq:BVpde} which satisfies the boundary conditions
\begin{align}
& F \longrightarrow 1\qquad  {\rm as }  \ x\to+\infty \  {\rm with} \ w_1, w_2, \dots, w_m \  {\rm bounded  \ below}, \label{eq:bc1} \\
&F \longrightarrow 0  \qquad {\rm as \ any} \ w_i\to -\infty \ {\rm with }  \ x  \ {\rm bounded \ above}. \label{eq:bc2}
\end{align}

For the case $\beta = 2$, Baik \cite{Baik06} obtained the following formula, very similar to \eqref{eq:PII1}, for the spiked TW distribution in terms of the Hastings--McLeod solution to PII and the functions $f$ and $g$ defined in \eqref{eq:Lax1}--\eqref{eq:IC}:
\eq\label{eq:spikedb2}
F_{2}(x; w_1, \dots w_m) =\frac{F_2(x)}{\prod_{1\le j<k \le m}(w_k-w_j)} \det\left[(w_j+D_x)^{k-1}f(x,w_j)\right]_{j,k=1}^m.  \\
\eeq
For $\beta=4$ a similar formula is available in the case $m=1$ \cite{Bloemendal-Virag11}:
\eq\label{eq:spikedb4}
F_{4}(2^{-2/3}x; 2^{-1/3}w) = \tfrac{1}{2} \Big((f(x,w)+ g(x,w))E(x)^{-1}+(f(x,w)- g(x,w))E(x)\Big)\!F(x);
\eeq
in the case $w=0$ this was first proved by Wang \cite{Wang08}, who also obtained a Fredholm determinant formula.
For $\beta=1$ there is a rather complicated formula related to PII for the case $m=1$ due to Mo \cite{Mo12}, while no Fredholm determinant formula is known.

In light of \eqref{eq:F1_identity} and given that $F^{(\infty,\vec{b})}_m$ represents a deformation of the Tracy--Widom GOE distribution, and it arises via the Airy process with wanderers, whose marginals are given by the BBP distribution, one might hope that $F^{(\infty,\vec{b})}_m$ is related to the spiked TW distribution with $\beta=1$. This hope is perhaps reinforced by the striking formal similarity between \eqref{eq:PII1} and the formula \eqref{eq:spikedb2} for the BBP distribution, as well as by the fact that the supremum of finitely many non-intersecting Brownian bridges without outliers coincides with largest singular value of a real Wishart matrix \cite{Nguyen-Remenik17}.
We were, however, unable to find such a connection (note in particular that $F^{(\infty,\vec b)}_m$ goes to $0$ as the $b_i$'s go to $0$, while $F_1(x;w_1,\dotsc,w_m)$ does so as the $w_i$'s go to $-\infty$).
Nevertheless, for the case $m=1$, one immediately sees that the right hand side of \eqref{eq:PII1} is exactly twice the second term on the right hand side of \eqref{eq:spikedb4} (recall that $F_1(x)=F(x) E(x)$ in \eqref{eq:PIIfunctions}). The PDE \eqref{eq:BVpde} is linear and is in fact satisfied by each term of \eqref{eq:spikedb4}, so the following corollary is immediate.
It can be checked using formula \eqref{eq:PII1} along with \eqref{eq:Lax1} and \eqref{eq:Lax2}, together with the general identity
\eq\label{eq:PIIId}
v+q^4-(q')^2+xq^2 = 0,
\eeq
which follows from the PII equation satisfied by $q(x)$ (recall $v(x)$ defined in \eqref{eq:PIIfunctions}); uniqueness follows from the same arguments in \cite{Bloemendal-Virag11}\footnote{In \cite{Bloemendal-Virag11} the argument for uniqueness is based on representing the solution of \eqref{eq:BVpde} as the probability that a certain diffusion explodes to $-\infty$; in our case due to the change in the boundary condition this becomes the probability that the diffusion hits the origin, but the rest of the argument remains valid.}.

\begin{cor}[Corollary to the PII formula for $F^{(\infty,b)}_1$]\label{cor:BV}
The distribution function $F^{(\infty,b)}_1(r)$ is the unique bounded solution of the Bloemendal--Virag PDE \eqref{eq:BVpde} with $m=1$, $x=r$, $w=b>0$, and $\beta=4$, in the domain $r\in \rr$, $b>0$, subject to the boundary conditions
\begin{align}
& F \longrightarrow 1\qquad  {\rm as }  \ r\to+\infty \  {\rm with} \ b \ {\rm bounded \ below}, \label{eq:bc1b} \\
&F \longrightarrow 0  \qquad {\rm as } \ b\to0^+ \ {\rm with }  \ r  \ {\rm bounded \ above}. \label{eq:bc2b}
\end{align}
\end{cor}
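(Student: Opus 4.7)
The plan is to exploit the striking formal identity, already flagged in the paragraph preceding the statement, that our $F^{(\infty,b)}_1(r)$ from \eqref{def_dist} is precisely twice the second summand in Baik's formula \eqref{eq:spikedb4} for the rank-one spiked Tracy--Widom GSE distribution $F_4(2^{-2/3}x;2^{-1/3}w)$. Since Bloemendal--Virag already proved that $F_4(2^{-2/3}x;2^{-1/3}w)$ solves \eqref{eq:BVpde} with $m=1$, $\beta=4$, and since the PDE is linear, to conclude that $F^{(\infty,b)}_1$ also solves it (after the affine change of variables $x=2^{2/3}r$, $w=2^{1/3}b$, which transforms the equation into itself up to an overall constant) it suffices to check that the two summands of \eqref{eq:spikedb4} solve the PDE separately. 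Thus my plan has three stages: (i) separability of the two pieces of \eqref{eq:spikedb4}; (ii) boundary conditions \eqref{eq:bc1b}--\eqref{eq:bc2b}; (iii) uniqueness.

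For (i), write $T_{\pm}(x,w) = \tfrac{1}{2}(f(x,w)\pm g(x,w))E(x)^{\mp 1}F(x)$ so that $F_4(2^{-2/3}x;2^{-1/3}w) = T_+(x,w) + T_-(x,w)$ and, by \eqref{def_dist} together with $F_1=FE$ from \eqref{eq:PIIfunctions}, $F^{(\infty,b)}_1(r) = 2\,T_-(2^{2/3}r,2^{1/3}b)$. I would verify the PDE for $T_{\pm}$ by direct substitution, using: the $w$-derivative rule \eqref{eq:Lax1} to compute $\p_w T_{\pm}$ and $\p_w^2 T_{\pm}$; the $x$-derivative rule \eqref{eq:Lax2} together with the logarithmic derivatives $F'/F=\tfrac{1}{2}v$ and $E'/E=\tfrac{1}{2}q$ (read off from \eqref{eq:PIIfunctions}) to compute $\p_x T_\pm$; and the identity \eqref{eq:PIIId} (a first integral of PII) to cancel the remaining polynomial-in-$w$ coefficients. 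The fact that the two summands differ essentially by flipping the sign of $g$ and inverting $E$ — a discrete symmetry of the Lax system — is what makes the separation into $T_+$ and $T_-$ preserved by the PDE.

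For (ii), boundary condition \eqref{eq:bc1b} follows from the well-known asymptotics: as $r\to+\infty$ one has $q,v\to 0$ rapidly, so $F,E,F_1\to 1$, while \cite{Baik-Rains00,Baik-Rains01a} give $f(x,w)\to 1$ and $g(x,w)\to 0$ as $x\to+\infty$ uniformly for $w$ bounded below, so that $F^{(\infty,b)}_1(r)\to 1$. Condition \eqref{eq:bc2b} is even cleaner: the initial condition \eqref{eq:IC} gives $f(x,0)=g(x,0)=F(x)$, hence $f(2^{2/3}r,2^{1/3}b)-g(2^{2/3}r,2^{1/3}b)\to 0$ as $b\to 0^+$, and this limit is locally uniform in $r$ by continuity of $f,g$ in $(x,w)$ coming from the ODE system \eqref{eq:Lax1}.

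For (iii), I would reproduce the Bloemendal--Virag uniqueness argument verbatim, the only change being the target of the exit problem, as indicated in the footnote to the statement: for the TW-GSE spiked distribution one represents any bounded solution of the PDE as the probability that an auxiliary diffusion (with coefficients read off from \eqref{eq:BVpde}) explodes to $-\infty$, whereas here the boundary condition \eqref{eq:bc2b} forces that probability to be replaced by the probability that the same diffusion hits $b=0$; all the maximum-principle steps that drive their proof are insensitive to this substitution. The main obstacle is therefore the algebraic verification in step (i): it is not conceptually difficult but requires patience to carry through the cancellations using \eqref{eq:Lax1}--\eqref{eq:Lax2} and \eqref{eq:PIIId} without error.
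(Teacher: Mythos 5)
Your architecture is the same as the paper's: identify $F^{(\infty,b)}_1(r)$ as twice the second summand of \eqref{eq:spikedb4}, exploit linearity of \eqref{eq:BVpde}, verify that each summand solves the PDE by direct substitution using \eqref{eq:Lax1}, \eqref{eq:Lax2}, $E'/E=\tfrac12 q$, $F'/F=\tfrac12 v$ and \eqref{eq:PIIId}, and import the Bloemendal--Virag uniqueness argument with the explosion event replaced by hitting the origin (exactly the paper's footnote). However, one step is wrong as stated: the parenthetical claim that the change of variables $x=2^{2/3}r$, $w=2^{1/3}b$ transforms \eqref{eq:BVpde} (with $m=1$, $\beta=4$) into itself up to an overall constant. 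It does not, because the scaling is anisotropic in a way that does not match the drift term: writing $u(r,b)=U(2^{2/3}r,2^{1/3}b)$ one finds
\[
\p_r u+\tfrac12\,\p_b^2u+(r-b^2)\,\p_bu \;=\; 2^{-1/3}\Big(2\,\p_xU+\p_w^2U+(x-w^2)\,\p_wU\Big),
\]
so the drift term carries a relative factor $\tfrac12$ compared with the $\p_x$ and $\p_w^2$ terms. Consequently the identity you plan to verify in stage (i) --- that $T_\pm(x,w)=\tfrac12(f\pm g)E^{\mp1}F$ solves \eqref{eq:BVpde} in the variables $(x,w)$ --- is false, and the computation would not close. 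What is true, and what the corollary needs, is that the summands of \eqref{eq:spikedb4} regarded as functions of the \emph{arguments of} $F_4$, i.e. $\wt T_\pm(r,b)=T_\pm(2^{2/3}r,2^{1/3}b)$, solve \eqref{eq:BVpde} in $(r,b)$; equivalently, $T_\pm$ solves $2\,\p_xT_\pm+\p_w^2T_\pm+(x-w^2)\,\p_wT_\pm=0$ in $(x,w)$. So carry the chain-rule factors $2^{2/3}$, $2^{1/3}$ through your substitution (or verify the latter equation); with that bookkeeping fixed, your stage (i) is exactly the verification the paper alludes to.

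The rest matches the paper. Two small remarks on the boundary conditions: for \eqref{eq:bc2b} your continuity argument gives the limit locally uniformly in $r$, while the condition allows $r\to-\infty$ simultaneously; this is closed immediately by monotonicity of $F^{(\infty,b)}_1$ in $r$ (it is a distribution function), bounding $F^{(\infty,b)}_1(r)\leq F^{(\infty,b)}_1(r_0)$ for $r\leq r_0$. Similarly, for \eqref{eq:bc1b} you can avoid invoking uniform-in-$w$ asymptotics of $f,g$ (which would need a precise citation) by using monotonicity of $F^{(\infty,b)}_1(r)$ in $b$, which follows from the coupling/stochastic domination of the stationary RBM initial data as in Section \ref{sec:limittrans}, reducing the claim to the pointwise limit already discussed after \eqref{def_dist}.
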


The appearance of the PDE  \eqref{eq:BVpde} with $\beta=4$ in this context is quite unexpected from a physical point of view, and one might hope that it indicates some relation between the $\beta=1$ and the $\beta=4$ versions of \eqref{eq:BVpde}. We were, again, unable to find such a connection.

For $m>1$, the distribution function $F^{(\infty,\vec{b})}_m(r)$ appears to satisfy a PDE which is nearly identical to \eqref{eq:BVpde}, but has a different coefficient in front of the interaction term.

\begin{conj}\label{conj:BV}
The distribution function $F^{(\infty,\vec{b})}_m(r)$ is the unique bounded solution of the PDE
\eq\label{eq:BVpdealt}
m\frac{\p F}{\p r} +\sum_{j=1}^m\left( \frac{2}{\beta}\frac{\p^2 F}{\p b_j^2} + (r-b_j^2) \frac{\p F}{\p b_j} \right)+\sum_{1\le j<k\le m} \frac{1}{b_k-b_j}\left(\frac{\p F}{\p b_k} - \frac{\p F}{\p b_j}\right)= 0
\eeq 
with $\beta=4$, in the domain $r\in \rr$ and $b_j>0$ for each $j$, subject to the boundary conditions
\begin{align}
& F \longrightarrow 1\qquad  {\rm as }  \ r\to+\infty \  {\rm with} \ b_1, b_2, \dots, b_m \  {\rm bounded  \ below}, \label{eq:bc1c} \\
&F \longrightarrow 0  \qquad {\rm as \ any} \ b_i\to 0^+ \ {\rm with }  \ r  \ {\rm bounded \ above}. \label{eq:bc2c}
\end{align}
\end{conj}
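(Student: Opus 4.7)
The PDE \eqref{eq:BVpdealt} is linear and we have the explicit Painlevé II expression \eqref{eq:PII1} for $F^{(\infty,\vec{b})}_m$, so the natural strategy is direct verification, extending the argument already used for Corollary \ref{cor:BV} (the $m=1$ case). Crucially, the shape of \eqref{eq:PII1} is strikingly parallel to Baik's formula \eqref{eq:spikedb2} for the $\beta=2$ BBP distribution: same column operator $(b_j+D_r)^{k-1}$, same Vandermonde denominator, with $F_2(x)$ replaced by $F_1(2^{2/3}r)$ and $f(x,w)$ replaced by $(f-g)(2^{2/3}r,2^{1/3}b_j)$. Since the $\beta=2$ Bloemendal--Virag PDE \eqref{eq:BVpde} is proved in \cite{Bloemendal-Virag11} by direct manipulation of \eqref{eq:spikedb2}, the plan is to reproduce that argument under these substitutions and track how the resulting coefficients on drift, diffusion, and interaction terms shift.

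Concretely, use the Lax pair \eqref{eq:Lax1}--\eqref{eq:Lax2} together with the PII identity \eqref{eq:PIIId} to express $\partial_r$, $\partial_{b_j}$, and $\partial_{b_j}^2$ acting on each determinant entry $(b_j+D_r)^{k-1}(f-g)$ as a linear combination of $(b_j+D_r)^{\ell}(f\pm g)$ for nearby $\ell$, with coefficients polynomial in $(r,b_j,q,q')$. Substituting into the PDE operator and grouping column-by-column, the quadratic drift $(r-b_j^2)\partial_{b_j}$ combines with the diffusion $\tfrac{1}{2}\partial_{b_j}^2$, via \eqref{eq:Lax1}, into a single operator that, when added to $m\partial_r F$ (and after accounting for the logarithmic derivative of the prefactor $F_1(2^{2/3}r)$, controlled via the defining relations of the Hastings--McLeod function $q$), reduces to an expression that vanishes inside the determinant modulo terms linearly dependent on the other columns. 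The pairwise interaction sum $\sum_{j<k}(b_k-b_j)^{-1}(\partial_{b_k}-\partial_{b_j})$ then materializes from the divided-difference structure forced by the Vandermonde denominator, exactly as in the $\beta=2$ derivation. The boundary conditions \eqref{eq:bc1c}--\eqref{eq:bc2c} follow immediately from \eqref{eq:limits1}--\eqref{eq:limits2} (to be established in Section \ref{sec:limittrans}), while uniqueness among bounded solutions is handled by the backward-Kolmogorov argument already adapted in Corollary \ref{cor:BV}: the PDE is dual to the generator of a diffusion on the Weyl chamber $\{0<b_1<\cdots<b_m\}\times\mathbb{R}$, with the two boundary conditions encoding its absorption events.

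The decisive obstacle is pinning down the precise coefficient $1$ (rather than $2$) on the interaction term. The expected heuristic is that, of the two factors in each entry of the determinant, only the polynomial piece $(b_j+D_r)^{k-1}$ carries explicit $b_j$-dependence once the entry is reassembled into the determinant, whereas in the $\beta=2$ case one has an additional contribution from the direct $w_j$-dependence of $f(x,w_j)$, symmetric to the $(b_j+D_r)$ action; the two contributions together produce the coefficient $2$, and removing the symmetric one should yield $1$. Confirming this—i.e. showing that the non-diagonal coupling of $f$ and $g$ in \eqref{eq:Lax1} does not spoil the cancellation, and that the ``cross terms'' between $\partial_{b_j}f$ and $\partial_{b_j}g$ are absorbed inside the determinant via Vandermonde identities—is the principal technical hurdle, and presumably the reason the statement appears as a conjecture rather than a theorem. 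An independent cross-check could in principle be obtained from the KPZ fixed point representation \eqref{eq:KPZfp} via an Itô calculation on the drifts $-2b_k$ of the stationary reflected Brownian system, but this would require differentiability of the invariant measure $\pi^{(\vec b)}$ in the drifts, which is itself nontrivial.
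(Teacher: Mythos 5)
There is a genuine gap, and it is worth being clear that the paper itself does not prove this statement: it is stated as a conjecture, and the authors only verify the identity symbolically (by computer algebra, starting from \eqref{eq:PII2} together with \eqref{eq:Lax1}--\eqref{eq:Lax2}) for $m=2,3,4$, explicitly remarking that a general-$m$ proof ``appears not to be straightforward'' and is left for future work. Your proposal follows essentially the route the authors have in mind --- differentiate the determinantal Painlev\'e II formula using the Lax pair and the identity \eqref{eq:PIIId}, absorb the drift and diffusion terms column by column, and let the Vandermonde structure generate the interaction sum --- but it stops exactly at the step that makes the statement a conjecture. You never establish the cancellation that produces the interaction coefficient $1$ in \eqref{eq:BVpdealt}; you name it as ``the principal technical hurdle'' and move on, so what you have is a plan, not a proof. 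Moreover, the heuristic you offer for why the coefficient should be $1$ rather than $2$ (that only the polynomial factor $(b_j+D_r)^{k-1}$ carries explicit $b_j$-dependence) is not accurate: each entry of \eqref{eq:PII1} also depends on $b_j$ through the argument $2^{1/3}b_j$ of $f-g$, in precisely the same way that Baik's entries in \eqref{eq:spikedb2} depend on $w_j$, so the claimed mechanism for halving the interaction term is unsupported; whatever produces the change of coefficient (note also the rescalings $2^{2/3}r$, $2^{1/3}b_j$ and the change from $2/\beta=1$ to $2/\beta=1/2$) has to come out of the actual computation, which is the missing content.

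Two secondary points. First, uniqueness: the paper's footnote argument adapting \cite{Bloemendal-Virag11} is given only for $m=1$ (Corollary \ref{cor:BV}), where the solution is represented through a one-dimensional diffusion hitting the origin; for $m>1$ the representation involves a multi-dimensional diffusion on the chamber $\{0<b_1<\dotsm<b_m\}$, and because the interaction coefficient in \eqref{eq:BVpdealt} differs from that in \eqref{eq:BVpde}, the relevant diffusion is not the Bloemendal--Virag one, so the adaptation you assert is itself a nontrivial piece of the conjecture, not something already ``handled.'' Second, the boundary condition \eqref{eq:bc1c} does not follow immediately from \eqref{eq:limits1}: the latter is a statement about $b_1,\dotsc,b_m\to\infty$ at fixed $r$, whereas \eqref{eq:bc1c} requires $F\to1$ as $r\to+\infty$ uniformly over $\vec b$ bounded below, which needs its own (plausible but unproven) tightness estimate; only \eqref{eq:bc2c} is genuinely covered by \eqref{eq:limits2}. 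If you want a result at the level of what the paper actually establishes, the honest version of your argument is the authors': verify the PDE identity from \eqref{eq:PII2} (more convenient than \eqref{eq:PII1}, since the column operators there are plain powers of $b_j$) for small $m$ by symbolic computation, and state the general case as open.
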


Using \eqref{eq:PII2} together with the equations \eqref{eq:Lax1} and \eqref{eq:Lax2} for $f$ and $g$, it is straightforward to check Conjecture \ref{conj:BV} using a computer algebra system for small values of $m$. We did so for $m=2,3,4$, so in that sense this conjecture can be considered a theorem for $m<5$. 
A proof for general $m$ is surely possible based on these indentities, but it appears not to be straightforward, and is left for future work.

A connection to a different (nonlinear) PDE is arises from the Fredholm determinant formula for $F^{(\infty,\vec{b})}_1(r)$.

\begin{cor}[Corollary to the determinantal formula for $F^{(\vec a,\vec b)}_m$] \label{cor:KdV}
Introduce a rescaling parameter $t>0$, and for fixed $\vec a,\vec b>0$ consider the second logarithmic derivative of rescaled distribution function $F^{(t^{1/3}\vec a,t^{1/3}\vec b)}_m(t^{-1/3} r)$,
\eq
\phi(t,r) \coloneqq \frac{\p^2}{\p r^2}  \log F^{(t^{1/3}\vec a,t^{1/3}\vec b)}_m(t^{-1/3} r).
\eeq
Then $\phi(t,r)$ satisfies the Korteweg--de Vries (KdV) equation
\eq\label{eq:KdV}
\frac{\p \phi}{\p t} + \phi \frac{\p \phi}{\p r} + \frac{1}{12}  \frac{\p^3 \phi}{\p r^3}=0.
\eeq
\end{cor}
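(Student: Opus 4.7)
The strategy is to rescale the Fredholm determinant of Theorem \ref{thm:DetFormula} so that the $(t,r)$-dependence of the kernel is confined to a single exponential factor in its contour integral representation, and then to invoke the classical mechanism by which Hankel kernels on a half-line with such linear $(t,r)$-evolution produce KdV solutions through the second logarithmic derivative.

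\textbf{Step 1 (Rescaling).} Starting from \eqref{eq:Fab-det}--\eqref{eq:Bab}, I would substitute $\vec a\mapsto t^{1/3}\vec a$, $\vec b\mapsto t^{1/3}\vec b$, $r\mapsto t^{-1/3}r$, perform the change of variables $w\mapsto t^{1/3}w$ on the contour, and conjugate by the unitary rescaling $x\mapsto t^{-1/3}x$ on $L^2(\rr_+)$ (which fixes $\P_0$). The Jacobian factors cancel, giving
\begin{equation*}
F^{(t^{1/3}\vec a,t^{1/3}\vec b)}_m(t^{-1/3}r)=\det\!\bigl(\fI-\P_0 K^{(t,r)}\P_0\bigr)_{L^2(\rr)},
\end{equation*}
where $K^{(t,r)}(x,y)=\mathcal{A}(t,x+y+2r)$ with
\begin{equation*}
\mathcal{A}(t,s)=\inv{2\pi\I}\int_{\langle}dw\,e^{2tw^3/3-sw}\prod_{k=1}^{m}\frac{a_k+w}{a_k-w}\frac{b_k+w}{b_k-w}.
\end{equation*}
The entire $(t,r)$-dependence now sits in the exponential factor; the rational weight is frozen.

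\textbf{Step 2 (Linear evolution of the kernel).} Differentiating under the contour integral gives $\partial_t\mathcal{A}=-\tfrac{2}{3}\partial_s^3\mathcal{A}$, since $\partial_t$ pulls down $\tfrac{2}{3}w^3$ while $-\partial_s$ pulls down $w$. Using the Hankel structure together with $\partial_rK^{(t,r)}=2\partial_s\mathcal{A}$ and $\partial_r^3K^{(t,r)}=8\partial_s^3\mathcal{A}$, this yields the operator identity
\begin{equation*}
\partial_tK^{(t,r)}=-\tfrac{1}{12}\partial_r^3K^{(t,r)}.
\end{equation*}

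\textbf{Step 3 (From Hankel + linear evolution to KdV).} With Steps 1 and 2 in hand, the corollary reduces to the general principle (originally due to Tracy--Widom for Airy-type kernels, and systematized by P\"oppe and by Adler--Shiota--van Moerbeke) that for any Hankel kernel $K(x,y)=h(x+y+2r)$ on $(0,\infty)$ whose symbol satisfies $\partial_th+\tfrac{2}{3}\partial_s^3h=0$, the function $\phi=\partial_r^2\log\det(\fI-\P_0K\P_0)$ satisfies \eqref{eq:KdV}. Concretely, I would express $\partial_r\phi$, $\partial_r^3\phi$, and $\partial_t\phi$ as traces involving the resolvent $R=(\fI-\P_0K\P_0)^{-1}$ and derivatives of $K$; use the identity $\partial_rK=(\partial_x+\partial_y)K$, together with integration by parts on $\rr_+$, to trade $r$-derivatives for commutators with $\partial_x$ modulo rank-one boundary contributions involving $K(\cdot,0)$; and substitute the evolution equation of Step 2 into the trace expression for $\partial_t\phi$ to match the three terms of \eqref{eq:KdV}.

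\textbf{Main obstacle.} The delicate computation is Step 3: the rank-one boundary terms generated when $\partial_r$ is pushed through $\P_0K\P_0$ are precisely what produce the KdV nonlinearity $\phi\,\partial_r\phi$, and they must combine with the $-\tfrac{1}{12}\partial_r^3K$ contribution from the linear evolution to reproduce \eqref{eq:KdV} exactly. The rational factor in $\mathcal{A}$ plays no explicit role beyond supplying an integrable weight on the contour, which reflects the universality of KdV for this family of Hankel/exponential Fredholm determinants.
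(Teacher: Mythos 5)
Your proposal is correct and follows essentially the same route as the paper: the same rescaling $(x,y)\mapsto(t^{-1/3}x,t^{-1/3}y)$, $w\mapsto t^{1/3}w$ to freeze the rational factor, the same observation that the resulting Hankel kernel satisfies $\partial_r\wh\fB=(\partial_x+\partial_y)\wh\fB$ and $\partial_t\wh\fB=-\tfrac13(\partial_x^3+\partial_y^3)\wh\fB$ (equivalent to your $\partial_tK=-\tfrac1{12}\partial_r^3K$), and then an appeal to the known mechanism turning such determinants into KdV solutions. The only difference is that where you sketch the resolvent/trace computation behind that mechanism, the paper simply cites the argument of Quastel--Remenik (Section 3, Remark 3.1 of \cite{Quastel-Remenik19a}), which carries out exactly the computation you describe.
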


This corollary follows directly from the results of \cite{Quastel-Remenik19a} and the Fredholm determinant formula \eqref{eq:Fab-det}. A short proof is presented at the end of Section \ref{sec:DetFormula}.
The question of uniqueness for this equation remains open, see \cite{Quastel-Remenik19a}.

The result is related to the KPZ fixed point characterization of $F^{(\infty,b)}_1(r)$, whose one-point distributions were shown in \cite{Quastel-Remenik19a} to satisfy an integrable PDE which is a two-dimensional extension of KdV: for fixed, deterministic initial data $\fh_0\in\UC$, one has that $\phi(t,x,r)=\p_r^2\log\pp(\fh(t,x;\fh_0)\leq r)$ solves the Kadomtsev-Petviashvili (KP) equation 
\begin{equation}
\frac{\p}{\p r}\!\left(\frac{\p\phi}{\p t}+\frac12\frac{\p\phi^2}{\p r}+\frac1{12}\frac{\p^3\phi}{\p r^3}\right)+\frac14\frac{\p^2\phi}{\p x^2}=0.\label{eq:KP}	
\end{equation}
In fact, by the 1:2:3 scaling invariance of the KPZ fixed point (or rather as a direct consequence of our method of proof), \eqref{eq:KdV} together with Theorem \ref{thm:KPZfp} imply directly the following: 

\begin{cor}\label{cor:kdv-fp}
Let 
\[\phi^{(\vec b)}(t,r)=\p_r^2\log\pp(\fh(t,0;\cz^{(\vec b)}_\equ)\leq r),\]
where $\cz^{(\vec b)}_\equ$ is the top path in the system of RBMs introduced in Theorem \ref{thm:KPZfp}. 
Then $\phi^{(\vec b)}$ solves the KdV equation \eqref{eq:KdV}.
\end{cor}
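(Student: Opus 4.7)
The strategy is to reduce Corollary \ref{cor:kdv-fp} directly to Corollary \ref{cor:KdV} by exploiting the 1:2:3 scaling invariance of the KPZ fixed point. Concretely, the identity I aim to establish is
\begin{equation}
\pp\bigl(\fh(t,0;\cz^{(\vec b)}_\equ)\leq r\bigr)=F^{(\infty,t^{1/3}\vec b)}_m(t^{-1/3}r),
\end{equation}
after which $\phi^{(\vec b)}(t,r)$ coincides with the function $\phi(t,r)$ of Corollary \ref{cor:KdV} (specialized to $\vec a=\infty$), and the KdV equation \eqref{eq:KdV} follows immediately.

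The first step is to invoke the standard 1:2:3 scaling invariance of the KPZ fixed point from \cite{Matetski-Quastel-Remenik17}: for any (possibly random) initial data $\fh_0\in\UC$,
\begin{equation}
\fh(t,0;\fh_0)\stackrel{\uptext{(d)}}{=}t^{1/3}\fh\bigl(1,0;T_t\fh_0\bigr),\qquad(T_t\fh_0)(y)\coloneqq t^{-1/3}\fh_0(t^{2/3}y),
\end{equation}
which reduces the matter to identifying the law of $T_t\cz^{(\vec b)}_\equ$. The second step is to verify that $T_t\cz^{(\vec b)}_\equ\stackrel{\uptext{(d)}}{=}\cz^{(t^{1/3}\vec b)}_\equ$. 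Constructing the RBM system $(\Zeq_k)_{k=1}^m$ via the Skorokhod reflection map applied to independent Brownian motions with drifts $-2b_k$ and diffusivity $2$, the rescaling $Z\mapsto t^{-1/3}Z(t^{2/3}\cdot)$ sends such a driving BM to one with drift $-2t^{1/3}b_k$ and the same diffusivity: the term $-2b_ks$ becomes $-2t^{1/3}b_ky$, while $\sqrt{2}t^{-1/3}W(t^{2/3}y)$ is again $\sqrt{2}$ times a standard BM in $y$. Since the Skorokhod reflection map is positive-homogeneous and the wall at the origin is invariant under this rescaling, the whole reflected system reparametrizes into the RBM system with drifts $-2t^{1/3}b_k$; by uniqueness of the stationary distribution $\pi^{(\vec b)}$ \cite{Harrison-Williams87} the rescaled double-sided stationary version is precisely $\cz^{(t^{1/3}\vec b)}_\equ$. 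Combining with Theorem \ref{thm:KPZfp} yields the displayed identity, and then Corollary \ref{cor:KdV} concludes.

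The main (and really only) nontrivial step is this verification that the RBM stationary law rescales correctly under the KPZ fixed point scaling. A secondary point that requires a comment is that Corollary \ref{cor:KdV} is formally stated for finite $\vec a$, whereas I need to apply it at $\vec a=\infty$. This extension is harmless: its proof reduces to the Fredholm determinant formula of Theorem \ref{thm:DetFormula} combined with the general results of \cite{Quastel-Remenik19a}, and the kernel \eqref{eq:Bab} makes perfect sense at $\vec a=\infty$ (the factors $(a_k+w)/(a_k-w)$ simply drop out), so the same argument applies verbatim. Alternatively one can take $a_1,\dotsc,a_m\to\infty$ in the KdV identity, using that $F^{(\vec a,\vec b)}_m$ and its derivatives converge smoothly in this limit, as follows from the Fredholm determinant representation.
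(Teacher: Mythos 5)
Your proof is correct. The paper mentions two routes for this corollary (``by the 1:2:3 scaling invariance of the KPZ fixed point (or rather as a direct consequence of our method of proof)''), without spelling out either, and you carry out the first one carefully. The key lemma in your route, namely $T_t\cz^{(\vec b)}_\equ\stackrel{\uptext{(d)}}{=}\cz^{(t^{1/3}\vec b)}_\equ$ under the 1:2:3 rescaling $T_t$, is argued correctly: the Skorokhod reflection map commutes pathwise with the rescaling, each driving Brownian motion with drift $-2b_k$ and diffusivity $2$ rescales to one with drift $-2t^{1/3}b_k$ and the same diffusivity, and uniqueness of the stationary law \cite{Harrison-Williams87} then identifies the rescaled double-sided stationary process. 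The paper's preferred route is shorter and bypasses this lemma entirely: the proof of Theorem~\ref{thm:KPZfp} establishes \eqref{eq:Zequform}, which gives $\pp(\fh(t,0;\cz^{(\vec b)}_\equ)\leq r)$ for all $t>0$ as the Fredholm determinant of the kernel $\tfB^{(\infty,\vec b)}_{t,2^{2/3}r}$, and this is precisely the Hankel kernel $\wh\fB$ (at $\vec a=\infty$) appearing in the proof of Corollary~\ref{cor:KdV}, so the differential relations $\p_r\wh\fB=(\p_x+\p_y)\wh\fB$ and $\p_t\wh\fB=-\tfrac13(\p_x^3+\p_y^3)\wh\fB$ and the argument of \cite{Quastel-Remenik19a} apply directly. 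Your route buys a more self-contained statement at the level of the KPZ fixed point scaling invariance, at the cost of the RBM-rescaling verification; the paper's route buys directness at the cost of dipping into the internals of the proof of Theorem~\ref{thm:KPZfp}. Your remark on extending Corollary~\ref{cor:KdV} to $\vec a=\infty$ is also correct and appropriate: the kernel \eqref{eq:Bab} and the Hankel argument make sense with the $a_k$-factors dropped, and nothing in the proof uses finiteness of the $a_k$'s.
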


We stress that in general \eqref{eq:KP} does not hold for random initial data; it is expected to hold only in very special initial conditions such as half-Brownian (see \cite[Ex. 2.7]{Quastel-Remenik19a}).
If $\phi(t,x,r)$ does not depend on $x$ then \eqref{eq:KP} reduces to the KdV equation \eqref{eq:KdV}, but for deterministic initial data, $\fh(t,x,r)$ can be stationary in space only if $\fh_0$ is constant.
Corollary \ref{cor:kdv-fp} provides the first known example beyond flat initial data for which the distribution of the KPZ fixed point is connected to KdV.

Corollaries \ref{cor:BV} and \ref{cor:KdV} together provide a link between the Bloemendal--Virag PDE and the KdV equation, although it does not appear to be straightforward to turn one equation directly into the other.
The KdV equation is one of the most important nonlinear equations in integrable PDE, and one could hope that this connection could help shed light on any further integrable structure of the Bloemendal--Virag PDE.
We note that an analogous statement holds for the KP equation and the BBP distribution with $\beta=2$ (see \cite[Ex. 2.4]{Quastel-Remenik19a}).

\subsection{Connection to LPP with boundary sources}\label{sec:cts}

In this section we briefly discuss connections with a model of last passage percolation with boundary sources introduced in \cite{Borodin-Peche08}.
Let $\{\hat\alpha_j\}_{j\in \nn}$ and $\{\hat\beta_j\}_{j\in \nn}$ be two sequences of real numbers satisfying $\hat\alpha_i+\hat\beta_j>0$ for all $i,j\in \nn$, and consider a family $\{w_{i,j}\}_{i,j\in\nn}$ of independent exponential random variables with means $\ee(w_{i,j})=1/(\hat\alpha_i+\hat\beta_j)$.
The \emph{point-to-point last passage times} are defined as
\eq
L((i,j)\to(i',j'))=\max_{\pi\in\Pi:(i,j)\to(i',j')}{\textstyle\sum_\ell}\ts w_{\pi_\ell}
\eeq
for $i\leq i'$, $j\leq j'$, where the maximum is taken over the set of all directed paths (i.e. taking up and right steps) connecting $(i,j)$ to $(i',j')$.

In \cite{Borodin-Peche08} the authors consider the special case that there exists a fixed $m\in \nn$ such that $\hat\alpha_i=0$ and  $\hat\beta_i=1$ for $i>m$.
 In this case they prove that the process of last passage times from $(1,1)$ to points on an horizontal line $\{(k,N)\}_{k\geq1}$ converges as $N\to\infty$, under the right scaling, to the Airy process with wanderers.
In our setting it is more convenient to look at the last passage times to points on the anti-diagonal line $\{(k,\ell)\!:k,\ell\geq1,\,k+\ell=N\}$, but one expects that this leads to the same limit process.
This is actually known for several related models; in particular it is known for a version of LPP without boundary sources (i.e. $m=0$, where the limit is the Airy$_2$ process; see also \cite{Imamura-Sasamoto04,Corwin-Ferrari-Peche10}) and should follow from the slow decorrelation arguments of \cite{Ferrari08}.
We will only state roughly the expected result: for fixed $\kappa\in(0,1)$ there are constants $\gamma,c_1,c_2,c_3$ such that if $\hat\alpha_i=\kappa+\gamma N^{-1/3}a_i$ and $\hat\beta_i=-\kappa+\gamma N^{-1/3}b_i$ for $i\leq m$, with $a_i+b_i'>0$ for all $i,i'\geq1$, then
\begin{equation}\label{eq:ptpt}
c_1N^{-1/3}\big(L((1,1)\to(\tfrac12N+c_3N^{2/3}u,\tfrac12N-c_3N^{2/3}u))-c_2N\big)\xrightarrow[N\to\infty]{}\aip^{(\vec a,\vec b)}_m(u)-u^2
\end{equation}
in the sense of finite dimensional distributions (here, and below, we implicitly take integer part in non-integer positions when necessary).

Now consider \emph{point-to-line last passage times}, defined for $i,j\geq1$ with $i+j\leq N$ as
\[L^\uptext{line}_N(i,j)=\max_{k,\ell\geq1:\,k+\ell=N}L((i,j)\to(k,\ell)).\]
The validity of \eqref{eq:ptpt} would suggest that $c_1N^{-1/3}\big(L^\uptext{line}_N(1,1)-c_2N\big)$ converges in distribution as $N\to\infty$ to the random variable $\sup_{u\in\rr}\big(\aip^{(\vec a,\vec b)}_m(u)-u^2\big)$ or, in other words, that
\begin{equation}\label{eq:ptln}
\lim_{N\to\infty}\pp\big(c_1N^{-1/3}\big(L^\uptext{line}_N(1,1)-c_2N\big)\leq r)=F^{(\vec a,\vec b)}_m(r).
\end{equation}
This can be regarded as another version of the variational formula \eqref{eq:var}.

A slightly different version of the above last passage times has been related to RBMs in the recent paper \cite{FitzGerald-Warren20}. For a finite $N\in \nn$, they consider the case $\hat\beta_j=\hat\alpha_{N+1-j}$ and prove the following: if $(\zeta^\uptext{eq}_k)_{k=1,\dotsc,N}$ is a vector chosen from the stationary distribution $\pi^{(\vec b)}$ of the system of RBMs $(Z^0_k)_{k=1,\dotsc,N}$ introduced in Section \ref{sec:KPZfluct}, then taking $\hat\alpha_i=\hat\beta_{N+1-i}=b_i$, $i=1,\dotsc,N$, one has
\begin{equation}\label{eq:Yeq}
(\zeta^\uptext{eq}_1,\dotsc,\zeta^\uptext{eq}_N)\stackrel{\text{dist}}{=}(L^\uptext{line}_{N+1}(1,N),\dotsc,L^\uptext{line}_{N+1}(1,1)).
\end{equation}
The distribution of $L^\uptext{line}_{N+1}(1,1)$ is related in \cite{FitzGerald-Warren20} to that of the singular values of a certain symmetric complex Gaussian random matrix.

The parameters used in \cite{FitzGerald-Warren20} are not quite compatible with those used in \cite{Borodin-Peche08}, but they can be chosen so that the difference between the two models is small. Thus it is reasonable to expect that, with a suitable scaling and choice of parameters, the distribution of $L^\uptext{line}_{N+1}(1,1)$ should converge to $F^{(\infty,\vec b)}_m$.
Along these lines, we mention that in \cite{FitzGerald-Warren20} it is also shown that  the distribution of $L^\uptext{line}_{N+1}(1,1)$ (i.e. that of the top RBM in equilibrium) equals that of $\sup_{t\geq0}\lambda_\uptext{max}\big(H(t)-tD\big)$, where $H(t)$ is an $N\times N$ Hermitian Brownian motion, $D$ is a diagonal matrix with entries $D_{ii}=b_i$ and $\lambda_\uptext{max}(A)$ denotes the largest eigenvalue of $A$.
The evolution of the eigenvalues of $H(t)-tD$ can be mapped, under the space-time transformation $(t,\lambda)\longmapsto((1+t)/(1-t),(1-t)\lambda/2)$, to the system of non-intersecting Brownian bridges from Section \ref{sec:nonint} with $\alpha_i=0$ and $\beta_i=b_i$ (see \cite{Aptekarev-Bleher-Kuijlaars05}), but while in the case when all $b_i$'s equal $0$ this supremum has the same distribution as the maximal height of non-intersecting Brownian motions without outliers (see \cite[Sec. 2]{FitzGerald-Warren20}), one can check that this does not hold in general.
The relation between our results and those of \cite{FitzGerald-Warren20} is intriguing, and is left for future work.

\section{Fredholm determinant formula}\label{sec:DetFormula}

In this section we prove Theorem \ref{thm:DetFormula}. We follow the approach introduced in \cite{Corwin-Quastel-Remenik13}, which is based on turning the extended kernel formula \eqref{eq:awextdeter} for the finite dimensional distributions of $\aip^{(\vec a,\vec b)}_m$ into the Fredholm determinant of a certain ``path-integral'' kernel computed in $L^2(\rr)$.
This idea was used in \cite{Corwin-Quastel-Remenik13} in the setting of the Airy$_2$ process, and later generalized to a large class of kernels in \cite{Borodin-Corwin-Remenik15}.

Let $\fH$ denote the \emph{Airy Hamiltonian} $\fH=-\Delta+x$ and consider its associated semigroup $e^{-t\fH}$, which is well defined as an integral operator acting on $L^2(\rr)$ for every $t\geq0$, with integral kernel given by (see \cite{Corwin-Quastel-Remenik13})
\begin{equation}\label{eq:etH}
e^{-t\fH}(x,y)=\frac{1}{\sqrt{4\pi t}}e^{-(y-x)^2/(4t)-t(x+y)/2+t^3/12}.
\end{equation}
Define also two integral operators, $\fA^{(\vec{a},\vec{b})}_{m,s}$ and $\fB^{(\vec{a},\vec{b})}_{m,t}$, $s,t\in\rr$, acting on $L^2(\rr)$ via the kernels 
\begin{align}
\fA^{(\vec{a},\vec{b})}_{m,t}(x,y)&=\frac{1}{2\pi\I}\int_{\Gamma_{-\vec{a}-t\>}}dv\ts e^{-v^3/3+(x+y)v}\prod_{k=1}^m\frac{v-b_k+t}{v+a_k+t}\label{eq:Akernel},\\
\fB^{(\vec{a},\vec{b})}_{m,s}(x,y)&=-\frac{1}{2\pi\I}\int_{\Gamma_{\<\vec{b}-s}}du\ts e^{u^3/3-(x+y)u}\prod_{k=1}^m\frac{u+a_k+s}{u-b_k+s}\label{eq:Bkernel}
\end{align}
(with the contours as in \eqref{eq:awextkernel}).
Then choosing the contours so that $\Re(u+s-v-t)>0$ for $u\in\Gamma_{\<\vec{b}-s}$ and $v\in\Gamma_{\vec{a}-t\>}$, the kernel in \eqref{eq:awextkernel} can be rewritten as
\begin{equation}\label{eq:intAB}
\fK^{(\vec a,\vec b)}_m(s,x;t,y)=-e^{-(t-s)\fH}(x,y)\uno{s<t}+\int_0^{\infty}d\lambda\ts e^{-(s-t)\lambda}\fB^{(\vec{a},\vec{b})}_{m,s}(x,\lambda)\fA^{(\vec{a},\vec{b})}_{m,t}(y,\lambda).
\end{equation}
A straightforward computation using \eqref{eq:etH}--\eqref{eq:Bkernel} gives, for $t\geq0$ and $s\in\rr$,
\begin{equation}
\fA^{(\vec{a},\vec{b})}_{m,s}e^{-t\fH}(x,y)=e^{tx}\fA^{(\vec{a},\vec{b})}_{m,s+t}(x,y),\qquad e^{-t\fH}\fB^{(\vec{a},\vec{b})}_{m,s}(x,y)=e^{ty}\fB^{(\vec{a},\vec{b})}_{m,s-t}(x,y),\label{eq:etHAB}
\end{equation}
and as a consequence we may define $\fA^{(\tilde{a},\tilde{b})}_{m,s}e^{t\fH}(x,y)=e^{-tx}\fA^{(\vec{a},\vec{b})}_{m,s-t}(x,y)$ and $e^{t\fH}\fB^{(\tilde{a},\tilde{b})}_{m,s}(x,y)=e^{-ty}\fB^{(\vec{a},\vec{b})}_{m,s+t}(x,y)$ for all $t\in\rr$, and moreover the property $e^{t_1\fH}e^{t_2\fH}\fB^{(\tilde{a},\tilde{b})}_{m,s}=e^{(t_1+t_2)\fH}\fB^{(\tilde{a},\tilde{b})}_{m,s}$ and $\fA^{(\tilde{a},\tilde{b})}_{m,s}e^{t_1\fH}e^{t_2\fH}=\fA^{(\tilde{a},\tilde{b})}_{m,s}e^{(t_1+t_2)\fH}$ is satisfied (see e.g. \cite[Prop. 1.2]{Quastel-Remenik13a}).
Based on this it is straightforward to check that the hypotheses of \cite[Thm. 3.3]{Borodin-Corwin-Remenik15} are satisfied by the extended kernel $\fK^{(\vec a,\vec b)}_m$, taking (in the notation of that paper) $X=\rr$, $\mu$ the Lebesgue measure, ${\sf W}_{t_i,t_j}(x,y)=e^{-(t_j-t_i){\fH}}(x,y)$, and ${\sf K}_{t_i}(x,y)=\fK^{(\vec{a},\vec{b})}_{m}(t_i,x;t_i,y)$; the key fact is that
\begin{equation}
\fK^{(\vec{a},\vec{b})}_{t_i}(x,y)\coloneqq\fK^{(\vec{a},\vec{b})}_{m}(t_i,x;t_i,y)
=e^{t_i\fH}\fB^{(\vec{a},\vec{b})}_{m,0}\P_0\fA^{(\vec{a},\vec{b})}_{m,0}e^{-t_i\fH}(x,y),\label{eq:KAB}
\end{equation}
which follows from \eqref{eq:intAB} and the above definitions. 
From this and \eqref{eq:awextkernel} we conclude that
\begin{equation}\label{eq:awpathint}
\pp\!\left(\aip^{(\vec{a},\vec{b})}_{m}(t_j)\leq r_j,\,j=1,\dotsc,n\right)=\det\!\left(\fI-\fK^{(\vec{a},\vec{b})}_{t_1}+\bar\P_{r_1}e^{(t_1-t_2)\fH}\bar\P_{r_2}\dotsc\bar\P_{r_n}e^{(t_n-t_1)\fH}\fK^{(\vec{a},\vec{b})}_{t_1}\right)_{L^2(\rr)}.
\end{equation}

Consider next a function $g\in H^1([\ell_1,\ell_2])$ and take $t_1,\dotsc,t_n$ to be a mesh of $[\ell_1,\ell_2]$ and $r_j=g(t_j)$.
The limit of the right hand side of \eqref{eq:awpathint} as the mesh size goes to $0$ can be obtained from the same argument as in the case of Airy$_2$ process, see \cite[Thm. 2]{Corwin-Quastel-Remenik13}, and leads to
\begin{equation}\label{eq:becont}
\pp\!\left(\aip^{(\vec{a},\vec{b})}_{m}\leq g(t)~\forall\,t\in[\ell_1,\ell_2]\right)=\det\!\left(\fI-\fK^{(\vec{a},\vec{b})}_{\ell_1}+\Theta_{[\ell_1,\ell_2]}^{g} e^{(\ell_2-\ell_1)\fH}\fK^{(\vec{a},\vec{b})}_{\ell_1}\right)_{L^2(\rr)},
\end{equation}
where the operator $\Theta_{[\ell_1,\ell_2]}^{g}$ is defined as follows: for $f\in L^2(\rr)$, $\Theta_{[\ell_1,\ell_2]}^{g} f(x)=u(\ell_2,x)$, where $u(\ell_2,\cdot)$ is the solution at time $\ell_2$ of the boundary value problem
\begin{equation}\label{eq:boundpde}
\begin{aligned}
\partial_t u+\fH u&=0\quad\text{for}\ x<g(t),\ t\in(\ell_1,\ell_2)\\
u(\ell_1,x)&=f(x)\uno{x<g(\ell_1)}\\
u(t,x)&=0\quad \text{for}\ x\geq g(t).
\end{aligned}
\end{equation}
We will use now this formula to prove \eqref{eq:Fab-det}.

\begin{proof}[Proof of Theorem \ref{thm:DetFormula}]
The proof follows closely the argument used for the Airy$_2$ process, so we only sketch it.
Set $-\ell_1=\ell_2=L$ in \eqref{eq:becont}.
By employing the Feynman-Kac and the Cameron-Martin-Girsanov formulas (or, alternatively, changing variables in $\Theta_{[-L,L]}^{g}$) the solution of the PDE \eqref{eq:boundpde} can be expressed in terms of the probability that a Brownian motion hits the curve $g(t)-t^2$, and since we are interested in the case $g(t)=t^2+r$, this probability can be computed easily by the refection principle; the result \cite[Eqn. (1.4)]{Corwin-Quastel-Remenik13} is that
\begin{equation}\label{eq:awcont}
\Theta_{[-L,L]}^{(r)}\coloneqq\Theta_{[-L,L]}^{g(t)=t^2+r}=\bar\P_{r+L^2}e^{-2L\fH}\bar\P_{r+L^2}-\bar\P_{r+L^2}\fR_{[-L,L]}^{(r)}\bar\P_{r+L^2},
\end{equation}
where $\fR_{[-L,L]}^{(r)}(x,y)=\frac{1}{\sqrt{8\pi L}}e^{-(x+y-2r-2L^2)^2/8L-(x+y)L+2L^3/3}$.
Using this and the factorization \eqref{eq:KAB} in \eqref{eq:becont} together with the cyclic property of the Fredholm determinant we get
\begin{multline}\label{eq:cycdeter}
\pp\!\left(\aip^{(\vec{a},\vec{b})}_{m}\leq t^2+r~\forall\,t\in\rr\right)=\lim_{L\to\infty}\det\!\left(\fI-(e^{-2L\fH}-\Theta_{[-L,L]}^{(r)})e^{2L\fH}\fK^{(\vec{a},\vec{b})}_{-L}\right)_{L^2(\rr)}\\
=\lim_{L\to\infty}\det\!\left(\fI-\P_0\fA^{(\vec{a},\vec{b})}_{m,0}e^{L\fH}(e^{-2L\fH}-\Theta_{[-L,L]}^{(r)})e^{L\fH}\fB^{(\vec{a},\vec{b})}_{m,0}\P_0\right)_{L^2(\rr)},
\end{multline}
since $e^{2L\fH}\fK^{(\vec{a},\vec{b})}_{-L}=e^{L\fH}\fB^{(\vec{a},\vec{b})}_{m,0}\P_0 \fA^{(\vec{a},\vec{b})}_{m,0}e^{L\fH}$.
Next we write $e^{-2L\fH}-\Theta_{[-L,L]}^{(r)}=\fR_{[-L,L]}^{(r)}-\Omega_L$ with $\Omega_L=\bigl(\fR_{[-L,L]}^{(r)}-{\bar\P}_{r+L^2}\fR_{[-L,L]}^{(r)}{\bar\P}_{r+L^2}\bigr)-\bigl(e^{-2L\fH}-{\bar\P}_{r+L^2}e^{-2L\fH}{\bar\P}_{r+L^2}\bigr)$.
This last operator is to be regarded as an error term, and we have in fact
\begin{equation}\label{eq:OmegaL}
\P_0\fA^{(\vec{a},\vec{b})}_{m,0}e^{L\fH}\Omega_Le^{L\fH}\fB^{(\vec{a},\vec{b})}_{m,0}\P_0\xrightarrow[L\to\infty]{}0
\end{equation}
in trace norm.
The proof of this is the same as that of \cite[Lem. 1.2]{Corwin-Quastel-Remenik13}; the argument in that paper uses estimates based on a steepest descent analysis of the contour integrals defining $\Omega_L$, and the only difference in our case is that the Airy functions there are replaced here by $\fA^{(\vec a,\vec b)}_{m,0}$ and $\fB^{(\vec a,\vec b)}_{m,0}$, but it can be readily checked that the additional rational factors in the integrands in \eqref{eq:Akernel} and \eqref{eq:Bkernel} do not introduce any difficulty, see also \eqref{eq:airyGestimate} below\footnote{The only possible source of trouble is the possibility that in repeating the arguments in \cite{Corwin-Quastel-Remenik13} one may not be able to move to the steepest descent contours without crossing the poles of the rational parts, but the rescaling of the variables in the Airy functions used in that proof implies that those poles end up close to $0$ for large $L$, which precludes any issues.}.
By continuity of the Fredholm determinant with respect to the trace class topology, it follows then from this and \eqref{eq:cycdeter} that, if the limit $\Lambda^{(\vec a,\vec b)}_m=\lim_{L\to\infty}\fA^{(\vec{a},\vec{b})}_{m,0}e^{L\fH}\fR_{[-L,L]}^{(r)}e^{L\fH}\fB^{(\vec{a},\vec{b})}_{m,0}$ exists in trace class in $L^2([0,\infty))$, then
\begin{equation}
\pp\!\left(\aip^{(\vec{a},\vec{b})}_{m}\leq t^2+r~\forall\,t\in\rr\right)
=\det\!\left(\fI-\P_0\Lambda^{(\vec a,\vec b)}_m\P_0\right)_{L^2(\rr)}.\label{eq:wtB1}
\end{equation}
But in fact one has $\fR_{[-L,L]}^{(r)}=e^{-L\fH}\varrho_re^{-L\fH}$, where $\varrho_r$ denotes the reflection operator $\varrho_rf(x)=f(2r-x)$ (as can be checked directly by integration using \eqref{eq:etH} and the formula for the kernel of $\fR_{[-L,L]}^{(r)}$), so $e^{L\fH}\fR_{[-L,L]}^{(r)}e^{L\fH}$ does not depend on $L$ and we get directly that
\begin{equation}
\Lambda^{(\vec a,\vec b)}_m=\fA^{(\vec{a},\vec{b})}_{m,0}\varrho_r\fB^{(\vec{a},\vec{b})}_{m,0}.\label{eq:wtB2}
\end{equation}
Therefore, to get \eqref{eq:Fab-det} we need to show that the right hand side of \eqref{eq:wtB2} coincides with the kernel $\tilde\fB^{(\vec a,\vec b)}_{2^{2/3}r}$ from \eqref{eq:Bab}.
To prove this we note that at this stage, and just as in the contour integral formula for the Airy function, we may replace the contours $\Gamma_{-\vec a\>}$ and $\Gamma_{\<\vec b}$ in the integrals defining $\fA^{(\vec{a},\vec{b})}_{m,0}$ and $\fB^{(\vec{a},\vec{b})}_{m,0}$ by $\I\rr$ (recall all $a_i$'s and all $b_i$'s are positive).
Then
\begin{align}
\Lambda^{(\vec a,\vec b)}_m(x,y)&=\int_{-\infty}^\infty d\lambda\,\itwopii{2}\iint_{(\I\rr)^2}du\,dv\,\frac{e^{u^3/3-(2r-\lambda+y)u}}{e^{v^3/3-(x+\lambda)v}}\prod_{k=1}^m\frac{v-b_k}{v+a_k}\frac{u+a_k}{u-b_k}\\
&=\frac1{2\pi\I}\int_{\I\rr}du\,e^{2u^3/3-(2r+x+y)u}\prod_{k=1}^m\frac{-u-b_k}{-u+a_k}\frac{u+a_k}{u-b_k},
\end{align}
since the $\lambda$ integral yields simply $\delta_0(u+v)$.
This yields the desired formula.
\end{proof}

\begin{proof}[Proof of Corollary \ref{cor:KdV}]
Our distribution function $F^{(t^{1/3}\vec a,t^{1/3}\vec b)}_m(t^{-1/3}r)$ equals the Fredholm determinant of the kernel
$\frac1{2\pi\I}\int_{\langle}dw\,e^{2w^3/3-(x+y+2t^{-1/3}r)w}\prod_{k=1}^m\frac{t^{1/3}a_k+w}{t^{1/3}a_k-w}\frac{t^{1/3}b_k+w}{t^{1/3}b_k-w}$.
Changing variables $(x,y)\longmapsto(t^{-1/3}x,t^{-1/3}y)$ in the Fredholm determinant and $w\longmapsto t^{1/3}w$ in the integral we get that $F^{(t^{1/3}\vec a,t^{1/3}\vec b)}_m(t^{-1/3}r)$ also equals the Fredholm determinant of
\[\wh\fB(x,y)=\frac1{2\pi\I}\int_{\langle}dw\,e^{2w^3t/3-(x+y+2r)w}\prod_{k=1}^m\frac{a_k+w}{a_k-w}\frac{b_k+w}{b_k-w}.\]
This is a Hankel kernel, and it satisfies the differential relations $\frac{\p}{\p r}\wh\fB(x,y)=(\frac{\p}{\p x}+\frac{\p}{\p y})\wh\fB(x,y)$ and $\frac{\p}{\p t}\wh\fB(x,y)=-\frac13(\frac{\p^3}{\p x^3}+\frac{\p^3}{\p y^3})\wh\fB(x,y)$.
The argument in Section 3 of \cite{Quastel-Remenik19a} (see in particular Remark 3.1 there) implies then that $\phi$ satisfies the KdV equation.
\end{proof}

\section{Painlev\'{e} II formula}\label{sec:PII}

The aim of this section is to prove Theorem \ref{thm:PII}.
The first step is to rewrite the right hand side of \eqref{eq:Fab-det} as the Fredholm determinant of a finite rank perturbation of the kernel
\[\tfB_{2^{2/3}r}(x,y)=\inv{2\pi\I}\int_{\langle}du\,e^{2u^3/3-(x+y)u},\] 
see \eqref{eq:Btildeinfty}.
Define the functions
\begin{equation}
\psi^{b_1\dotsm b_k}_{b_1\dotsm b_m}(x)=\inv{2\pi\I}\int_{\langle}du\,e^{2u^3/3-(x+2r)u}\frac{\prod_{j=1}^k(b_j+u)}{\prod_{j=1}^m(b_j-u)},
\end{equation}
and introduce the notation
\begin{equation}
\fB^{b_1\dotsm b_k}_{b_1\dotsm b_m}(x,y)=\psi^{b_1\dotsm b_k}_{b_1\dotsm b_m}(x+y),
\end{equation}
which we will use to denote both the kernel and the integral operator associated to it (note that we have omitted from the notation the dependence of these functions and kernels on $r$).
Observe that $\fB^1_1=\tfB_{2^{2/3}r}$.
For simplicity we will also write $\psi_{b_1\dotsm b_m}\equiv\psi^1_{b_1\dotsm b_m}$ and $\fB=\fB^1_1$.
Denote also by $\fD$ the differentiation operator $\fD f=f'$.

\begin{prop}\label{prop:finiterank-B}
For $0<b_1\leq\dotsm\leq b_m$ we have
\begin{align}
F^{(\infty,\vec b)}_m(r)&=\det\!\left(\fI-\tfB_{2^{2/3}r}-\sum_{j=1}^m\psi_{b_1\dotsm b_j}\otimes\prod_{i=1}^{j-1}\delta_0(b_i-\fD)\right)_{L^2([0,\infty))}\\
&=F_1(2^{2/3}r)\det\!\left(\uno{j=k}-\left(\prod_{i=1}^j(b_i-\fD)\right)(\fI-\tfB_{2^{2/3}r})^{-1}\psi_{b_1\dotsm b_k}(0)\right)_{j,k=1}^m.\label{eq:finiterank-B}
\end{align}
\end{prop}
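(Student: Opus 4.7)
The plan is to prove the identity in two sequential steps. First, I establish the Fredholm-determinant identity (first line) by exhibiting $\tfB^{(\infty,\vec b)}_{2^{2/3}r}$ explicitly as $\tfB_{2^{2/3}r}$ plus a rank-$m$ perturbation at the determinant level, using Sylvester's identity $\det(\fI-AB)=\det(\fI-BA)$ to absorb any bulk remainder. Second, I apply the Weinstein--Aronszajn rank-$m$ identity to the first formula to derive the explicit $m\times m$ matrix formula \eqref{eq:finiterank-B}.

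The key algebraic inputs for the first step are two dual operator factorizations on $L^2([0,\infty))$. Using $(b_k-\fD_x)e^{-wx}=(b_k+w)e^{-wx}$ inside the contour integral defining $\tfB^{(\infty,\vec b)}_{2^{2/3}r}$ gives
$\tfB^{(\infty,\vec b)}_{2^{2/3}r}=\prod_{k=1}^m(b_k-\fD_x)\fB_{b_1\cdots b_m}$,
where $\fB_{b_1\cdots b_m}\coloneqq\fB^{1}_{b_1\cdots b_m}$ is the Hankel-type operator with kernel $\psi_{b_1\cdots b_m}(x+y)$. Dually, the eigenfunction-like identity $(b_k+\fD)\psi_{b_1\cdots b_j}=\psi_{b_1\cdots\hat b_k\cdots b_j}$ (from cancellation of the $(b_k-w)$ factor in the denominator) iterates to give $\prod_{k=1}^m(b_k+\fD)\psi_{b_1\cdots b_m}=\tfB_{2^{2/3}r}(\cdot,0)$, whence $\tfB_{2^{2/3}r}=\prod_{k=1}^m(b_k+\fD_x)\fB_{b_1\cdots b_m}$. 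Using the telescoping identity
\[
\prod_{k=1}^m\frac{b_k+w}{b_k-w}-1=\sum_{j=1}^m\frac{2w\prod_{i=1}^{j-1}(b_i+w)/(b_i-w)}{b_j-w}
\]
at the integrand level, $\tfB^{(\infty,\vec b)}_{2^{2/3}r}-\tfB_{2^{2/3}r}$ decomposes into $m$ pieces; for each piece, iterated integration by parts in $y$ (exploiting $\fD_x\psi(x+y)=\fD_y\psi(x+y)$) transfers the $(b_i-\fD_x)$ factors onto the test function and produces a boundary contribution at $y=0$ of the exact form $\psi_{b_1\cdots b_j}(x)\cdot[\prod_{i=1}^{j-1}(b_i-\fD)f](0)$, together with a bulk term of the form $\fB_{b_1\cdots b_m}\cdot P(\fD)$ for an odd polynomial $P$. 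Sylvester's identity applied to this bulk term together with the factorization $\tfB_{2^{2/3}r}=\prod(b_k+\fD_x)\fB_{b_1\cdots b_m}$ shows that this contribution does not alter the Fredholm determinant, completing the first identity.

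The second identity then follows by applying the Weinstein--Aronszajn rank-$m$ formula
\[
\det\Bigl(A-\sum_{j=1}^m\phi_j\otimes\ell_j\Bigr)=\det(A)\cdot\det[\delta_{jk}-\ell_j(A^{-1}\phi_k)]_{j,k=1}^m
\]
with $A=\fI-\tfB_{2^{2/3}r}$ (so that $\det(A)=F_1(2^{2/3}r)$, as noted after \eqref{eq:Btildeinfty}), $\phi_k=\psi_{b_1\cdots b_k}$, and $\ell_j=\delta_0\circ\prod_{i=1}^{j-1}(b_i-\fD)$. The resulting matrix has entries $\delta_{jk}-[\prod_{i=1}^{j-1}(b_i-\fD)(\fI-\tfB_{2^{2/3}r})^{-1}\psi_{b_1\cdots b_k}](0)$, which matches the displayed \eqref{eq:finiterank-B} up to the shift of the upper limit from $j-1$ to $j$ in the product; this shift is absorbed by unit-determinant row operations that introduce the outer factor $(b_j-\fD)$ via suitable linear combinations of adjacent rows.

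The main obstacle is the Sylvester-based cancellation of the bulk remainder: the unbounded differential operator $P(\fD)$ must be handled carefully when swapped through $\fB_{b_1\cdots b_m}$ in the trace-class framework, and the precise bookkeeping of how the boundary terms from iterated integration by parts reassemble into exactly $\sum_{j=1}^m\psi_{b_1\cdots b_j}\otimes\ell_j$ (with correct signs) requires careful tracking of indices.
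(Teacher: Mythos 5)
Your two factorizations and the Hankel integration-by-parts identity are exactly the right ingredients (they are the identities the paper's own proof is built on), and your second step --- Weinstein--Aronszajn together with $\det(\fI-\tfB_{2^{2/3}r})=F_1(2^{2/3}r)$ --- is precisely how the paper passes from the first line to the second. The genuine gap is in the assembly of your first step, and it is visible already at $m=1$. There the telescoping piece is $K_1=-2\fD\fB_{b_1}$, and the Hankel identity $\fD\fB_{b_1}+\fB_{b_1}\fD=-\psi_{b_1}\otimes\delta_0$ gives $K_1=2\fB_{b_1}\fD+2\,\psi_{b_1}\otimes\delta_0$: the boundary contribution carries coefficient $2$, not $1$, and the bulk term $2\fB_{b_1}\fD$ cannot be discarded on its own, since $\det(\fI-\tfB_{2^{2/3}r}-2\fB_{b_1}\fD-2\psi_{b_1}\otimes\delta_0)=F_1(2^{2/3}r)\bigl(1-(\fI-\tfB_{2^{2/3}r})^{-1}\psi_{b_1}(0)\bigr)$, whereas dropping the bulk would give $F_1(2^{2/3}r)\bigl(1-2\,(\fI-\tfB_{2^{2/3}r})^{-1}\psi_{b_1}(0)\bigr)$. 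The cancellation is joint between bulk and boundary, and there is no termwise factorization to which $\det(\fI-AB)=\det(\fI-BA)$ can be applied to kill the bulk alone. For $j\geq2$ the mismatch is structural, not just a constant: for $j=2$ the boundary terms come out as $2\bigl[(b_1-\fD)\psi_{b_1b_2}\bigr]\otimes\delta_0+2\,\psi_{b_1b_2}\otimes\delta_0\fD$, which is not of the form $\psi_{b_1b_2}\otimes\delta_0(b_1-\fD)$ (the derivative lands on the wrong factor and with the wrong sign), so the boundary terms do not ``reassemble'' into the claimed rank-$m$ perturbation. The mechanism that works --- and is the paper's proof --- keeps the factors $(b_k-\fD)$ intact and cycles them through the determinant one at a time, $\det(\fI-(b_m-\fD)\fB')=\det(\fI-\fB'(b_m-\fD))$, then converts $\fB'(b_m-\fD)=(b_m+\fD)\fB'+\fB'\delta_0\otimes\delta_0$ by the same integration by parts and iterates, the previously produced rank-one terms being transformed along the way; equivalently, one cyclic move plus the operator identity $\fB_{b_1\dotsm b_m}\prod_{k=1}^m(b_k-\fD)=\tfB_{2^{2/3}r}+\sum_{j=1}^m\psi_{b_1\dotsm b_j}\otimes\delta_0\prod_{i=1}^{j-1}(b_i-\fD)$ does it in one shot. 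If you rework the step this way, the only analytic issue left is justifying cyclicity with the unbounded first-order factors, which is the worry your trace-class remark correctly points at.

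The closing ``row operations'' step of your second part should be deleted rather than repaired. Weinstein--Aronszajn applied to the first line gives entries $\uno{j=k}-\prod_{i=1}^{j-1}(b_i-\fD)(\fI-\tfB_{2^{2/3}r})^{-1}\psi_{b_1\dotsm b_k}(0)$, and that is the intended statement: the upper limit $j$ in the displayed \eqref{eq:finiterank-B} is a misprint, as one sees both from how the formula is used in the proof of Proposition \ref{prop:finiterank-B-2} (the entries there carry $\prod_{\ell=1}^{j-1}(b_\ell-\fD)$) and from $m=1$, where consistency with \eqref{def_dist} and \eqref{eq:fmgformula} forces the entry $1-(\fI-\tfB_{2^{2/3}r})^{-1}\psi_{b_1}(0)$. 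No unit-determinant row operations can manufacture the extra factor $(b_j-\fD)$: for $m=1$ there are no rows to combine and the two expressions genuinely differ, and in general the functional $\delta_0\prod_{i=1}^{j}(b_i-\fD)$ involves a $j$-th derivative, which is not reachable from the lower-order functionals appearing in the rows you have.
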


\begin{proof}
Throughout the proof all Fredholm determinants are computed in $L^2([0,\infty))$.
Note
\[\fB^{b_1\dotsm b_{k+1}}_{b_1\dotsm b_j}=(b_{k+1}-\fD)\fB^{b_1\dotsm b_{k}}_{b_1\dotsm b_j}, \qquad \fB^{b_1\dotsm b_{k}}_{b_1\dotsm b_{j-1}}=(b_j+\fD)\fB^{b_1\dotsm b_{k}}_{b_1\dotsm b_j}.\]
From this and the cyclic property of the Fredholm determinant we get
\eq
\begin{aligned}
F^{(\infty,\vec b)}_m(r)&=\det\!\left(\fI - \fB^{b_1\dotsm b_m}_{b_1\dotsm b_m}\right) = \det\!\left(\fI -(b_m-\fD) \fB^{b_1\dotsm b_{m-1}}_{b_1\dotsm b_m}\right) 
=\det\!\left(\fI -\fB^{b_1\dotsm b_{m-1}}_{b_1\dotsm b_m}(b_m-\fD )\right)  \\
&=\det\!\left(\fI -\fB^{b_1\dotsm b_{m-1}}_{b_1\dotsm b_{m-1}}+(b_m+\fD)\fB^{b_1\dotsm b_{m-1}}_{b_1\dotsm b_m}-\fB^{b_1\dotsm b_{m-1}}_{b_1\dotsm b_m}(b_m-\fD )\right) \\
&=\det\!\left(\fI -\fB^{b_1\dotsm b_{m-1}}_{b_1\dotsm b_{m-1}}-\fB^{b_1\dotsm b_{m-1}}_{b_1\dotsm b_m}\de_0\otimes\de_0\right),
\end{aligned}
\eeq
where in the last equality we have used integration by parts to get $\fB^{b_1\dotsm b_{m-1}}_{b_1\dotsm b_m}\fD +\fD \fB^{b_1\dotsm b_{m-1}}_{b_1\dotsm b_m}=-\fB^{b_1\dotsm b_{m-1}}_{b_1\dotsm b_m}\de_0\otimes\de_0.$
Thus we have reduced the determinant arising in the case of $m$ wanderers to the one for the case of $m-1$ wanderers plus a rank-1 operator. 
We now repeat to reduce to the determinant of the operator for $m-2$ wanderers plus a rank-2 operator:
\begin{align}
&\det\!\left(\fI -(b_{m-1}-\fD)\fB^{b_1\dotsm b_{m-2}}_{b_1\dotsm b_{m-1}}-(b_{m-1}-\fD)(\fB^{b_1\dotsm b_{m-2}}_{b_1\dotsm b_m}\de_0)\otimes\de_0\right)  \\
&\hspace{0.5in}=\det\!\left(\fI -\fB^{b_1\dotsm b_{m-2}}_{b_1\dotsm b_{m-1}}(b_{m-1}-\fD)-(\fB^{b_1\dotsm b_{m-2}}_{b_1\dotsm b_m}\de_0)\otimes\de_0(b_{m-1}-\fD)\right) \\
&\hspace{0.5in}=\det\!\left(\fI- \fB^{b_1\dotsm b_{m-2}}_{b_1\dotsm b_{m-2}}+(b_{m-1}-\fD)\fB^{b_1\dotsm b_{m-2}}_{b_1\dotsm b_{m-1}}-\fB^{b_1\dotsm b_{m-2}}_{b_1\dotsm b_{m-1}}(b_{m-1}-\fD)\right. \\
&\hspace{3.7in}\left.-(\fB^{b_1\dotsm b_{m-2}}_{b_1\dotsm b_m}\de_0)\otimes\de_0(b_{m-1}-\fD)\right) \\
&\hspace{0.5in}=\det\!\left(\fI- \fB^{b_1\dotsm b_{m-2}}_{b_1\dotsm b_{m-2}} - (\fB^{b_1\dotsm b_{m-2}}_{b_1\dotsm b_{m-1}}\de_0)\otimes\de_0-(\fB^{b_1\dotsm b_{m-2}}_{b_1\dotsm b_m}\de_0)\otimes\de_0(b_{m-1}-\fD)\right),
\end{align}
and repeating this $m$ times we arrive at
\[\textstyle\det\!\left(\fI -\fB -\sum_{j=1}^m \left(\fB_{b_1\dotsm b_j}\de_0\otimes \de_0\right)\prod_{i=1}^{j-1} (b_i-\fD)\right),\]
which gives the first formula.
The second one follows from the first one and the matrix determinant lemma together with \eqref{eq:TWdistr2}, which gives $\det(\fI-\wt\fB_{2^{2/3}r})=F_1(2^{2/3}r)$.
\end{proof}

The next step is to turn \eqref{eq:finiterank-B} into a formula involving only the functions $\psi_{b_i}$.

\begin{prop}\label{prop:finiterank-B-2}
For $0<b_1\leq\dotsm\leq b_m$ we have
\begin{equation}
F^{(\infty,\vec b)}_m(r)=\frac{1}{\prod_{1\leq j<k\leq m}(b_k-b_j)}F_1(2^{2/3}r)\det\!\left(b_k^{j-1}-\fD^{j-1}(\fI - \fB)^{-1}\psi_{b_k}(0)\right)_{j,k=1}^m.
\end{equation}
\end{prop}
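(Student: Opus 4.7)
The plan is to take as starting point the matrix form of the Fredholm determinant given by Proposition \ref{prop:finiterank-B}, namely $F^{(\infty,\vec b)}_m(r)=F_1(2^{2/3}r)\det(\delta_{jk}-L_{j-1}h_k(0))_{j,k=1}^m$ with $L_{j-1}=\prod_{i=1}^{j-1}(b_i-\fD)$ (and $L_0=\fI$) and $h_k=(\fI-\fB)^{-1}\psi_{b_1\dotsm b_k}$, and to factor the matrix $M=(L_{j-1}h_k(0))_{j,k}$ as a product $M=RBC$ where $B_{sk}=\fD^{s-1}\tilde h_k(0)$ (with $\tilde h_k=(\fI-\fB)^{-1}\psi_{b_k}$) is already the matrix appearing in the target formula, and the triangular factors $R,C$ satisfy the key identity $CR=A^{-1}$ for $A_{jk}=b_k^{j-1}$ the transposed Vandermonde matrix. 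Since $\det(A)=\prod_{j<k}(b_k-b_j)$, this will produce the Vandermonde denominator in the statement.

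The two decompositions I would use are: \emph{(columns)} partial fractions give $\psi_{b_1\dotsm b_k}=\sum_{i=1}^k c_i^{(k)}\psi_{b_i}$ with $c_i^{(k)}=\prod_{\ell\le k,\,\ell\neq i}(b_\ell-b_i)^{-1}$, hence $h_k=\sum_i c_i^{(k)}\tilde h_i$; collecting the coefficients into $C_{ik}=c_i^{(k)}$ yields an upper-triangular matrix $C$. \emph{(Rows)} Expanding $L_{j-1}$ as a polynomial in $\fD$ gives $L_{j-1}=\sum_{s=1}^j R_{js}\fD^{s-1}$ with $R_{js}=(-1)^{s-1}e_{j-s}(b_1,\dotsc,b_{j-1})$, a lower-triangular matrix $R$. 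Combining these, $M_{jk}=\sum_{s,i}R_{js}\,\fD^{s-1}\tilde h_i(0)\,c_i^{(k)}=(RBC)_{jk}$, and by the Sylvester identity $\det(\fI-RBC)=\det(\fI-BCR)$. So the proof reduces to showing $CR=A^{-1}$, after which $\det(\fI-BA^{-1})=\det((A-B)A^{-1})=\det(A-B)/\det(A)$ finishes the job, since $(A-B)_{jk}=b_k^{j-1}-\fD^{j-1}\tilde h_k(0)$.

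To prove $CR=A^{-1}$, observe that $(CR)_{ij}$ is the coefficient of $x^{j-1}$ in the polynomial
\[Q_i(x):=\sum_{\ell=i}^m c_i^{(\ell)}\prod_{k=1}^{\ell-1}(b_k-x).\]
The plan is to identify $Q_i$ with the $i$-th Lagrange basis polynomial $\ell_i(x)=\prod_{k\neq i}(x-b_k)/(b_i-b_k)$, whose coefficient of $x^{j-1}$ is precisely $(A^{-1})_{ij}$. Since both polynomials have degree at most $m-1$, it suffices to verify $Q_i(b_s)=\delta_{is}$: the cases $s\leq i$ are immediate (for $s<i$ every term in $Q_i(b_s)$ vanishes because $\prod_{k=1}^{\ell-1}(b_k-b_s)$ contains $(b_s-b_s)$ whenever $\ell>s$, and for $s=i$ only the $\ell=i$ term survives, which simplifies to $c_i^{(i)}\prod_{k=1}^{i-1}(b_k-b_i)=1$), while for $s>i$ the sum telescopes, or equivalently the $c_i^{(\ell)}$ are (up to a sign $(-1)^{\ell-1}$) the divided differences of $\ell_i$ at the nodes $b_1,\dotsc,b_\ell$, so $Q_i$ is precisely Newton's form of $\ell_i$. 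I expect the main obstacle to be just this verification of $CR=A^{-1}$; the rest is routine manipulation of triangular matrices, the Sylvester identity, and finite-dimensional determinant identities.
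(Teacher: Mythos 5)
Your proposal is correct, and it takes a genuinely different route from the paper's. Both proofs rest on the same underlying structure — the triangular factorization of the Vandermonde matrix $A=(b_k^{j-1})_{j,k}$ — but you package it very differently. The paper works directly on the $m\times m$ determinant $\det(\delta_{jk}-M_{jk})$: it first iterates column operations using the recursion $\psi_{a_1\dotsm a_\ell}=\tfrac{1}{a_\ell-a_{\ell-1}}(\psi_{a_1\dotsm a_{\ell-1}}-\psi_{a_1\dotsm a_{\ell-2}a_\ell})$ to strip the $\psi_{b_1\dotsm b_k}$ down to $\psi_{b_k}$, collecting the Vandermonde prefactor and an upper-triangular matrix $U$; it then iterates row operations (multiplication by an explicit lower-triangular $L$) to replace $\prod(b_i-\fD)$ by $\fD^{j-1}$; and finally it identifies $LU$ as the known $LU$ decomposition of the Vandermonde matrix by a recursion on complete homogeneous symmetric polynomials. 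Your approach instead factors $M=RBC$ once and for all, applies the Sylvester identity to move to $\det(\fI-BCR)$, and reduces everything to the single clean identity $CR=A^{-1}$, which you recognize as the statement that the coefficients $c_i^{(\ell)}$ (with the sign $(-1)^{\ell-1}$ absorbed into $\prod_{k<\ell}(b_k-x)$) are precisely the Newton divided differences $[b_1,\dotsc,b_\ell]\ell_i$ of the Lagrange basis polynomial $\ell_i$. This replaces the paper's two separate telescoping inductions and its symmetric-function recursion with one structural observation about Newton's interpolation formula; it is shorter and, to my mind, more transparent. One caveat you share with the paper: both proofs implicitly require the $b_i$'s distinct (partial fractions / Lagrange interpolation; the paper's \eqref{psipsipsi} also needs $a_{\ell-1}\neq a_\ell$), and the coincident case is handled by continuity / l'H\^opital as noted after Theorem~\ref{thm:PII}. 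Incidentally, you correctly use $L_{j-1}=\prod_{i=1}^{j-1}(b_i-\fD)$; the display \eqref{eq:finiterank-B} has a typo ($\prod_{i=1}^j$ instead of $\prod_{i=1}^{j-1}$), which is contradicted by the paper's own proof and by the $m=1$ sanity check against \eqref{eq:fmgformula}.
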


\begin{proof}
For a given $\vec a=(a_1,\dotsc,a_m)$ introduce the Vandermonde determinant
\[\textstyle\Delta_m(\vec a)=\det\!\big(a_k^{j-1}\big)_{j,k=1}^m=\prod_{1\leq j<k\leq m}(a_k-a_j).\]
We will use repeatedly the formula
\begin{equation}
\psi_{a_1\dotsm a_\ell}=\tfrac{1}{a_\ell-a_{\ell-1}}\big(\psi_{a_1\dotsm a_{\ell-1}}-\psi_{a_1\dotsm a_{\ell-2}a_\ell}\big),\label{psipsipsi}
\end{equation}
valid for any $\vec a=(a_1,\dotsc,a_\ell)$ with $a_{\ell-1}\neq a_\ell$, to manipulate the determinant on the right hand side of \eqref{eq:finiterank-B} (where for $\ell=2$ the quantity $\psi_{a_1\dotsm a_{\ell-2}a_\ell}$ is interpreted as $\psi_{a_2}$).
Applying the formula with $\vec a=(b_1,\dotsc,b_m)$, the last column of the determinant becomes
\[\left({\bf 1}_{j=m}-\tfrac1{b_m-b_{m-1}}\left({\textstyle\prod_{\ell=1}^{j-1}(b_\ell-\fD)}\right)(\fI - \fB)^{-1}(\psi_{b_1\dotsm b_{m-1}}-\psi_{b_1\dotsm b_{m-2}b_m})(0)\right)_{j=1,\dotsc,m}.\]
Now we take the factor $\frac1{b_m-b_{m-1}}$ outside the determinant and then subtract the $(m-1)$-th column in order to remove the term involving $\psi_{b_1\dotsm b_{m-1}}$.
The result is the same determinant which we started with, multiplied by $\frac1{b_m-b_{m-1}}$, but with the last column replaced by
\[\left((b_m-b_{m-1}){\bf 1}_{j=m}-{\bf 1}_{j=m-1}+\left({\textstyle\prod_{\ell=1}^{j-1}(b_\ell-\fD)}\right)(\fI - \fB)^{-1}\psi_{b_1\dotsm b_{m-2}b_m}(0)\right)_{j=1,\dotsc,m}.\]
Note that we have managed to remove the factor involving $b_{m-1}$ from $\psi_{b_1\dotsm b_{m}}$.
We repeat now the same procedure inductively, using \eqref{psipsipsi} with $\vec a=(b_1,\dotsc,b_{m-\ell},b_m)$, from $\ell=2$ up to $\ell=m-1$.
The result is a determinant which is now premultiplied by $\prod_{\ell=1}^{m-1}\frac1{b_m-b_\ell}$ and where the last column is replaced by
\[\textstyle\left((-1)^{m+j}\prod_{\ell=1}^{j-1}(b_m-b_{\ell})+(-1)^{m}\left({\textstyle\prod_{\ell=1}^{j-1}(b_\ell-\fD)}\right)(\fI - \fB)^{-1}\psi_{b_m}(0)\right)_{j=1,\dotsc,m}.\]
We may repeat now the whole procedure in each of the columns $m-\ell$, from $\ell=2$ to $\ell=m-2$.
As we do this we keep pulling factors outside the determinant, and once everything is done the resulting prefactor is clearly $\prod_{1\leq\ell<\ell'\leq m}\frac1{b_{\ell'}-b_\ell}=\frac1{\Delta_m(\vec b)}$.
The end result is that the determinant on the right hand side of \eqref{eq:finiterank-B} has been replaced by
\begin{multline}\label{deDet2}
\textstyle\frac1{\Delta_m(\vec b)}\det\!\left((-1)^{j+k}U_{j,k}+(-1)^k\left({\textstyle\prod_{\ell=1}^{j-1}(b_\ell-\fD)}\right)(\fI - \fB)^{-1}\psi_{b_k}(0)\right)_{j,k=1}^m\\
=\textstyle\frac1{\Delta_m(\vec b)}\det\!\left(U_{j,k}+(-1)^j\left({\textstyle\prod_{\ell=1}^{j-1}(b_\ell-\fD)}\right)(\fI - \fB)^{-1}\psi_{b_k}(0)\right)_{j,k=1}^m,
\end{multline}
with
\[U_{j,k}=\prod_{\ell=1}^{j-1}(b_k-b_{\ell}){\bf 1}_{j\leq k}.\]

Let us now denote by $e_k^{(n)}$ the $k$-th elementary symmetric polynomial on the $n$ variables $(b_1, \dots, b_n)$, i.e.
\eq\label{def:ek}
e_k^{(n)}=e_k(b_1, \dots, b_n) = \sum_{1\le i_1 < i_2 < i_k \le n} b_{i_1}\dots b_{i_k}.
\eeq
Then we may expand
\[
\prod_{\ell = 1}^{j-1} (b_\ell - \fD) = \sum_{\ell= 0}^{j-1} (-1)^\ell e_{j-1-\ell}^{(j-1)} \fD^{\ell}.
\]
Note that the coefficient of $\fD^{j-1}$ is $(-1)^{j-1} e_0^{(j-1)} = (-1)^{j-1}$.
We then proceed using row operations to eliminate all terms involving the factor $\fD^\ell$ with $\ell<j-1$ from the $j$-th row on the determinant on the right hand side of \eqref{deDet2}.
This is equivalent to multiplication on the left by a lower triangular matrix $L$, defined as the product
\[
L = L_{m-1}L_{m-2} \dotsm L_{1},
\]
where $L_k$ consists of $1$'s along the diagonal and $0$'s elsewhere except in the $k$-th column below the main diagonal; those entries are 
\[
(L_k)_{k+\ell,k} =(-1)^{\ell-1} e_{\ell}^{(k+\ell-1)}, \qquad \ell=1, \dots , m-k.
\]
The right hand side of \eqref{deDet2} then equals
\begin{equation}\label{eq:LUjk}
\tfrac1{\Delta_m(\vec b)}\det\!\left((LU)_{jk}-\fD^{j-1}(\fI - \fB)^{-1}\psi_{b_k}(0)\right)_{j,k=1}^m.
\end{equation}

All that remains is to compute $LU$.
But it turns out that this is just the $LU$ decomposition of the Vandermonde matrix in $\vec b$:
\[(LU)_{jk}=b_k^{j-1}.\]
In fact, it is known \cite{Oruc-Phillips00} that\footnote{The formulas presented here differ from the ones in \cite[Thm. 2.2]{Oruc-Phillips00} by transpose and moving the diagonal part of the decomposition from $\bar L$ to $\bar U$.}
\[b_k^{j-1}=(\bar L\bar U)_{jk}\]
for 
\[\bar U_{jk} = \prod_{\ell=1}^{j-1}(b_k - b_{\ell}), \quad j\le k, \qqand
\bar L_{jk} = \tau_{k-j}(b_1, \dots b_k),\quad j\ge k,\]
where $\tau_n(b_1, \dots b_k)$
\eq\label{def:homog}
\tau_n(b_1, \dots b_k) \coloneqq \sum_{1\le i_1\le i_2 \le \cdots \le i_n \le k} b_{i_1} b_{i_2} \cdots b_{i_n}
\eeq
is the complete homogeneous symmetric polynomial of order $n$ in $k$ variables. 
$\bar U$ is exactly the matrix $U$ which we obtained above.
For the lower triangular part recall that the diagonal entries of $L$ equal $1$ and note that, from its definition, its entries below the diagonal satisfy the recursion
\begin{align}
\textstyle L_{k+\ell,k} &=- \sum_{\ell'=0}^{\ell-1}(-1)^{\ell+\ell'} L_{k+\ell', k} e_{\ell-\ell'}^{(k+\ell-1)} \\
&= \sum_{j=1}^{\ell}(-1)^{j+1} L_{k+\ell-j, k} e_{j}^{(k+\ell-1)}
\end{align}
for $\ell=1, \dotsc , m-k$.
We claim that the complete homogeneous symmetric polynomials \eqref{def:homog} satisfy the same recursion, i.e.
\eq\label{eq:taurecursion}
\textstyle \tau_{\ell}(b_1,\dotsc,b_k) = \sum_{j=1}^{\ell}(-1)^{j+1} \tau_{\ell-j}(b_1,\dotsc,b_k) e_{j}^{(k+\ell-1)},\qquad \ell=1, \dotsc , m-k.
\eeq
When the number of variables for $\tau_j$ and $e_j$ are the same, this is a well known identity relating the elementary and complete symmetric polynomials, see e.g, \cite[Eqn. 2.6']{MacDonald95}, which is easily proven using generating functions. Here the same proof works.
The two families of polynomials have generating functions
$\sum_{j=0}^\infty (-t)^j e_j^{k+\ell-1} = \prod_{i=1}^{k+\ell-1}(1-b_i t)$, $\sum_{j=0}^\infty t^j \tau_j(b_1,\dotsc,b_k) = \prod_{i=1}^{k}(1-b_i t)^{-1}$.
Taking their product, we find 
\eq
\bigg(\sum_{j=0}^\infty (-t)^j e_j^{k+\ell-1} \bigg)\bigg(\sum_{j=0}^\infty t^j \tau_j(b_1,\dotsc,b_k) \bigg) =  \prod_{i=k+1}^{k+\ell-1}(1-b_i t).
\eeq
The $t^\ell$ term on the left hand side of the above equation is exactly the difference between the left and right hand sides of \eqref{eq:taurecursion} and the $t^\ell$ term on the right side is zero, so \eqref{eq:taurecursion} follows.

Since the sequences $(L_{k+\ell,k})_{\ell=0}^{m-k}$ and $(\bar L_{k+\ell,k})_{\ell=0}^{m-k}$ satisfy the same recursion and have the same initial condition $L_{kk} = \bar L_{kk} =1$, it follows that $L=\bar L$.
We deduce then that $(LU)_{jk} = (\bar L \bar U)_{jk}= b_k^{j-1}$ which, along with \eqref{eq:LUjk}, proves the proposition.
\end{proof}

In order to compare the formula given in the previous result with our PII formula for $F^{(\infty,\vec b)}_m$ we will use the following formulas for the functions $f$ and $g$, given in \cite{Baik06}
\begin{equation}
\begin{aligned}
f(2^{2/3}r,2^{1/3}b)&=1-(\fI-\fB^2)^{-1}\fB\psi_b(0),\\
g(2^{2/3}r,2^{1/3}b)&=(\fI-\fB^2)^{-1}\psi_b(0).
\end{aligned}
\end{equation}
We note here that the PII solution $u$ presented in \cite{Baik06} differs from the function $q$ we defined in \eqref{eq:HM} by sign. Comparing the Lax pair \eqref{eq:Lax1} and \eqref{eq:Lax2} with the one presented in \cite{Baik06}, we find that our function $f$ is the same as the function $f$ from \cite{Baik06}, whereas our $g$ differs from the one in \cite{Baik06} by sign. The above formulas then follow from Theorem 1.1 and the remark following Lemma 1.4 in that paper.
Then since $(\fI-\fB^2)^{-1}(\fI+\fB)=(\fI-\fB)^{-1}$ we get
\begin{equation}
\label{eq:fmgformula}
f(2^{2/3}s,2^{1/3}b)-g(2^{2/3}s,2^{1/3}b) = 1-(\fI-\fB)^{-1}\psi_b(0).
\end{equation}

\begin{proof}[Proof of Theorem \ref{thm:PII}]
In view of the formula in Proposition \ref{prop:finiterank-B-2} and \eqref{eq:fmgformula}, we need to prove that
\begin{equation}\label{deDet}
\det\!\left(b_k^{j-1}-\fD^{j-1}(\fI - \fB)^{-1}\psi_{b_k}(0)\right)_{j,k=1}^m=\det\!\begin{pmatrix}\left(b_k + D_r\right)^{j-1}\left(1-(\fI - \fB)^{-1}\psi_{b_k}(0)\right)\end{pmatrix}_{j,k=1}^m.
\end{equation}
The key will be to prove that for any $b$ and any $j=1,2,\dots,m-1$,
\begin{equation}
\alpha_j\big(1-(\fI - \fB)^{-1} \psi_b(0)\big)+b^{j}-\fD^{j}(\fI - \fB)^{-1} \psi_b(0)
=(b+D_r)\left(b^{j-1}-\fD^{j-1}(\fI - \fB)^{-1}\psi_{b}(0)\right),\label{eq:bbsHb}
\end{equation}
where $\alpha_j$ is a constant which does not depend on $b$. 
To see why, denote by $G$ and $H$ the matrices appearing in left and right hand sides of \eqref{deDet}, and note first that their first rows are trivially equal, while \eqref{eq:bbsHb} with $j=1$ states exactly that the second row of $H$ equals the second row of $G$ plus a multiple of the first row of $G$. 
We claim that in fact, for $j=2,\dotsc,m$, the $j$-th row of $H$ equals the $j$-th row of $G$ plus a linear combination of the first $j-1$ rows of $G$, which we can prove by induction. Assume this is true for the $(j-1)$th row, i.e.,
\[H_{j-1,k} = G_{j-1,k}+\sum_{\ell = 1}^{j-2} \beta_\ell G_{\ell k},\]
where the coefficients $\beta_\ell$ do not depend on $k$.
Applying $(b_k+D_r)$ to the left hand side of the above equation transforms $H_{j-1,k}$ into $H_{jk}$. Applying it to the right hand side, we can use \eqref{eq:bbsHb}, which implies that $(b_k+D_r)G_{\ell k}=G_{\ell +1,k}+\alpha_{\ell}G_{1k}$. The result is
\[H_{jk} = G_{jk}+\alpha_{j}G_{1k}+\sum_{\ell = 1}^{j-2}\beta_\ell(G_{\ell+1,k}+\alpha_{\ell+1}G_{1k})
=G_{jk}+(\alpha_2+\dotsm+\alpha_{j})G_{1k}+\sum_{\ell = 2}^{j-1}\beta_{\ell-1}G_{\ell k},\]
which completes the induction step. This implies that $G$ and $H$ differ from one another by elementary row operations, and thus $\det(G) = \det(H)$.

Let us then prove \eqref{eq:bbsHb}. 
The argument will be reminiscent of some of the computations in \cite{Tracy-Widom94}.
The identity is equivalent to 
\begin{multline}
\label{eq:com1}
\alpha_j\big(1-(\fI - \fB)^{-1} \psi_b(0)\big) = \fD^{j-1}\Big(\fD(\fI - \fB)^{-1}\psi_b(0)- b(\fI - \fB)^{-1}\psi_b(0)\\
-\left(D_r(\fI - \fB)^{-1}\right)\psi_b(0)-(\fI - \fB)^{-1}D_r\psi_b(0)\Big).
\end{multline}
Now we use the formulas $D_r(\fI - \fB)^{-1}=(\fI - \fB)^{-1}D_r\fB(\fI - \fB)^{-1}$, $D_r\fB=2\fB'$ (here $\fB'=\fD\fB$) and $D_r\psi_b=2\psi_b'$ to get that the RHS of \eqref{eq:com1} equals
\[
\fD^{j-1}\!\left(\fD(\fI - \fB)^{-1}\psi_b(0)\tsm-\tsm b(\fI - \fB)^{-1}\psi_b(0)\tsm-\tsm2(\fI - \fB)^{-1}\fB'(\fI - \fB)^{-1}\psi_b(0)\tsm-\tsm2(\fI - \fB)^{-1}\psi_b'(0)\right).
\]
Next we use the general formula $[\fD,(\fI - \fB)^{-1}]=(\fI - \fB)^{-1}[\fD,\fB](\fI - \fB)^{-1}$, where $[\cdot,\cdot]$ denotes the commutator, together with integration by parts, which gives $[\fD,\fB]=2\fB'+\fB\delta_0\otimes\delta_0$, to see that the above equals
\[
\fD^{j-1}\left((\fI - \fB)^{-1}\fB(0,0)\,(\fI - \fB)^{-1}\psi_b(0)- b(\fI - \fB)^{-1}\psi_b(0)-(\fI - \fB)^{-1}\psi_b'(0)\right).
\]
But $\psi_b'=\fB\delta_0-b\psi_b$, so the last expression equals
\begin{multline}
\fD^{j-1}\left((\fI - \fB)^{-1}\fB(0,0)\,(\fI - \fB)^{-1}\psi_b(0)- (\fI - \fB)^{-1}\fB(0,0)\right) \\
=-\fD^{j-1}(\fI - \fB)^{-1}\fB(0,0)\left(1-(\fI - \fB)^{-1}\psi_b(0)\right).
\end{multline}
Thus  \eqref{eq:bbsHb} holds with $\alpha_j=-\fD^{j-1}(\fI - \fB)^{-1}\fB(0,0)$.
\end{proof}

\section{KPZ fixed point characterization}\label{sec:kpz}

The aim of this section is to prove Theorem \ref{thm:KPZfp}.
The proof has several steps, and we start by describing them briefly.
Our first task will be to construct an initial condition for TASEP which approximates $\cz^{(\vec b)}_\equ$ and for which exact formulas can be obtained.
However, these exact methods work best for TASEP initial data which is one-sided (meaning that in the particle system there is a rightmost particle), so we will actually first approximate an initial condition $\fh^{(\vec b)}_0$ which corresponds essentially to the top path $(Z_m(t))_{t\geq0}$ of the one-sided system of RBMs, all started at the origin.
The next step will be to compute the asymptotics of the TASEP height function with this choice of initial data, which gives a Fredholm determinant formula for the KPZ fixed point with initial condition given by $\fh^{(\vec b)}_0$.
Finally we will focus on a location far from the origin and take a limit in the formula to recover, on one side, the KPZ fixed point with initial condition given by $\cz^{(\vec b)}_\equ$, and on the other the Fredholm determinant appearing in \eqref{eq:Fab-det}.

\vskip6pt

\begin{rmk}\label{rem:multi}
In order to not overload notation, throughout the section we will only write formulas for the one-point distribution of TASEP and the KPZ fixed point.
However, everything we will do can be extended to the multi-point distributions as in \cite{Matetski-Quastel-Remenik17} without any issues.
\end{rmk}

Throughout this section all Fredholm determinants are computed in either $\ell^2(\zz)$ (for discrete kernels) or $L^2(\rr)$ (for continuous ones).

\subsection{TASEP formulas}

We begin with a brief description of the TASEP formulas derived in \cite{Matetski-Quastel-Remenik17}.
Recall the particle system description of the process, given in Section \ref{sec:KPZfluct}.
We will only be interested in the case where the initial data (and thus the process at all times) has a rightmost particle (i.e. a rightmost occupied site).
The process may be represented by specifying the locations $X_1(t)>X_2(t)>\dotsm$ of the particles at each given time $t$.
The transition probabilities for TASEP with $N$ particles were found in \cite{Schutz97} using the coordinate Bethe ansatz. They are given by 
\begin{equation}\label{eq:Green}
\pp_{X_0}(X_t(1)= x_1,\ldots,X_t(N)=x_N)=\det(F_{i-j}(t,x_{N+1-i}-X_0(N+1-j)))_{1\leq i,j\leq N}
\end{equation}
with
\begin{equation}\label{eq:Fn}
F_{n}(t,x)=\frac{(-1)^n}{2\pi \I} \oint_{\Gamma_{0,1}}\d w\,\frac{(1-w)^{-n}}{w^{x-n+1}}\psi_t(w),
\end{equation}
where $\psi_t(w)=e^{t(w-1)}$ and $\Gamma_{0,1}$ is any positively oriented simple loop which includes $w=0$ and $w=1$.
However, when computing the scaling limit \eqref{eq:TASEPtoFP} one actually needs a usable formula for the distribution function $\pp_{X_0}(X_t(n)>a)$.
In \cite{Borodin-Ferrari-Prahofer-Sasamoto07,Sasamoto05}, \eqref{eq:Green} was turned into a Fredholm determinant formula for this distribution function, which depends on a kernel which can be computed based on solving a certain biorthogonalization problem for a family of functions constructed out of $F_n$ and the specific initial data under consideration.
In those papers the biorthogonalization problem was solved for half-periodic initial data ($X_0(i)=-2i$, $i\geq1$), which ultimately allowed them to prove convergence to the Airy$_1$ process in the case of fully periodic initial data ($X_0(i)=-2i$, $i\in\zz$).
The solution for general (one-sided) initial data was obtained much later in \cite{Matetski-Quastel-Remenik17}, and it involves transition probabilities of a random walk forced to hit a curve defined by the initial data.
After postprocessing, it leads to the following.
Define
\begin{align}
 \SM_{-t,-n}(z_1,z_2) &=\frac{1}{2\pi\I} \oint_{\Gamma_0}\d w\, \frac{(1-w)^{n}}{2^{z_2-z_1} w^{n +1 + z_2 - z_1}}e^{tw/2}\tts\psi_t(w),\label{def:sm}\\
 \SN_{-t,n} (z_1,z_2) &=\frac{1}{2 \pi \I} \oint_{\Gamma_{0}} \d w\,\frac{(1-w)^{z_2-z_1 + n - 1}}{2^{z_1-z_2} w^{n}} e^{-tw/2}\frac1{\psi_t(1-w)},\label{def:sn}
\end{align}
where $\Gamma_0$ is now a positively oriented simple loop which includes the pole at $w=0$ but not the one at $w=1$.
Let also $\tau$ be the hitting time of the strict epigraph of the discrete curve $\big((k,X_0(k+1))\big)_{k=0,\dotsc,n-1}$ by a discrete time random walk $(V_n)_{n\geq0}$ with Geom$[\frac12]$ jumps supported on the strictly negative integers, and define
\begin{equation}\label{eq:sepi23}
{\SN}_{-t,n}^{\epi(X_0)}(z_1,z_2) = \ee_{B_0=z_1}\!\left[ \SN_{-t,n - \tau}(V_{\tau}, z_2){\bf 1}_{\tau<n}\right].
\end{equation}
Then for any $n\geq1$, $t>0$ and $a\in\zz$,
\begin{equation}\label{eq:TASEPformula}
\pp_{X_0}\big(X_t(n)>a\big)=\det\!\left(I-\bar\P_a(\SM_{-t,-n})^*\SN^{\epi(X_0)}_{-t,n}\bar\P_a\right).
\end{equation}

Next we introduce a related particle system: \emph{discrete time PushTASEP with left geometric jumps}.
As before, we have a one-sided system of particles evolving on $\zz$, which we label as  $X^\circ_t(1)>X^\circ_0(2)>\dotsm$.
Each (discrete time) step of the process is run as follows: particles are updated from right to left; each particle makes a Geom$[q]$ jump to the left (supported in $\{0,-1,-2,\dotsc\}$, this means that a jump to the left of size $k$ has probability $(1-q)^kq$), and then pushes all the particles it finds in its way in order to keep the ordering.
The transition probabilities for the $N$-particle system can be expressed just like for TASEP: for $\ell\in\nn$,
\begin{equation}\label{eq:GreenPush}
\pp_{X_0^\circ} (X^\circ_\ell(1)= x_1,\ldots,X^\circ_\ell(N)=x_N)=\det(F^\circ_{i-j}(\ell,x_{N+1-i}-X_0(N+1-j)))_{1\leq i,j\leq N},
\end{equation}
where $F^\circ_n$ is the same function as in \eqref{eq:Fn} except that $\psi_t$ is replaced by $\psi^\circ_\ell(w)=\left(\frac{q}{1-(1-q)\tts w^{-1}}\right)^\ell$ and the contour needs to encircle $1-q$ as well as $0$ and $1$ including the pole at $1-q$, see \cite{Dieker-Warren08}.

The TASEP initial data which we want to consider corresponds to the one obtained by starting with some given, one-sided initial condition $X^\circ_0$ and applying $m$ discrete time PushTASEP steps, where we allow the parameter $q=q_k$ in the $k$-th PushTASEP step to depend on $k$.
We will denote this choice as $X^\circ_m(i)$, the dependence on the parameters $q_k$, $k=1,\dotsc,m$, remaining implicit.
The transition probabilities for TASEP with $N$ particles can be obtained directly by convolving \eqref{eq:Green} and \eqref{eq:GreenPush} ($m$ times with $\ell=1$) and it follows from an argument based on \cite{Dieker-Warren08,Johansson10} that the result is given once again by a formula like those two, where the function $\psi$ appearing in the contour integral formula \eqref{eq:Fn} is now replaced by
\begin{equation}
\psi^\circ_{m,t}(w)=\psi^\circ(w)\psi_t(w)=e^{t(w-1)}\prod_{j=1}^m\tfrac{q}{1-(1-q_j)\tts w^{-1}};\label{eq:psimt}
\end{equation}
the argument is provided in \cite{Matetski-Quastel-Remenik20}.
Moreover, the same paper shows that the whole (long) derivation which goes from \eqref{eq:Green} to \eqref{eq:TASEPformula} is valid when $\psi_t$ is replaced by $\psi^\circ_{m,t}$.
As a consequence, we have:

\begin{prop}\label{prop:TASEPdTASEP}
Consider continuous time TASEP with initial data $X^\circ_m$ as described above.
Then for any $n\geq1$, $t>0$ and $a\in\zz$,
\begin{equation}\label{eq:TASEPformula-o}
\pp_{X^\circ_m}\!\big(X_t(n)>a\big)=\det\!\left(I-\bar\P_a(\SM^\circ_{-t,-n})^*\SN^{\circ,\epi(X^\circ_0)}_{-t,n}\bar\P_a\right),
\end{equation}
where ${\SN}_{-t,n}^{\circ,\epi(X_0)}(z_1,z_2) = \ee_{B_0=z_1}\!\left[ \SN^\circ_{-t,n - \tau}(V_{\tau}, z_2){\bf 1}_{\tau<n}\right]$ with $V_n$ and $\tau$ defined as in \eqref{eq:sepi23} and $\SM^\circ_{-t,-n}$ and $\SN^{\circ,\epi(X^\circ_0)}_{-t,n}$ are defined like \eqref{def:sm} and \eqref{def:sn} with $\psi_t$ replaced by $\psi^\circ_{m,t}$ and the contour $\Gamma_0$ encircling $0$ and $1-q_k$ for each $k=1,\dotsc,m$, but not $1$.
\end{prop}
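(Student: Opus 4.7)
My plan is to obtain Proposition \ref{prop:TASEPdTASEP} in two steps: first derive a Sch\"utz-type determinantal Green's function for the combined ``$m$ PushTASEP steps followed by continuous-time TASEP'' dynamics, and then feed this Green's function through the biorthogonalization procedure of \cite{Matetski-Quastel-Remenik17} to produce the Fredholm determinant \eqref{eq:TASEPformula-o}.

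For the first step, the idea is that both \eqref{eq:Green} and \eqref{eq:GreenPush} have exactly the same determinantal structure, differing only in the function $\psi$ appearing inside the contour integral in $F_n$ (respectively $\psi_t$ and $\psi^\circ_\ell$). If one convolves the two $N$-particle kernels $F_{i-j}$ and $F^\circ_{i-j}$, the Andr\'eief/Cauchy--Binet identity together with the convolution property of the functions $F_n$ (which on the contour integral side just multiplies the $\psi$'s) yields another determinant of the same form whose kernel is constructed from the single function
\begin{equation}
\psi^\circ_{m,t}(w)=e^{t(w-1)}\prod_{j=1}^m\frac{q_j}{1-(1-q_j)\,w^{-1}}.
\end{equation}
Iterating this $m$ times gives the Green's function for starting from $X^\circ_0$, running $m$ steps of PushTASEP with parameters $q_1,\dots,q_m$, and then running continuous-time TASEP for time $t$. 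This is precisely the claim proved in \cite{Matetski-Quastel-Remenik20} that the derivation is valid with $\psi_t$ replaced by $\psi^\circ_{m,t}$.

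For the second step, the derivation in \cite{Matetski-Quastel-Remenik17} that passes from the Sch\"utz-type formula to \eqref{eq:TASEPformula} uses only two ingredients: the structural form of the kernel $F_n$ as a contour integral with an arbitrary analytic factor $\psi$, and the biorthogonalization of an associated family of functions which is then solved via a random walk hitting time representation. Crucially, this random walk comes from factoring out the purely geometric pieces of the transition mechanism and is insensitive to the choice of $\psi$, so $V_n$ remains a Geom$[\frac12]$ walk and $\tau$ the hitting time of the epigraph of $X^\circ_0$ (not $X^\circ_m$). The effect of the PushTASEP prefactors is confined to the definitions of $\SM$ and $\SN$, which become $\SM^\circ_{-t,-n}$ and $\SN^\circ_{-t,n}$ with $\psi_t$ replaced by $\psi^\circ_{m,t}$.

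The only genuine point to check is the contour in \eqref{def:sm}--\eqref{def:sn}: the factors $\frac{q_j}{1-(1-q_j)w^{-1}}$ introduce extra simple poles at $w=1-q_j$, $j=1,\dots,m$, and the derivation of the biorthogonal formulas in \cite{Matetski-Quastel-Remenik17} requires $\Gamma_0$ to enclose every singularity of $\psi^\circ_{m,t}$ that sits strictly inside the unit disk while still excluding $w=1$. Choosing the $q_j\in(0,1)$ so that $1-q_j\in(0,1)$ and enlarging $\Gamma_0$ to enclose all these poles (but still not $w=1$) makes all the contour deformations in the proof go through verbatim. This is the only place where care is needed; I expect it to be the main obstacle only in the bookkeeping sense, since one has to track the enlarged contour through every step of the biorthogonalization in \cite{Matetski-Quastel-Remenik17}, but no new analytic input is required. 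Once this is done, \eqref{eq:TASEPformula-o} follows by reading off the resulting formula.
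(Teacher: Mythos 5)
Your proposal matches the paper's own argument essentially step for step: the paper likewise obtains the Sch\"utz-type formula for the combined PushTASEP-then-TASEP dynamics with $\psi_t$ replaced by $\psi^\circ_{m,t}$ (delegating the convolution argument, based on \cite{Dieker-Warren08,Johansson10}, to \cite{Matetski-Quastel-Remenik20}) and then invokes the fact that the biorthogonalization leading from \eqref{eq:Green} to \eqref{eq:TASEPformula} goes through for this $\psi$, with the contour enlarged to enclose the poles at $1-q_k$ but not $1$, and with the random walk hitting time still taken with respect to the epigraph of $X^\circ_0$. The only minor caveat is that your appeal to Andr\'eief/Cauchy--Binet glosses over the fact that the intermediate summation runs over ordered configurations (the Weyl chamber) rather than all of $\zz^N$, so the composition is not a plain Cauchy--Binet computation; but since you, like the paper, ultimately cite \cite{Matetski-Quastel-Remenik20} for this step, this is not a gap relative to the paper's own proof.
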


In our application to the proof of Theorem \ref{thm:KPZfp} we will be interested in the case where the initial configuration from which the $m$ discrete time PushTASEP steps are run is the half-periodic one, i.e. $X^\circ_0(i)=-2i$, $i\geq1$.
However, since it introduces no additional difficulty, throughout the next two sections we work with a general choice of $X^\circ_0$, under the assumption that it converges under diffusive scaling.

\subsection{Limit of the initial data}\label{sec:inilim}

We need to compute the limit of $X^\circ_m$ as initial data under the scaling which takes TASEP to the KPZ fixed point.
From \cite[Eqn. (3.4)]{Matetski-Quastel-Remenik17}, what we need is to compute the limit as $\ep\to0$ of
\begin{equation}
\fX^\ep_m(x)\coloneqq-\ep^{1/2}(X^\circ_m(\ep^{-1}x)+2\ep^{-1}x-1);\label{eq:X0sc}
\end{equation}
the corresponding initial condition for the KPZ fixed point will then be given by $\lim_{\ep\to0}\fX^\ep_m(-x)$ if $x\leq0$ and $-\infty$ otherwise.

The evolution of the discrete time PushTASEP particle system can be constructed recursively as follows: for $k\geq1$, and given the state of the process at time $k-1$, let
\[X^\circ_k(i)=\min\{X^\circ_k(i-1)-1,X^\circ_{k-1}(i)\}-\xi^k_i,\quad i\geq1,\]
where we set with $X^\circ_k(0)=\infty$ for all $k$ (so that $X^\circ_k(1)=X^\circ_{k-1}(1)-\xi^k_1$), and where the $(\xi^k_i)_{i,k\geq1}$ are i.i.d. Geom$[q_k]$ random variables.
Recentering around $-2i+1$ and reflecting, we define $\wh X^\circ_k(i)=-(X^\circ_k(i)+2i-1)$, so that the above recursion becomes
\[\wh X^\circ_k(i)=\max\{\wh X^\circ_k(i-1),\wh X^\circ_{k-1}(i)+1\}+(\xi^k_i-1),\quad i\geq1.\]
Now we fix $k\geq1$ and analyze $(\wh X^\circ_k(i))_{i\geq1}$ as a Markov chain in $i$.
From the above recursion, this Markov chain can be thought of as a (time-inhomogeneous) random walk with Geom$[q_k]$ steps (at time $k$) supported on $\{-1,0,1,\dotsc\}$, except that every time it hits the shifted curve $(\wh X^\circ_{k-1}(i+1))_{i\geq1}$ it receives an extra push up by $1$ on the next time step (note that from the dynamics and the initial ordering of the particles, $\wh X^\circ_k(i-1)$ is always greater than or equal to $\wh X^\circ_{k-1}(i)$).
With this in mind, defining
\[S_k(i)=\sum_{\ell=1}^i(\xi^k_i-1)\qqand A_k(i)=\sup_{\ell=1,\dotsc,i}\big(S_k(\ell-1)-\wh X^\circ_{k-1}(\ell)-1\big)^-\]
(here $x^-=-x\uno{x<0}$), one checks that
\begin{equation}\label{eq:whXSA}
\wh X^\circ_k(i)=S_k(i)+A_k(i),\quad i\geq2.
\end{equation}
Defining $\fX^\ep_k(x)$ as in \eqref{eq:X0sc}, \eqref{eq:whXSA} yields
\[\fX^\ep_k(x)=S^\ep_k(x)+A^\ep_k(x),\]
with
\[S^\ep_k(x)=\ep^{1/2}S_k(\ep^{-1}x)\qqand A^\ep_k(x)=\sup_{\ell=1,\dotsc,\ep^{-1}x}\big(S^\ep_k(x-\ep)-\fX^\ep_{k-1}(x)-1\big)^-\]
for $x\in\ep\nn$.
We extend these functions linearly to all $x\geq0$ and then define recursively $\tilde A^\ep_k(x)=\sup_{y\in[0,x]}\big(S^\ep_k(y)-\fX^\ep_{k-1}(x)-1\big)^-$ and
\begin{equation}\label{eq:tildeXSA}
\tilde\fX^\ep_k(x)=S^\ep_k(x)+\tilde A^\ep_k(x).
\end{equation}
It is easy to see that $|\fX^\ep_k(x)-\tilde\fX^\ep_k(x)|\leq2\ep^{1/2}$, so we may just compute the limit of $\tilde\fX^\ep_k$.

Recall Skorokhod's reflection mapping (see e.g. \cite[Lem. VI.2.1]{Revuz-Yor99}) which, given two continuous functions $f,z\!:[0,\infty)\longrightarrow\rr$, $f(0)\geq z(0)$, constructs the reflection of $f$ off $z$ as
\begin{equation}\label{eq:sk-refl}
{\cal R}_zf(t)=f(t)+\sup_{s\in[0,t]}\big(f(s)-z(s)\big)^-.
\end{equation}
This provides, in particular, one of the standard constructions of the reflection of a (drifted) Brownian motion off a continuous function.
On the other hand, it is not hard to see that for fixed $z$ the mapping $f\longmapsto{\cal R}_zf$ is continuous with respect to the topology of uniform convergence in compact subsets of $[0,\infty)$.
And we have, from \eqref{eq:tildeXSA}, that $\tilde\fX^\ep_k={\cal R}_{\tilde\fX^\ep_{k-1}}\!\!\!S^\ep_k$.
Choosing the $q_k$ parameters as $q_k=\tfrac12(1+\ep^{1/2}b_k)$, Donsker's invariance principle implies that each of the scaled random walks $S^\ep_k$ converges to a Brownian motion with diffusivity $2$ and drift $-2b_k$ (all Brownian motions below will implicitly have diffusivity $2$).
Suppose, on the other hand, that $X^\circ_0$ is chosen in such a way that $\fX^\ep_0\longrightarrow\ff_0$ in distribution, uniformly on compact sets, for some continuous (possibly random) function $\ff_0\!:[0,\infty)\longrightarrow\rr$.
We deduce that $\fX^\ep_1(x)$ converges in distribution to a Brownian motion with drift $-2b_1$ reflected off $\ff_0$ and, inductively, that $\fX^\ep_k(x)$ converges in distribution to a Brownian motion with drift $-2b_k$, started at the origin, and reflected off the $(k-1)$-th path.
This is the system of RBMs $(Z_k)_{k=1,\dotsc,m}$ introduced in Section \ref{sec:KPZfluct}, except that now the first path $Z_1$ is reflected off $\ff_0$ instead of the origin; we will denote it as $(Z^{\ff_0}_k)_{k=1,\dotsc,m}$.
We have proved:

\begin{prop}\label{prop:fXeplim}
Assume that $\fX^\ep_0\longrightarrow\ff_0$ in distribution, uniformly on compact sets.
Then $(\fX^\ep_k)_{k=1,\dotsc,m}$ converges in distribution, in the topology of uniform convergence on compact sets, to the system of reflected Brownian motions with drift with a wall at $\ff_0$, $(Z^{\ff_0}_k)_{k=1,\dotsc,m}$.
\end{prop}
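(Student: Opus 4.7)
My plan is to prove the proposition by induction on $k$, exploiting the recursive identity $\tilde\fX^\ep_k={\cal R}_{\tilde\fX^\ep_{k-1}}\tts S^\ep_k$ already derived in the excerpt. The first step is to replace $\fX^\ep_k$ by $\tilde\fX^\ep_k$ throughout, which is legitimate because the uniform bound $|\fX^\ep_k(x)-\tilde\fX^\ep_k(x)|\le 2\ep^{1/2}$ implies the two sequences have the same distributional limit in $C([0,\infty))$, if any. It then suffices to prove that $(\tilde\fX^\ep_1,\dots,\tilde\fX^\ep_m)$ converges in distribution, uniformly on compact subsets of $[0,\infty)$, to $(Z^{\ff_0}_1,\dots,Z^{\ff_0}_m)$.

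Next I would establish the joint convergence of the driving data. Under the scaling $q_k=\tfrac12(1+\ep^{1/2}b_k)$ one computes $\ee[\xi^k_i-1]=-b_k\ep^{1/2}+\ord{\ep}$ and $\operatorname{Var}(\xi^k_i)\to 2$, so Donsker's theorem gives $S^\ep_k\Rightarrow B_k$ in $C([0,\infty))$, where $B_k$ is a Brownian motion of diffusivity $2$ with drift $-2b_k$. Because for distinct $k$ the families $\{\xi^k_i\}_{i\geq1}$ are independent and are also independent of $X^\circ_0$, this upgrades directly to the joint convergence $(\fX^\ep_0,S^\ep_1,\dots,S^\ep_m)\Rightarrow(\ff_0,B_1,\dots,B_m)$, with the $B_k$'s mutually independent and independent of $\ff_0$.

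The final step is a continuous mapping argument. The Skorokhod reflection map $(z,f)\longmapsto{\cal R}_z f$ from \eqref{eq:sk-refl} is jointly continuous in both arguments in the topology of uniform convergence on compact subsets of $[0,\infty)$, as is visible from the explicit formula together with the uniform continuity of the running supremum on compacts. Applied inductively this gives: granting the joint convergence $(\fX^\ep_0,\tilde\fX^\ep_1,\dots,\tilde\fX^\ep_{k-1},S^\ep_k,\dots,S^\ep_m)\Rightarrow(\ff_0,Z^{\ff_0}_1,\dots,Z^{\ff_0}_{k-1},B_k,\dots,B_m)$, the continuous mapping theorem applied to the pair $(\tilde\fX^\ep_{k-1},S^\ep_k)$ yields $\tilde\fX^\ep_k\Rightarrow Z^{\ff_0}_k$, where $Z^{\ff_0}_k={\cal R}_{Z^{\ff_0}_{k-1}}B_k$ is precisely the $k$-th RBM in the system; since ${\cal R}$ acts only on two coordinates of the vector, joint convergence with the remaining coordinates is preserved, closing the induction. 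The main obstacle, modest as it is, lies in tracking the joint convergence through $m$ recursive applications of ${\cal R}$; it is made painless by the independence of each $S^\ep_k$ from the preceding coordinates, which ensures no loss of joint distributional information at each step.
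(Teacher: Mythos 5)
Your proposal is correct and follows essentially the same route as the paper: reduce to $\tilde\fX^\ep_k$ via the bound $|\fX^\ep_k-\tilde\fX^\ep_k|\le 2\ep^{1/2}$, apply Donsker to each $S^\ep_k$, and pass the joint limit through the Skorokhod map inductively (your explicit use of joint continuity of $(z,f)\longmapsto{\cal R}_zf$ and of the independence structure just makes precise what the paper leaves implicit). One small slip: with $q_k=\tfrac12(1+\ep^{1/2}b_k)$ one has $\ee[\xi^k_i-1]=-2b_k\ep^{1/2}+\ord{\ep}$, not $-b_k\ep^{1/2}+\ord{\ep}$; your stated limiting drift $-2b_k$ is nevertheless the correct one, so the conclusion is unaffected.
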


\subsection{Scaling limit}

Our next task is to compute the scaling limit of the TASEP formula \eqref{eq:TASEPformula} with initial data as specified in the last section.
We fix $r,x\in\rr$ and choose
\[n=\tfrac12\ep^{-3/2}t-\ep^{-1}x-\tfrac12\ep^{-1/2}a+1,\qquad a=2\ep^{-1}x-2.\]
In view of Proposition \ref{prop:fXeplim}, and since uniform convergence implies convergence in the Hausdorff topology, from Proposition 3.6 or Theorem 3.13 in \cite{Matetski-Quastel-Remenik17} we deduce that the left hand side of \eqref{eq:TASEPformula-o} converges to $\pp_{\cz^{\ff_0}}(\fh(t,x)\leq r)$, where the initial data is chosen as
\begin{equation}\label{eq:czdef}
\cz^{\ff_0}(x)=\begin{dcases*}Z^{\ff_0}_m(-x) & if $x\leq0$\\-\infty & if $x>0$.\end{dcases*}
\end{equation}
The limit of the Fredholm determinant on the right hand side of \eqref{eq:TASEPformula-o} also follows from the arguments \cite{Matetski-Quastel-Remenik17}.
Note that the Fredholm determinant only depends directly on $X^\circ_0$, and not on $X^\circ_k$ for $k=1,\dotsc,m$.
The Fredholm determinant considered in \cite{Matetski-Quastel-Remenik17} is exactly the same one with $m=0$, so it is enough to explain how to hanlde the extra rational factors in the integrand coming from the PushTASEP part.
We do this next.

In taking the limit of \eqref{def:sm} and \eqref{def:sn}, \cite{Matetski-Quastel-Remenik17} uses the change of variables $w\longmapsto\frac12(1-\ep^{1/2}\tilde w)$.
Recall from Proposition \ref{prop:TASEPdTASEP} that the contour $\Gamma_0$ encircles $0$ and $1-q_k$ for each $k$, but not $1$.
Given our choice $q_k=\frac12(1+\ep^{1/2}b_k)$, the new contour after scaling has to encircle $\ep^{-1/2}$ and all $b_k$'s, but not $-\ep^{1/2}$, and hence (for small $\ep$) we may choose it to be a circle $C_\ep$ of radius $\ep^{-1/2}$ centered at $\ep^{-1/2}$, as in \cite{Matetski-Quastel-Remenik17}.
After this change of variables, the pointwise limit of the integrand in \eqref{def:sm} and \eqref{def:sn} is the same as in \cite{Matetski-Quastel-Remenik17} except for the additional factors coming from the rational perturbation in \eqref{eq:psimt}.
Moreover, it can be checked that the steepest descent arguments used in Appendix B of that paper to upgrade this to trace class convergence of the whole operator are not affected by these additional factors; the argument is lengthy but the adaptation is straightforward, so we omit it (the crucial points being, first, that since the additional poles at $\tilde w=b_k$ lie inside the contour $C_\ep$ and all the necessary deformations of it, and second, that the required estimates depend on terms of order $\ep^{-3/2}$ in the exponent after writing the integrand as $e^{F_\ep(w)}$, whereas the rational perturbation is of order $1$).
The upshot is that it is enough to compute the limit of the rational perturbations in $\psi_t(w)$ and $1/\psi_t(1-w)$ after scaling.
To this end we multiply $(\SM_{-t,-n})^*$ by $(2\ep)^{m/2}$ and ${\SN}_{-t,n}^{\epi(X^\circ_0)}$ by $(2\ep)^{-m/2}$ and note that, as $\ep\to0$,
\[2\ep^{1/2}\frac{q}{1-(1-q)\tts w^{-1}}\longrightarrow\frac1{b-\tilde w},\qquad\frac{\ep^{-1/2}}{2}\frac{1-(1-q)\tts(1-w)^{-1}}{q}\longrightarrow\tilde w+b.\]
In view of this and \cite[Lem. 3.5]{Matetski-Quastel-Remenik17} we define the operators
\[\fT_{t,x}^{\vec b,-}(z)=\frac1{2\pi\I}\int_{\langle}\ts dw\,e^{\frac{t}3 w^3+xw^2+zw}\prod_{k=1}^m\frac1{b_k-w},\quad\fT_{t,x}^{\vec b,+}(z)=\frac1{2\pi\I}\int_{\langle}\ts dw\,e^{\frac{t}3 w^3+xw^2+zw}\prod_{k=1}^m(b_k+w),\]
where the contours start at the origin and go off in rays at angles $\pm\pi/3$ with the first one crossing the real axis to the left of all $b_i$'s.
Define also $\fT_{-t,x}^{\vec b,\pm}(z)=\fT_{t,x}^{\vec b,\pm}(-z)$, and let
\begin{equation}
\fT^{\hypo(\ff_0),\vec b,+}_{t,x}(v,u)=\ee_{\fB(0)=v}\big[\fT^{\vec b,+}_{t,x-\tau}(B(\tau),u)\uno{\tau<\infty}\big]\label{eq:hypofh0}
\end{equation}
where $\tau$ is the hitting time of the hypograph of $\ff_0$ by a Brownian motion $B(x)$.
These operators coincide with those defined in \cite[Sec. 3]{Matetski-Quastel-Remenik17} in the case $m=0$, and all the arguments there apply to our case without difference (see \cite{Matetski-Quastel-Remenik20}).
The result is then that, under this scaling, $\det\!\left(I-\bar\P_a(\SM_{-t,-n})^*{\SN}_{-t,n}^{\epi(X^\circ_0)}\bar\P_a\right)$ converges, as $\ep\to0$, to $\det\!\left(\fI-\P_r(\fT^{\vec b,-}_{t,x})^*\fT_{t,-x}^{\hypo(\ff_0),\vec b,+}\P_r\right)$.
We deduce:

\begin{prop}\label{prop:genZform}
For any $t>0$ and $x,r\in\rr$ we have
\eq
\pp\!\left(\fh(t,x;\cz^{\ff_0})\leq r\right)=\det\!\left(\fI-\P_r(\fT^{\vec b,-}_{t,x})^*\fT_{t,-x}^{\hypo(\ff_0),\vec b,+}\P_r\right).\label{eq:fpff0}
\eeq
\end{prop}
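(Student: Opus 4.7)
The plan is to pass to the limit $\ep\to 0$ in the TASEP formula \eqref{eq:TASEPformula-o} of Proposition \ref{prop:TASEPdTASEP}, applied with the initial data $X^\circ_m$ described above, the scaling $n=\frac12\ep^{-3/2}t-\ep^{-1}x-\frac12\ep^{-1/2}a+1$ and $a=2\ep^{-1}x-2$ from \cite[Sec. 3]{Matetski-Quastel-Remenik17}, and the PushTASEP parameters $q_k=\frac12(1+\ep^{1/2}b_k)$. For the left hand side, Proposition \ref{prop:fXeplim} gives that $\fX^\ep_m$ converges in distribution, uniformly on compact sets, to $Z^{\ff_0}_m$. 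Uniform convergence implies convergence of hypographs in the local Hausdorff topology, so the general convergence of TASEP to the KPZ fixed point under $\UC$-convergent initial data (\cite[Prop. 3.6 and Thm. 3.13]{Matetski-Quastel-Remenik17}) yields $\pp_{X^\circ_m}(X_t(n)>a)\xrightarrow[\ep\to 0]{}\pp(\fh(t,x;\cz^{\ff_0})\leq r)$, with $\cz^{\ff_0}$ as in \eqref{eq:czdef}.

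For the right hand side, I would perform the change of variables $w\mapsto\frac12(1-\ep^{1/2}\tilde w)$ in \eqref{def:sm}--\eqref{def:sn} (with $\psi_t$ replaced by $\psi^\circ_{m,t}$). The rescaled contour $\Gamma_0$ can be taken to be the circle $C_\ep$ of radius $\ep^{-1/2}$ centered at $\ep^{-1/2}$ used in \cite{Matetski-Quastel-Remenik17}: for small $\ep$ it encloses $0$ and the new poles $\tilde w=b_k$ but not $-\ep^{-1/2}$, matching the requirement on the original contour. After compensating the kernels by the factors $(2\ep)^{\pm m/2}$ (which cancel when forming $(\SM^\circ)^*\SN^\circ$), the rational perturbations in $\psi^\circ_{m,t}(w)$ and $1/\psi^\circ_{m,t}(1-w)$ converge pointwise to $\prod_k(b_k-\tilde w)^{-1}$ and $\prod_k(b_k+\tilde w)$ respectively, while the purely exponential part of the integrand yields the same limiting exponent $\frac{t}{3}w^3+xw^2+zw$ as in the $m=0$ case. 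This produces exactly the operators $\fT^{\vec b,-}_{t,x}$ and $\fT^{\vec b,+}_{t,-x}$. The epigraph factor $\SN^{\circ,\epi(X^\circ_0)}_{-t,n}$ is then handled as in \cite[Sec. 3]{Matetski-Quastel-Remenik17}: the random walk with $\mathrm{Geom}[\frac12]$ steps converges under diffusive scaling to a Brownian motion, the hitting time $\tau$ of the epigraph of $X^\circ_0$ becomes the hitting time of the hypograph of $\ff_0$, and the remaining kernel $\SN^\circ_{-t,n-\tau}$ produces $\fT^{\vec b,+}_{t,-x-\ts\cdot}$, so that in the limit one obtains $\fT^{\hypo(\ff_0),\vec b,+}_{t,-x}$ as defined in \eqref{eq:hypofh0}.

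The main obstacle is upgrading these pointwise limits to trace norm convergence of the composed operator inside the Fredholm determinant, since only the latter guarantees convergence of the determinant itself. For $m=0$ this is the purpose of the steepest descent analysis carried out in Appendix B of \cite{Matetski-Quastel-Remenik17}; I would verify that the additional rational factors do not spoil those bounds. Two observations make this routine: first, the poles $\tilde w=b_k$ lie inside $C_\ep$ and inside every deformation of it used in the steepest descent argument (since the relevant critical point and steepest descent rays have $\Re(\tilde w)$ bounded away from the $b_k$'s for small $\ep$), so no unwanted residues are picked up when deforming to the steepest descent contour; second, the dominant part of the exponent in each integrand is of order $\ep^{-3/2}$, whereas the rational perturbation is uniformly bounded (in $\ep$) along the steepest descent contour, so it contributes only a harmless multiplicative factor in the Gaussian estimates. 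With those two observations in place, every estimate in \cite[App. B]{Matetski-Quastel-Remenik17} goes through with essentially no change, yielding trace norm convergence of $\bar\P_a(\SM^\circ_{-t,-n})^*\SN^{\circ,\epi(X^\circ_0)}_{-t,n}\bar\P_a$ to $\P_r(\fT^{\vec b,-}_{t,x})^*\fT^{\hypo(\ff_0),\vec b,+}_{t,-x}\P_r$ and hence the identity \eqref{eq:fpff0}.
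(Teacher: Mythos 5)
Your proposal is correct and takes essentially the same approach as the paper's proof: the same choice of $n$, $a$ and $q_k=\frac12(1+\ep^{1/2}b_k)$, convergence of the left hand side via Proposition \ref{prop:fXeplim} and the $\UC$-continuity results of \cite{Matetski-Quastel-Remenik17}, the change of variables $w\mapsto\frac12(1-\ep^{1/2}\tilde w)$ with the contour $C_\ep$, the $(2\ep)^{\pm m/2}$ compensation producing the limiting factors $\prod_k(b_k-\tilde w)^{-1}$ and $\prod_k(b_k+\tilde w)$, and the two observations (poles $\tilde w=b_k$ stay inside all contour deformations; the rational perturbation is $O(1)$ against the $\ep^{-3/2}$ exponent) that let the Appendix B steepest descent estimates carry over to give trace norm convergence. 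The only cosmetic slip is that the pole at $w=0$ maps to $\tilde w=\ep^{-1/2}$ rather than to $\tilde w=0$, which changes nothing in the argument.
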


As we mentioned in Remark \ref{rem:multi}, everything we have done can be extended without any difficulty to multi-point distributions for $\fh(t,\cdot;\cz^{\ff_0})$, leading to a Fredholm determinant formula involving an extended version of the kernel on the right hand side of \eqref{eq:fpff0}, in the same way as in \cite{Matetski-Quastel-Remenik17}.
The distributional identity \eqref{eq:airywkpzfp} which we stated for the Airy process with wanderers follows directly from this: we need to take $\ff_0$ to be the $\UC$ function which is equal to $0$ at $0$ and $-\infty$ everywhere else, and in that case the hitting time $\tau$ in \eqref{eq:hypofh0} equals $0$ if $v\leq0$ and $\infty$ otherwise, so $\fT^{\hypo(\ff_0),\vec b,+}_{t,x}=\bar\P_0\fT^{\vec b,+}_{1,x}$, and using this in \eqref{eq:fpff0} gives the result (for the multi-point distribution an additional, simple change of variables is needed to turn the heat kernels appearing in the KPZ fixed point formulas into the semigroup $e^{-t\fH}$).

Finally we consider our case of interest, namely half-periodic initial data for the PushTASEP dynamics $X^\circ_0(i)=-2i$, $i\geq1$, which in the scaling \eqref{eq:X0sc} means that $\fX^\ep_0(x)\longrightarrow0$ for $x\leq0$ (and $-\infty$ for $x>0$).
We wish then to compute $\fT_{t,-x}^{\hypo(0),\vec b,+}$, and this can be done explicitly by the reflection principle, since it only involves the passage time below the origin for the Brownian motion $B$.
This was done in \cite[Prop. 3.7]{Quastel-Remenik19} (see also \cite[Sec. 4.4]{Matetski-Quastel-Remenik17}) in the case $m=0$, and the argument extends to general $m$ without difficulty.
It yields $\fT_{t,-x}^{\hypo(0),\vec b,+}=\bar\P_0(\fI+\varrho_0)\fT_{t,-x}^{\vec b,+}$, and thus we get
\begin{equation}\label{eq:Z0form}
\pp\!\left(\fh(t,x;\cz^0)\leq r\right)=\det\!\left(\fI-\P_r(\fT^{\vec b,-}_{t,x})^*\bar\P_0(\fI+\varrho_0)\fT_{t,-x}^{\vec b,+}\P_r\right).
\end{equation}

\subsection{Stationary limit}\label{sec:statlim}

The last step in the proof of Theorem \ref{thm:KPZfp} consists in taking $x\to-\infty$ in \eqref{eq:Z0form}.
We do this in two separate results:

\begin{prop}\label{prop:stat1}
\[\lim_{x\to-\infty}\det\!\left(\fI-\P_r(\fT^{\vec b,-}_{t,x})^*\bar\P_0(\fI+\varrho_0)\fT_{t,-x}^{\vec b,+}\P_r\right)=\det\!\left(\fI-\P_0\tfB^{(\infty,\vec b)}_{t,2^{2/3}r}\P_0\right)\]
where
\begin{equation}\label{eq:Babt}
\tfB^{(\infty,\vec b)}_{t,2^{2/3}r}(z_1,z_2)=\frac1{2\pi\I}\int_{\langle}dw\,e^{2tw^3/3-(z_1+z_2+2r)w}\prod_{k=1}^m\frac{b_k+w}{b_k-w},
\end{equation}
with the contour $\langle$ passing to the left of all the $b_i$'s.
\end{prop}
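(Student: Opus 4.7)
The plan is to identify the limit of the kernel explicitly and then upgrade pointwise convergence to trace-class convergence.

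\emph{First}, I would compute the kernel of $\mathcal{K}^x \coloneqq (\fT^{\vec b,-}_{t,x})^*\bar\P_0(\fI+\varrho_0)\fT_{t,-x}^{\vec b,+}$ by substituting the contour-integral representations of $\fT^{\vec b, \pm}$ and performing the integration over the intermediate variable $y \in (-\infty, 0]$ coming from $\bar\P_0(\fI + \varrho_0)$. Using $\int_{-\infty}^0 e^{y(u \pm v)}\, dy = \frac{1}{u \pm v}$ (valid when $\Re(u \pm v) > 0$, which can be arranged by choice of the contours for $u$ and $v$), this yields a double contour integral with integrand
\[
e^{\frac{t}{3}(u^3+v^3) + x(u^2-v^2) + z_1 u + z_2 v} \prod_{k=1}^m \frac{b_k+v}{b_k-u} \cdot \left(\frac{1}{u+v} + \frac{1}{u-v}\right).
\]
The change of variables $z_i \mapsto z_i + r$ in the Fredholm determinant converts $\P_r$ into $\P_0$ and introduces a factor $e^{r(u+v)}$ in the integrand, which accounts for the appearance of $2r$ in the target kernel $\tfB^{(\infty,\vec b)}_{t,2^{2/3}r}$.

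\emph{Second}, I would extract the limit $x \to -\infty$ by contour deformation. The factor $e^{x(u^2 - v^2)}$ localizes the integrand on the locus $u^2 = v^2$, i.e.\ at the poles $v = \pm u$ of the rational prefactor $\frac{1}{u+v} + \frac{1}{u-v} = \frac{2u}{u^2-v^2}$. Deforming the $v$-contour past the appropriate pole (with care to not cross the poles at $v = -b_k$) picks up a residue which, after a suitable change of variables ($u \mapsto -w$ with a contour reversal), reduces to a single contour integral that can be identified with $\tfB^{(\infty,\vec b)}_{t,2^{2/3}r}(z_1,z_2)$. On the deformed (``saddle'') contour the factor $e^{x(u^2-v^2)}$ provides exponential decay in $|x|$ away from the critical locus, so the residual double contour integral vanishes as $x \to -\infty$.

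\emph{Third}, I would upgrade the resulting pointwise kernel convergence to trace-class convergence in $L^2([0, \infty))$, using steepest-descent estimates on the integrand analogous to those deployed in the proof of Theorem \ref{thm:DetFormula} (see the argument around \eqref{eq:OmegaL}); the rational factors $\prod_k \frac{b_k+v}{b_k-u}$ are uniformly bounded on the chosen contours for large $|x|$ and do not affect the decay estimates. This gives convergence of the Fredholm determinants.

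\emph{Main obstacle.} The delicate point is the contour-deformation analysis in the second step: the deformed contours must simultaneously (i) avoid the poles $\{b_k\}$ of $\prod_k \frac{1}{b_k-u}$ and $\{-b_k\}$ of $\prod_k (b_k+v)$, (ii) produce the right residue (with matching sign and the characteristic product $\prod_k \frac{b_k+w}{b_k-w}$ in the target), and (iii) give the exponential decay required to kill the remainder. Balancing these constraints and carefully tracking contour orientations through the various changes of variables is the main bookkeeping challenge; the trace-class upgrade in the third step also requires care to ensure the estimates are uniform in $x$.
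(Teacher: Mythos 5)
Your overall plan---write the kernel as an explicit double contour integral, identify the limit with the contribution of the reflection term, kill the remainder by steepest descent, and upgrade to trace-norm convergence---is the same strategy as the paper's proof, but two points in your key second step are wrong or unjustified as stated. First, the identity $\int_{-\infty}^0 e^{\eta(u-v)}\,d\eta=\frac1{u-v}$ needs $\Re(u-v)>0$ uniformly on the product of contours, which cannot hold when both contours are of Airy type (rays at angles $\pm\pi/3$, both with real part tending to $+\infty$), and those are exactly the contours required for the cubic factors to converge. The paper avoids this by splitting $\bar\P_0(\fI+\varrho_0)=\bar\P_0+\varrho_0-\P_0\varrho_0$: for the $\varrho_0$ piece the intermediate integral runs over all of $\rr$ and, after deforming to vertical lines, produces $\delta_0(w-v)$, so that this piece \emph{is} the limit kernel exactly (no asymptotics needed), while the other two pieces are pure error terms; your residue-at-$v=u$ device is the same computation in disguise, but then the bookkeeping must be done correctly: in your integrand $\prod_k\frac{b_k+v}{b_k-u}$ the poles sit at $u=b_k$ (the points $v=-b_k$ are zeros, not poles), and when contours are moved across the $b_k$'s one picks up $m$ additional rank-one residue terms which must themselves be shown to vanish in trace norm, as the paper does for $\fL_1$ around \eqref{eq:piece1}. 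Your outline does not account for these terms.

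Second, the decay mechanism for the remainder is not what you describe: $\Re(u^2-v^2)$ is not sign-definite on any admissible fixed contours (near the real-axis crossings $\Re(w^2)>0$, far out along the $\pm\pi/3$ rays $\Re(w^2)<0$), so ``exponential decay from $e^{x(u^2-v^2)}$ away from the critical locus'' on a fixed deformed contour fails. What works, and what the paper does, is to recenter the contours at the critical points via $w\mapsto w+x$, $v\mapsto v-x$; the decay then comes from the superexponential perturbed-Airy bound \eqref{eq:airyGestimate} with argument shifted by $x^2$. After this recentering the rational factors become $\prod_k\frac{b_k+x+w}{b_k+x-v}$, which are of size $|x|^{\pm m}$ rather than uniformly bounded as claimed in your third step---they are harmless only because polynomial growth loses to superexponential decay, and precisely this is what \eqref{eq:airyGestimate} is designed to control. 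Finally, you should first conjugate by $e^{(z_1-z_2)x}$ (free at the level of the determinant): the trace-norm estimates you intend to borrow are for the conjugated kernels \eqref{eq:ker}, and without the conjugation the error pieces carry an off-diagonal factor $e^{|x|(z_1-z_2)}$ which at best complicates, and as written invalidates, the Hilbert--Schmidt factorization you would use to conclude.
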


\begin{prop}\label{prop:stat2}
Let $\big(Z^0_k(t)_{t\geq0}\big)_{k=1,\dotsc,m}$ be the system of RBMs introduced in Section \ref{sec:inilim} (with the first path being reflected off the origin).
Fix $T>0$ and define shifted processes $(\tilde Z^T_k(t))_{t\in\rr}$ by $\tilde Z^T_k(t)=Z^0(t+T)\uno{t+T\geq0}$.
Then the system $(\tilde Z^T_k)_{k=1,\dotsc,m}$ converges in distribution, uniformly on compact sets, to the stationary system of RBMs $(Z^\equ_k)_{k=1,\dotsc,m}$ introduced in Section \ref{sec:KPZfluct}.
\end{prop}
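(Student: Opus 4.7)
The strategy is a standard monotone coupling argument that exploits the negative drifts $-2b_k$, $b_k>0$, to give positive recurrence and a finite coupling time. Build on a single probability space the system $(Z^0_k(t))_{t\geq 0}$ started from the origin and a second system $(\bar Z_k(t))_{t\geq 0}$ started at time $0$ from an independent sample of the Harrison--Williams stationary distribution $\pi^{(\vec b)}$, with both systems driven by the \emph{same} underlying drifted Brownian motions $\beta_1,\dots,\beta_m$. By stationarity, $(\bar Z_k(t))_{t\geq 0}$ has the same law as $(Z^\equ_k(t))_{t\geq 0}$ as a process.

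First I would verify that the Skorokhod reflection formula provides a monotone coupling: inductively in $k$, the lower path inequality $Z^0_{k-1}(t)\leq \bar Z_{k-1}(t)$ and the initial ordering $0=Z^0_k(0)\leq \bar Z_k(0)$ propagate to $Z^0_k(t)\leq \bar Z_k(t)$ for all $t\geq 0$, and moreover once two such one-sidedly reflected paths meet, they coincide for all later times. Let $\tau_k$ be the first coupling time of $Z^0_k$ and $\bar Z_k$. The main step is to show $\tau_k<\infty$ a.s.\ by induction on $k$. The base case $k=1$ is classical: $Z^0_1$ and $\bar Z_1$ are both reflected Brownian motions with drift $-2b_1<0$ off the origin, so $\tau_1$ equals the first hitting time of $0$ by $\bar Z_1$, which is a.s.\ finite by positive recurrence. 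For the inductive step, once $t\geq\tau_{k-1}$ we have $Z^0_{k-1}(t)=\bar Z_{k-1}(t)=:W(t)$, and both $Z^0_k$ and $\bar Z_k$ evolve as RBMs with drift $-2b_k$ reflected off the common moving floor $W$. The gap $\bar Z_k-Z^0_k\geq 0$ is non-increasing on $[\tau_{k-1},\infty)$ (it is constant except when $Z^0_k$ touches $W$, where it strictly decreases), and it closes at the first time $\bar Z_k$ hits $W$, i.e.\ at $\tau_k$. Finiteness of this hitting time follows from the positive recurrence of the stationary gap process $\bar Z_k-\bar Z_{k-1}$, which under $\pi^{(\vec b)}$ a.s.\ takes the value $0$ infinitely often.

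With $\tau:=\max_k\tau_k<\infty$ a.s.\ in hand, the conclusion is immediate: fix a compact interval $[a,b]\subset\rr$; on the event $\{T+a>\tau\}$, whose probability tends to $1$ as $T\to\infty$, the paths $(Z^0_k(t+T))_{t\in[a,b],\,k\leq m}$ and $(\bar Z_k(t+T))_{t\in[a,b],\,k\leq m}$ are identical, and by stationarity the latter has the same distribution as $(Z^\equ_k(t))_{t\in[a,b],\,k\leq m}$. Uniform-on-compacts convergence in distribution follows at once.

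The main obstacle is the positive-recurrence step for $k\geq 2$: the argument that $\bar Z_k$ a.s.\ hits the reflecting lower path $\bar Z_{k-1}$ in finite time requires the Harris recurrence of the $m$-dimensional RBM, which is known from \cite{Harrison-Williams87} but may need to be invoked carefully (e.g.\ via the explicit description of $\pi^{(\vec b)}$ as a sum of exponentials \cite{Dieker-Moriarty09} when all the $b_k$'s are distinct, and by a continuity/limit argument to handle coincidences among the $b_k$'s).
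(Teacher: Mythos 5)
Your argument is correct in outline, but it takes a genuinely different route from the paper. The paper builds no coalescent coupling at all: it passes to the gap process $Y_k=Z^0_k-Z^0_{k-1}$, quotes uniqueness of its invariant law $\pi$ (Harrison--Williams) and the total variation convergence $\|\mathcal{L}_t-\pi\|_{\rm TV}\to0$ (Dupuis--Williams), and then upgrades this one-time statement to convergence of path segments on $[-L,L]$ by an abstract device: an optimal coupling of $\mathcal{L}_{T-L}$ with $\pi$ fed into a common measurable ``path-continuation'' map $G(y,U)$, which bounds the total variation distance of the restricted path laws by $\|\mathcal{L}_{T-L}-\pi\|_{\rm TV}$. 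Your synchronous-noise coupling replaces the citation of these ergodicity results by a hands-on coalescence argument, and it delivers the same strength of conclusion (on $\{T+a>\tau\}$ the two restricted paths are literally equal, so you also get total variation convergence on compacts); what it costs is the monotonicity and coalescence bookkeeping. Two remarks on that bookkeeping. First, your blanket claim that two one-sidedly reflected paths coincide for all later times once they meet is not justified while their floors still differ (the higher floor can push $\bar Z_k$ up again after a meeting); you only need, and only have, this after $\tau_{k-1}$, when the floors agree and the drivers differ by a nonnegative constant, so $\tau_k$ should be defined as the first meeting time after $\tau_{k-1}$ rather than the first meeting time overall. Second, the step you flag as the main obstacle does not require Harris recurrence or the Dieker--Moriarty density formula: in the stationary system, if $\bar Z_k$ never touched $\bar Z_{k-1}$ after some finite time it would thereafter evolve as a free Brownian motion with drift $-2b_k<0$ and tend to $-\infty$, contradicting $\bar Z_k\ge\bar Z_{k-1}\ge\dotsb\ge0$; equivalently, taking expectations in the Skorokhod decomposition of the stationary system shows the pushing local time at each face accrues at rate $2b_k>0$, so each gap vanishes at arbitrarily large times almost surely, which is exactly the recurrence you need for the inductive coalescence step.
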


We will actually prove the convergence in a stronger sense (total variation).

By shift invariance of the KPZ fixed point we have $\fh(t,x;\cz^0)\stackrel{\text{dist}}{=}\fh(t,0;\cz^0(x+\cdot))$.
The second result and \eqref{eq:czdef} imply that the shifted process $\cz^0(x+\cdot)$ converges in distribution in $\UC$ as $x\to-\infty$ to the (reversed) process $\varrho_0\cz^{(\vec b)}_\equ$ (recall $\varrho_0f(x)=f(-x)$.
By continuity of the KPZ fixed point transition probabilities with respect to the initial data (in $\UC$, see \cite[Thm. 4.1]{Matetski-Quastel-Remenik17}), we deduce that
\[\fh(t,x;\cz^0)\xrightarrow[x\to-\infty]{}\fh\big(t,0;\varrho_0\cz^{(\vec b)}_\equ\big)\]
in distribution.
But the KPZ fixed point is reflection invariant, $\fh(t,x;\varrho_0\fh_0)\stackrel{\text{dist}}{=}\fh(t,-x,\fh_0)$ (see \cite[Thm. 4.5]{Matetski-Quastel-Remenik17}), so this together with \eqref{eq:Z0form} and Proposition \ref{prop:stat1} yields
\begin{equation}\label{eq:Zequform}
\pp\!\left(\fh(t,0;\cz^{(\vec b)}_\equ)\leq r\right)=\det\!\left(\fI-\P_0\tfB^{(\infty,\vec b)}_{t,2^{2/3}r}\P_0\right).
\end{equation}
Setting $t=1$, and in view of Theorem \ref{thm:DetFormula}, this yields Theorem \ref{thm:KPZfp}.

We turn then to the proof of the two propositions.

\begin{proof}[Proof of Proposition \ref{prop:stat1}]
In order to prove convergence of the Fredholm determinant on the right hand side of \eqref{eq:Z0form} we need to show that the kernel inside it converges in trace norm in $L^2([r,\infty))$.
The arguments which achieve this are relatively standard, so we will skip some details.
For simplicity we set $t=1$; the general case follows by scaling.

We may multiply the kernel by $e^{(z_1-z_2)x}$ without changing the value the Fredholm determinant, since this is just a conjugation.
The resulting kernel then equals $\fL_1+\fL_2$ with
\begin{equation}\label{eq:ker}
\fL_1(z_1,z_2)=e^{(z_1-z_2)x}(\fT^{\vec b,+}_{1,-x})^*\bar\P_0\fT^{\vec b,-}_{1,x}(z_1,z_2),\quad
\fL_2(z_1,z_2)=e^{(z_1-z_2)x}(\fT^{\vec b,+}_{1,-x})^*\bar\P_0\varrho_0\fT^{\vec b,-}_{1,x}(z_1,z_2).
\end{equation}
When $m=0$, $\fL_1+\fL_2$ is the kernel whose Fredholm determinant computes the one-point distribution (at $x$) of the Airy$_{2\to1}$ process, see \cite[Sec. 4.4]{Matetski-Quastel-Remenik17}.
In that case it is known that, as $x\to-\infty$, and in trace norm in $L^2([r,\infty))$, $\fL_1$ goes to $0$ and $\fL_2$ goes to $\fB_{2^{2/3}r}$ (see \cite{Borodin-Ferrari-Sasamoto08a,Quastel-Remenik13}).
In order to generalize this to our case we need to handle the rational perturbations in our kernels.
The proof for the Airy$_{2\to1}$ case is based on the bound $|\!\Ai(z)|\leq C\tts e^{-\frac23(z\vee0)^{3/2}}$ for the Airy function.
The key to the extension is the following estimate: for $z>0$ and any $b_1,\dotsc,b_m\in\rr$, if $\langle'$ denotes the usual Airy contour $\langle$ but shifted so that it crosses the real axis at $\sqrt{z}$, we have
\begin{equation}\label{eq:airyGestimate}
\left|\frac1{2\pi\I}\int_{\langle'}dw\,e^{w^3/3-zw}\prod_{k=1}^m(b_k\pm w)^{\pm 1}\right|
\leq C\tts e^{-\frac23z^{3/2}}\prod_{k=1}^m(b_k+\sqrt{z})^{\pm 1}
\end{equation}
for some $C>0$; this can be proved using Laplace's method and the method of steepest descent in exactly the same way as in the classical estimate for the Airy function (see e.g. \cite{Stein-Shakarchi03}), which corresponds to $m=0$.

Consider first $\fL_1$.
We want to prove that this operator goes to $0$ in trace norm (in $L^2([r,\infty))$) as $x\to-\infty$.
We have
\begin{equation}
\fL_1(z_1,z_2)
=\frac1{(2\pi\I)^2}\int_{-\infty}^0d\eta\int_{\langle}\ts dw\int_{\langle}\ts dv\,e^{\frac{1}{3}w^3-x w^2-(z_1-\eta)w+\frac{1}{3}v^3+x v^2-(z_2-\eta)v+(z_1-z_2)x}\prod_{k=1}^m\frac{b_k+w}{b_k-v}.\label{eq:piece1}
\end{equation}
The $v$ contour passes to the left of all $b_i$'s (the $w$ contour does not really have a restriction since the integrand is analytic in $w$), but in taking $x\to-\infty$ it will be convenient to have it lie to the right of these points.
So we shift the contour in this way, collecting the residues coming from the poles in the rational factor in the integrand.
If all $b_i$'s are different then we pick up $m$ residues, the $i$-th one being (here we choose $\langle$ so that $\Re(w+b_i)>0$ for each $i$)
\begin{multline}
\frac1{2\pi\I}\int_{-\infty}^0d\eta\int_{\langle}\ts dw\,e^{\frac{1}{3}w^3-x w^2-(z_1-\eta)w+\frac{1}{3}b_i^3+xb_i^2-(z_2-\eta)b_i+(z_1-z_2)x}\frac{\prod_k(b_k+w)}{\prod_{k\neq i}(b_k-b_i)}\\
=\frac1{2\pi\I}\int_{\langle}\ts dw\,e^{\frac{1}{3}w^3-x w^2-z_1w+\frac{1}{3}b_i^3+xb_i^2-z_2b_i+(z_1-z_2)x}\prod_{k\neq i}\frac{b_k+w}{b_k-b_i},
\end{multline}
and now changing variables $w\longmapsto w+x$ we get
\[\frac{e^{-\frac23x^3+\frac{1}{3}b_i^3+xb_i^2-z_2(b_i+x)}}{2\pi\I}\int_{\langle}\ts dw\,e^{\frac{1}{3}w^3-(z_1+x^2)w}\prod_{k\neq i}\frac{b_k+w+x}{b_k-b_i}
.\]
This is a rank-1 kernel, and thus its trace norm is just the product of the $L^2$ norms in $z_1$ and $z_2$ (over $[r,\infty)$), which goes to $0$ as $x\to-\infty$ thanks to \eqref{eq:airyGestimate} (note that $\langle$ can be shifted to cross the real axis at $\sqrt{z_1+x^2}$ as needed without trouble, and that the prefactor $e^{-2x^3/3}$ gets canceled precisely by the asymptotics coming from \eqref{eq:airyGestimate}).
If some of the $b_i$'s coincide then we have higher order poles, but the residues look the same as the ones we just analyzed except with an extra polynomial in the $b_i$'s, $x$ and $z_2$ as a prefactor, which does not affect the argument.
We are thus left with estimating the right hand side of \eqref{eq:piece1} after having moved the $v$ contour.
Changing variables $w\longmapsto w+x$, $v\longmapsto v-x$, we get
\begin{equation}
\frac1{(2\pi\I)^2}\int_{-\infty}^0d\eta\int_{\langle}\ts dw\int_{\langle}\ts dv\,e^{\frac{1}{3}w^3-(z_1-\eta+x^2)w+\frac{1}{3}v^3-(z_2-\eta+x^2)v}\prod_{k=1}^m\frac{b_k+x+w}{b_k+x-v},
\end{equation}
where the $v$ contour now passes to the right of $b_i+x$ for all $i$, and can thus remain fixed as $x\to-\infty$.
We express this as the product of two kernels,
\begin{align}
\fR_1(z_1,\eta)&=\frac1{2\pi\I}\int_{\langle}\ts dw\,e^{\frac{1}{3}w^3-(z_1-\eta+x^2)w}\prod_{k=1}^m(b_k+x+w)\uno{\eta\leq0},\\
\fR_2(\eta,z_2)&=\frac1{2\pi\I}\int_{\langle}\ts dv\,e^{\frac{1}{3}v^3-(z_2-\eta+x^2)v}\prod_{k=1}^m\frac{1}{b_k+x-v}\uno{\eta\leq0},
\end{align}
so that the trace norm which we are interested is bounded by the product of Hilbert-Schmidt norms $\|\P_r\fR_1\|_2\|\fR_2\P_r\|_2$; both norms can be estimated using \eqref{eq:airyGestimate} for large enough $x$, yielding a bound which decays like $e^{-c\tts|x|^{3/2}}$ times a polynomial in $x$.

We have shown then that $\P_r\fL_1\P_r$ goes to $0$ in trace norm as $x\to-\infty$.
Consider now the kernel $\fL_2$, which we decompose as $\fL_{2,1}-\fL_{2,2}$ with
\begin{equation}\label{eq:s3}
\fL_{2,1}(z_1,z_2)=e^{(z_1-z_2)x}(\fT^{\vec b,+}_{1,-x})^*\varrho_0\tts\fT^{\vec b,-}_{1,x}(z_1,z_2),\quad
\fL_{2,2}(z_1,z_2)=e^{(z_1-z_2)x}(\fT^{\vec b,+}_{1,-x})^*\P_0\varrho_0\tts\fT^{\vec b,-}_{1,x}(z_1,z_2).
\end{equation}
In the case $m=0$ the term $\fL_{2,2}$ goes to $0$ in trace class in $L^2([r,\infty))$; although it is a bit more complicated, the argument can be adapted to handle the rational perturbations in the case $m\geq1$ in a similar way as we did above for $\fL_1$ (see the comment after \cite[Eqn. (2.16)]{Quastel-Remenik13}).
We are then only left with $\fL_{2,1}$, which equals
\[\frac1{(2\pi\I)^2}\int_{-\infty}^\infty d\eta\int_{\langle}\ts dw\int_{\langle}\ts dv\,e^{\frac{1}{3}w^3-x w^2-(z_1-\eta)w+\frac{1}{3}v^3+x v^2-(z_2+\eta)v+(z_1-z_2)x}\prod_{k=1}^m\frac{b_k+w}{b_k-v}.\]
Recall that the $v$ contour lies to the left of all $b_i$'s, but the $w$ contour is free.
Then, proceeding similarly to \eqref{eq:wtB1}--\eqref{eq:wtB2}, we may deform both contours  to $\I\rr-c$ for some large $c$, so that the $\eta$ integral yields $\delta_0(u-v)$ and, deforming back the contour, we get
\[\fL_1(z_1,z_2)=\frac1{2\pi\I}\int_{-\infty}^\infty d\eta\int_{\langle}\ts dw\,e^{\frac{2}{3}w^3-(z_1+z_2)w+(z_1-z_2)x}\prod_{k=1}^m\frac{b_k+w}{b_k-w}.\]
Note that this kernel does not depend on $x$ except for the conjugation $e^{(z_1-z_2)x}$; removing it and shifting variables $z_1\longmapsto z_1+r$, $z_2\longmapsto z_2+r$ in the Fredholm determinant yields the result.
\end{proof}

\begin{proof}[Proof of Proposition \ref{prop:stat2}]
Consider the gap process $Y_k=Z^0_k-Z^0_{k-1}$ associated to our system of RBMs $(Z^0_k)_{k=1,\dotsc,m}$.
From \cite{Harrison-Williams87} (see also \cite{Williams95}) it is known that $Y=(Y_1,\dotsc,Y_m)$, has a unique invariant distribution, call it $\pi$.
Moreover, if ${\cal L}_t$ denotes the law of $Y(t)$ then one has
\[\big\|{\cal L}_t-\pi\big\|_{\rm TV}\xrightarrow[t\to\infty]{}0\]
where $\|\cdot\|_{\rm TV}$ denotes the total variation norm, see \cite{Dupuis-Williams94} (even stronger results are proved in \cite{Budhiraja-Lee07,Sarantsev17a}).
The rest of the proof follows from abstract arguments, which we describe next.

For $T>0$ define the shifted gap process $(Y^T(t))_{t\in\rr}$ as $Y^T(t)=Y(t+T)\uno{t+T\geq0}$, and define also $Y^\equ$ to be the double-sided stationary version of $Y$ (with $\pi$ as its marginal).
We will prove that, for fixed $L>0$, $Y^T\longrightarrow Y^\equ$ in $[-L,L]$ as $T\to\infty$ in distribution in total variation distance as random variables in $\mathcal{C}([-L,L],\rr^m)$, the space of continuous $m$-dimensional paths on $[-L,L]$ with the supremum norm.
To this end we may define a measurable function $G$ from $\rr\times[0,1]$ to $\mathcal{C}([-L,L],\rr^m)$ so that if $U$ is a uniform random variable on $[0,1]$ then $G(y,U)$ has the distribution of the gap process $Y$ on $[0,2L]$, started from $y$ (see e.g. \cite[Lem. 3.22]{Kallenberg02}).
Consider, on the other hand, the optimal coupling between ${\cal L}_T$ and $\pi$ (which realizes their total variation distance) and use it to draw a random pair $(\Gamma_1,\Gamma_2)$.
Define, additionally, a uniform random variable $U$ on $[0,1]$, which is also independent of $(\Gamma_1,\Gamma_2)$, and let $Y_{(i)}=G(\Gamma_i,U)$.
Then $Y_{(1)}\stackrel{\text{dist}}{=}Y^T$ and $Y_{(2)}\stackrel{\text{dist}}{=}Y^\equ$ (on $[-L,L]$) and the total variation distance between the laws of $Y_{(1)}$ and $Y_{(2)}$ on $[-L,L]$ is bounded by $\|{\cal L}_{T-L}-\pi\|_{\rm TV}$, which goes to $0$ as $T\to\infty$.

From this we get directly the convergence of $\tilde Z^T$ to $Z^\equ$ on $[-L,L]$ in total variation norm, which implies the result.
\end{proof}

\subsection{Proof of the limit transitions}\label{sec:limittrans}

Our goal here is to prove \eqref{eq:limits1} and \eqref{eq:limits2}.
By \eqref{eq:absymm} it is enough to consider the case where all $a_i$'s equal $\infty$.

Consider first the second limit.
It is easy to couple $\cz^{(\vec b)}_\equ$ (with general $m$) and $\cz^{(b_1)}_\equ$ (with $m=1$), e.g. using the Skorokhod construction, in such a way that $\cz^{(b_1)}_\equ(x)\leq\cz^{(\vec b)}_\equ(x)$ for all $x$ (after all, $\cz^{(b_1)}_\equ$ is essentially the lower path in the stationary system with drifts $\vec b$).
This implies by \cite[Prop. 4.5]{Matetski-Quastel-Remenik17} that $\fh(1,x;\cz^{(b_1)}_\equ)$ is stochastically dominated by $\fh(1,x;\cz^{(\vec b)}_\equ)$, and thus the statement for $m=1$, $\cz^{(b_1)}_\equ\longrightarrow\infty$ in distribution as $b_1\to0$, yields \eqref{eq:limits2}.

Consider next \eqref{eq:limits1}.
We can construct the stationary system $Z^\equ$ on any interval $[-L,L]$ using the Skorokhod construction with initial data $Z^\equ(-L)$ distributed according to the stationary distribution of the system.
Recall that $Z^\equ_1(-L)$ is an exponential random variable with parameter $-2b_1$, and it thus converges to $0$ in distribution as $b_1\to\infty$.
Using the recursive construction one checks now that the remaining components of the initial data $Z^\equ(-L)$ also go to $0$ in distribution.
In the same way, from the Skorokhod construction it is easy to show that the lower path $Z^\equ_{1}$ goes to $0$ in distribution in $[-L,L]$ as $b_1\to\infty$, and then recursively that each path $Z^\equ_k$, $k=1,\dotsm,m$ satisfies the same as all drifts go to $\infty$.
This shows that $\cz^{(\vec b)}_\equ$ goes to $0$ in distribution, uniformly on compact sets, when $b_1,\dotsc,b_m\to\infty$.
Since the KPZ fixed point transition probabilities are continuous in the initial data under this convergence, we get $\fh(1,0;\cz^{(\vec b)}_\equ)\longrightarrow\fh(1,0;0)$ in distribution as all drifts go to infinity, and the result follows since $\pp(\fh(1,0;0)\leq r)=F_1(2^{2/3}r)$.

\appendix

\section{Brownian Excursions and discrete orthogonal polynomials}\label{BEandOP}

In this appendix we sketch an alternative proof of Theorem \ref{thm:PII} which leads rather directly to \eqref{eq:PII1} in the cases $m=1,2$, but becomes increasingly difficult for larger values of $m$.  The model we consider is as follows. We consider $N$ Brownian excursions $X_1(t), \dots, X_N(t)$, i.e., Brownian bridges on $\rr_+$ with an absorbing wall at zero, which are conditioned to begin at position 0 at time $t=0$. We also condition on the particles ending points at time $t=1$ to be $X_1(1) = X_{2}(1) = \dots = X_{N-m}(1) = 0$, and $X_N(1) = \beta_1, \dots, X_{N-m+1}(1)= \beta_m$, where $\beta_1\ge \beta_2 \ge \dots \ge \beta_m\ge 0$. The particles are conditioned not to intersect at times $0<t<1$, and are ordered as $X_1(t) <X_2 (t) < \dots < X_N(t)$. Introduce the notation
\[
\mcal_N := \max_{t\in (0,1)} X_N(t)
\]
and consider the limit
\eq
  \lim_{N\to\infty}P(\mcal_N < h )
  \eeq
with the scalings
 \begin{equation}\label{scaling}
 \beta_j=\sqrt{2N}-(2N)^{1/6} b_j \quad \textrm{for} \quad 1\le j \le r ,\qquad h=\sqrt{2N}+\frac{r}{2(2N)^{1/6}}, \qquad r\in \rr, \quad b_j> 0.
 \end{equation}
    
For the case $m=0$, i.e. for non-intersecting Brownian excursions without outliers, a similar scaling limit was proven in \cite{Liechty12} using the asymptotic analysis of a system discrete orthogonal polynomials via the Riemann--Hilbert method. Here we use the same approach, although the presence of outliers makes the analysis somewhat more involved.

Using the Karlin--McGregor formula for non-intersecting paths \cite{Karlin-McGregor59} along with the formulas relating expected values of products of characteristic polynomials in random matrix models to orthogonal polynomials, see \cite[Thm. 2.3]{Baik-Deift-Strahov03},  \cite[Eqn. (14)]{Brezin-Hikami00}, we arrive at the following exact formula for the distribution function of $\mcal_N$:
\begin{equation}\label{r3b}
\begin{aligned}
P(\mcal_N < h )
&= \frac{ (-1)^{m(N-m)} 2^m \pi^{N^2+(N-m)^2+N/2}}{2^{N/2} h^{N^2+(N-m)^2+N}\De_m({\bf \beta}^2)} \prod_{j=1}^m \frac{e^{\frac{1}{2}\beta_j^2}(2N-2j+1)!}{\beta_j^{2(N-m)+1}}\prod_{j=0}^{N-1} \frac{\tilde{h}_{2j+1}}{(2j+1)!}  \\
&\qquad\times
 \det\!\left[ \sum_{x_k=1}^\infty  \sin\left(\frac{x_k \pi \beta_k}{h}\right)  e^{-\frac{\pi^2}{2h^2} x_k^2}\frac{P_{2(N-m+j)+1}(x_{k})}{\tilde{h}_{2(N-m+j)-1}}\right]_{j,k=1}^m, \\
\end{aligned}
\end{equation}
where $\De(\beta^2)= \prod_{1\le j<k\le m}(\beta_k^2-\beta_j^2)$ and where $P_k(x)$ are the monic polynomials of degree $k$, defined via the orthogonality condition
\begin{equation}\label{def:OPs}
\sum_{x=-\infty}^\infty P_k(x)P_j(x) e^{-\frac{\pi^2}{2h^2} x2} = {h}_k \de_{jk}.
\end{equation}
With the scalings \eqref{scaling}, in which case it is known that
\begin{equation}
 \lim_{N\to\infty} \frac{\pi^{2N^2+N/2}}{2^{N/2} h^{N(2N+1)}}\prod_{j=0}^{N-1} \frac{{h}_{2j+1}}{(2j+1)!} = F_1(2^{2/3}r)
\end{equation}
(see \cite[Eqn. (1.20) and Thm. 1.1]{Liechty12}), we get from \eqref{r3b} that, for large $N$,
\begin{align}\label{r3c}
P(\mcal_N < h )  &\sim (-1)^{m(N-m)+1}\frac{F_1(2^{2/3} r)}{\De_m({\bf \beta}^2)}\left(\frac{h}{\pi}\right)^{m(2N-m)} \\ &\times \det\left[\frac{e^{\frac{1}{2}\beta_k^2}(2N-2j+1)!}{\beta_k^{2(N-m)+1}}  \sum_{x_k=-\infty}^\infty  \sin\left(\frac{x_k \pi \beta_k}{h}\right)  e^{-\frac{\pi^2}{2h^2} x_k^2}\frac{P_{2N-2j+1}(x_{k})}{{h}_{2N-2j+1}}\right]_{j,k=1}^m. \\
\end{align}
The sums in the determinant of \eqref{r3c} can then be computed entrywise as $N\to\infty$. To do so one can use the asymptotic formulas for the orthogonal polynomials which follow from the Riemann--Hilbert analysis in \cite{Liechty12} (see also \cite{Buckingham-Liechty19,Liechty-Wang17}), rewrite the sum as a contour integral, and perform classical steepest descent analysis following the approach of \cite[Sec. 5.2]{Liechty-Wang17}. Since each row of the matrix in \eqref{r3c} has the same leading-order behavior as $N\to\infty$, one needs to use the subleading terms which are the difference between two rows to compute the limit. For $m=1,2$, this leads to the RHS of \eqref{eq:PII1}. For $m>2$ a similar approach would work, but would require  knowledge of more terms in the asymptotic expansion of the orthogonal polynomials \eqref{def:OPs} than we were willing to compute. If there are $m$ wanderers, the computation would require knowledge of $m-1$ subleading terms in the large $N$ asymptotic expansion of the the orthogonal polynomials. For any fixed $m$, this is doable by the Riemann--Hilbert method, but becomes increasingly difficult as $m$ becomes larger.

We also remark that the choice to use Brownian excursions rather than standard Brownian bridges like \cite{Adler-Ferrari-van_Moerbeke10} is only to match the similar analysis in \cite{Liechty12}. We could have taken a similar approach starting from non-intersecting Brownian bridges with wanderers. In this case the relevant orthogonal polynomials are continuous ones defined on a half-line $(-\infty ,z_0)$, and the critical phenomenon occurs when the edge of the oscillatory region for the orthogonal polynomials is close to the ``hard edge" at $z_0$. This phenomenon has been studied in several places, and is also related to the Painlev\'{e} II (and the Painlev\'{e} XXXIV) equation \cite{Claeys-Kuijlaars-vanLessen08, Its-Kuijlaars-Ostensson08, Nadal-Majumdar11,Perret-Schehr14, Xu-Zhao19}.

\vskip12pt
\paragraph*{Acknowledgments.}

The authors thank Mauricio Duarte for background and references on RBMs and Joaqu\'in Fontbona for helping us with the abstract argument which proves the convergence to a stationary process for the system of RBMs with drift. 

\noindent Throughout this project we made extensive use of Folkmar Bornemann's MATLAB package for numerical computation of Fredholm determinants \cite{Bornemann10a}, as well as the Mathematica package \texttt{RHPackage} by Sheehan Olver for numerical solutions to Riemann--Hilbert problems \cite{Olver12}. 

\noindent KL was supported by a Simons Foundation Collaboration Grant \#357872. GBN was supported by the Swedish Research Council Grant 67465 VR20BN. DR was supported by CMM ANID Grant AFB170001, by Programa Iniciativa Cient\'ifica Milenio grant number NC120062 through Nucleus Millenium Stochastic Models of Complex and Disordered Systems, and by Fondecyt Grant 1201914.

\bibliographystyle{plain}
\bibliography{bibliography.bib}
\addcontentsline{toc}{section}{References}

\end{document}